\pdfoutput=1

\documentclass[11pt]{amsart}

\usepackage{amsmath,  amssymb}

\usepackage{amsopn}

\usepackage[T1]{fontenc}
\usepackage[latin1]{inputenc}
\usepackage[matrix,arrow]{xy}
\usepackage{graphicx}
\usepackage{scalerel}
\usepackage{combelow} 

\usepackage{enumitem} 
\setlist[0]{itemsep=5pt}

\usepackage{cite}  
\usepackage[pdftex]{hyperref}

\numberwithin{equation}{section}

\numberwithin{subsection}{section}

\expandafter\let\csname ver@amsthm.sty\endcsname\relax
\let\theoremstyle\relax
\let\newtheoremstyle\relax

\let\qedhere\relax

\usepackage{amsthm}
\usepackage[capitalize,nameinlink]{cleveref}

\makeatletter
\renewcommand{\thm@space@setup}{
  \thm@preskip=.5\baselineskip\@plus.2\baselineskip \@minus.2\baselineskip
  \thm@postskip=\thm@preskip
}
\newtheoremstyle{amsplain}
{\thm@preskip}
{\thm@postskip}
{\itshape}
{\parindent}
{\scshape}
{.}
{ }
{}
\makeatother

\newcommand\transpose{\intercal}

\usepackage{pgffor}
\foreach \x in {C,D,E,N,Q,Z,D,R,T,W}{\expandafter\xdef\csname\x\x\endcsname{\noexpand\mathbb{\x}}}

\foreach \x in {A,B,C,D,F,I,J,K,S,T,V,X,Y,Z}{\expandafter\xdef\csname c\x\endcsname{\noexpand\mathcal{\x}}}

\foreach \x in {A,B,D,F,K,S,T,X}{\expandafter\xdef\csname m\x\endcsname{\noexpand\mathfrak{\x}}}

\newcommand\style{\mathcal }          
\newcommand\oss[1]{{\style O{}\style S}(#1)}
\newcommand\osss{{\style S}}

\newcommand\abajo{\\[.2cm]}
\newcommand\id{\text{id}}

\DeclareMathOperator{\re}{Re} 

\DeclareMathOperator{\spann}{span} \DeclareMathOperator{\tr}{Tr}

\DeclareMathOperator{\dist}{\text{dist}\,}

%
\theoremstyle{amsplain} 
\newtheorem{theorem}{Theorem}[section]
\newtheorem{lemma}[theorem]{Lemma}
\newtheorem{corollary}[theorem]{Corollary}
\newtheorem{proposition}[theorem]{Proposition}

\theoremstyle{definition}
\newtheorem{definition}[theorem]{Definition}
\newtheorem{remark}[theorem]{Remark}

\begin{document}

\allowdisplaybreaks

\title{The Matricial Range of $E_{21}$}
\author{Martín Argerami}
\address{Department of Mathematics and Statistics \\ University of Regina\\ 3737 Wascana Pwy\\ Regina, SK S4S0A2\\ Canada}
\email{argerami@uregina.ca}
\date{}

\begin{abstract}
The matricial range of the $2\times2$ matrix $E_{21}$ (i.e., the $2\times 2$ unilateral shift) is described very simply: it consists of all matrices with numerical radius at most $1/2$. The known proofs of this simple statement, however, are far from trivial and they depend on subtle results on dilations. We offer here a brief introduction to the matricial range and a recap of those two proofs, following independent work of Arveson and Ando in the early 1970s.
\end{abstract}

\maketitle

\setcounter{tocdepth}{1}

\tableofcontents

\section{Introduction}

In 1969, W. Arveson published a striking paper in Acta Mathematica, called \emph{Subalgebras of C$^*$-algebras} \cite{arveson1969}. In this paper he developed a non-commutative analog of the Choquet theory for function spaces.
His paper is a wonderful mixture of technical prowess and deep thinking about how to rightly generalize certain ideas about function spaces to the non-commutative setting. A key feature of his paper was the use of complete positivity as a non-commutative replacement for the role that positivity has in the commutative case.

A few years later he published an equally remarkable paper \cite{arveson1972}. Besides containing his essential Boundary Theorem, this paper defined the \emph{matricial range} of an operator. As a consequence of an analysis of nilpotent dilations, he was able to explicitly characterize the matricial range of the $2\times 2$ matrix unit $E_{21}$. This is one of the very few non-trivial (that is, non-normal) cases where the matricial range has been determined (the other significant one is the unilateral shift on an infinite-dimensional Hilbert space, which is more or less straightforward).

Almost concurrently, Ando published his results characterizing the numerical range \cite{ando1973}. As a direct byproduct of his results one recovers the characterization of the matricial range of $E_{21}$.

The goal of this article is to describe Arveson and Ando's techniques, together with basic characterizations of the matricial range. The results we offer follow closely the originals, but several of the proofs are new. In the case of Ando, we have also strived to fill in the details from his very condensed arguments.
\section{Preliminaries}

Throughout, $H$ will be a Hilbert space, with inner product ${\langle\cdot,\cdot\rangle}$. We use $B(H)$ to denote the (C$^*$, von Neumann) algebra of bounded operators on $H$; and $K(H)$ for the compact operators. When $\dim H=n$, we canonically identify $H$ with ${\CC^n}$ and ${B(H)}$ with $M_n(\CC)$, the set of $n\times n$ complex matrices. This is done by fixing an orthonormal basis $\{\xi_j\}\subset H$ and considering the rank-one operators
\[
E_{kj}\xi=\langle \xi,\xi_j\rangle\,\xi_k,\ \ \ \ \xi\in H.
\]
These are called \emph{matrix units} and they are characterized up to unitary equivalence (i.e., choice of the orthonormal basis) by the relations
\begin{equation}\label{equation: matrix units}
{E_{kj}}E_{h\ell}=\delta_{jh}\,E_{k\ell},\ \ \ \ E_{kj}^*=E_{jk},\
 \ \ \ \sum_{k=1}^nE_{kk}=I.
\end{equation}
We write $\TT=\{z\in\CC:\ |z|=1\}$, and $\DD=\{z\in\CC:\ |z|<1\}$ for the unit circle and unit disk respectively.
When needed for clarity, we will write ${E_{kj}^n}$ to emphasize that $E_{kj}^n\in M_n(\CC)$. Of particular importance will be, for each $n$, the {unilateral shift}:\[\label{definition: shift}
{S_2}=E_{21}=\begin{bmatrix}0&0\\1&0\end{bmatrix},\ \ {S_n}=\sum_{k=1}^{n-1}E_{k+1,k}=\begin{bmatrix}0&0&0&\cdots&0\\ 1&0&0&\cdots&0\\ 0&1&0&\cdots&0\\
&&&\ddots\\
0&\cdots&0&1&0
\end{bmatrix}.
\]
In the infinite-dimensional case, when $\{\xi_j\}_{j\in\NN}$ is an orthonormal basis of $H$, the associated shift operator $S$ is the linear operator induced by $S:\xi_j\longmapsto\xi_{j+1}$.

An element $x$ of a normed space is said to be \emph{contractive} if $\|x\|\leq1$. A linear map $\phi:\cA\to \cB$ between two C$^*$-algebras is \emph{positive} if it maps positive elements to positive elements; it is \emph{completely positive} if $\phi^{(n)}$ is positive for all $n\in\NN$, with $\phi^{(n)}$  the $n^{\rm th}$ amplification $\phi^{(n)}:M_n(B(H))\to M_n(B(K))$, given by $\phi^{(n)}(A)=[\phi(A_{kj})]_{kj}$. We will mostly consider completely positive maps which are also unital; these are commonly named  \emph{ucp} (unital, completely positive).
The basics of completely positive and completely contractive maps are covered in many texts. We refer the reader to the following canonical three: \cite{Blecher--LeMerdy-book,Paulsen-book,Pisier-book}. We mention one explicit result that we will use:

\begin{proposition}[Choi]\label{proposition: choi characteriztation for the paper}
Let $\phi:M_n(\CC)\to B(H)$ be a linear map. Then $\phi$ is completely positive if and only if

\[
\phi^{(n)}\left(\begin{bmatrix} E_{11}&\cdots&E_{1n} \\ \vdots&\ddots&\vdots\\ E_{n1}&\cdots&E_{nn}\end{bmatrix}\right)\geq0.
\]
\end{proposition}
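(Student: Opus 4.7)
The plan is to leverage the fact that the hypothesis is precisely positivity of the \emph{Choi matrix} $[\phi(E_{ij})]$ of $\phi$.

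The forward direction is a one-liner. Identifying $M_n(M_n(\CC))$ with $B(\CC^n \otimes \CC^n)$, the matrix $[E_{ij}]_{ij}$ equals $n$ times the rank-one projection onto the span of $\zeta := \frac{1}{\sqrt n}\sum_i e_i \otimes e_i$; in particular it is positive, so its image under the positive map $\phi^{(n)}$ is positive as well.

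For the converse, I assume $[\phi(E_{ij})] \geq 0$ in $M_n(B(H))$ and take its positive square root $V = [V_{ki}] \in M_n(B(H))$, so that reading off entries gives
\[
\phi(E_{ij}) \;=\; \sum_{k=1}^n V_{ki}^{*} V_{kj}.
\]
For each $k$, define $W_k : H \to \CC^n \otimes H$ by $W_k \xi = \sum_j e_j \otimes V_{kj}\xi$. A short index computation then yields
\[
W_k^{*}(A \otimes I_H) W_k \;=\; \sum_{i,j} a_{ij}\, V_{ki}^{*} V_{kj}, \qquad A = [a_{ij}] \in M_n(\CC),
\]
and summing over $k$ recovers $\phi(A) = \sum_{k} W_k^{*}(A \otimes I_H) W_k$ on all of $M_n(\CC)$. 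In this Kraus form $\phi$ is manifestly completely positive, being a sum of compositions of the $*$-homomorphism $A \mapsto A \otimes I_H$ with the completely positive maps $X \mapsto W_k^{*} X W_k$.

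The only delicate point is the index bookkeeping that promotes the entrywise identity $\phi(E_{ij}) = \sum_k V_{ki}^{*} V_{kj}$ to the clean Kraus formula for $\phi$ on all of $M_n(\CC)$; this is the small computation displayed above. Once that is in hand, complete positivity falls out automatically from the structure of the Kraus form, since each summand $W_k^{*}(\,\cdot\, \otimes I_H) W_k$ is plainly completely positive.
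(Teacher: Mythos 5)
Your proof is correct. Note that the paper itself states this proposition without proof, simply quoting it from the standard references (Paulsen et al.), so there is no in-paper argument to compare against; what you give is the standard Choi construction -- positivity of $[E_{ij}]_{ij}$ as $n$ times a rank-one projection for the easy direction, and extraction of a Kraus form $\phi(A)=\sum_k W_k^*(A\otimes I_H)W_k$ from the square root of the Choi matrix for the converse -- and both the entrywise identity $\phi(E_{ij})=\sum_k V_{ki}^*V_{kj}$ and the index computation promoting it to all of $M_n(\CC)$ check out.
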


The \emph{operator system} generated by $T\in B(H)$ is the space $\oss{T}=\spann\{I,T,T^*\}$. More generally, an operator system is a unital selfadjoint subspace $\cS$ of $B(H)$. When one considers ucp maps as morphisms, an operator system $\cS$ can be characterized by its sequence of positive cones $M_n(\cS)_+$. Arveson's Extension Theorem \cite[Theorem 1.2.3]{arveson1969} guarantees that if $\cS\subset B(H)$ is an operator system and  $\phi:\cS\to B(K)$ is completely positive,  there exists a completely positive extension $\tilde\phi:B(H)\to B(K)$. For any fixed operator system $\cS$, the set of ucp maps $\cS\to B(K)$ is BW-compact, where the BW-topology is that given by pointwise weak-operator convergence.
Given $T\in B(H)$, its \emph{numerical range} is the set

\[
\WW_1(T)=\{f(T):\ f \text{ is a state}\}=\{\tr(HT):\ H\geq0,\ \ \tr(H)=1\}.
\]
We note that the equality above is not entirely obvious, since the right-hand-side only accounts for the normal states. But since the normal states are the predual of $B(H)$, any state is a weak$^*$ (that is, pointwise) limit of normal states; so, as $\WW_1(T)$ is closed---by an easy application of Banach--Alaoglu---,  the set of all $f(T)$ where $f$ runs over all the states, is the same as the set of all $f(T)$ where $f$ runs over all the normal states. 

The numerical range is always compact and convex.
The \emph{numerical radius} of $T$ is the number
\[
w(T)=\sup\{|\lambda|:\ \lambda\in \WW_1(T)\}.
\]

\begin{remark}
The numerical range is classically defined as

\[
W(T)=\{\langle T\xi,\xi\rangle:\ \xi\in H\}.
\]
It turns out that $W(T)$ is always dense in $\WW_1(T)$. It is also convex, as proven in the Toeplitz--Hausdorff Theorem. The fact that $\WW_1(T)$ is convex, on the other hand, follows from a straightforward computation.

\end{remark}

As it is common---although not standard---we will refer by ``strong'' convergence of a net, to convergence in the strong operator topology; and by ``weak'' convergence, to convergence in the weak operator topology. 
\section{The Matricial Range}\label{section: matricial range}

The \emph{matricial range} of $T\in B(H)$ is the sequence
\[
{\WW(T)}=\{\WW_n(T): n\in\NN\},
\]
where

\[
{\WW_n(T)}=\{\varphi(T):\ \varphi:\oss{T}\to M_n(\CC)\ \mbox{ is ucp}\}.
\]
In light of Arveson's Extension Theorem, the matricial range of $T$ does not change if we consider $C^*(T)$ or even $B(H)$ as the domain of the ucp maps in the definition of $\WW_n(T)$.
A classic survey on the topic is \cite{farenick1993b}.

One is tempted to include the set
\[
\WW_\infty(T)=\{\phi(T):\ \phi:\oss{T}\to B(\ell^2(\NN))\ \text{ is ucp }\}
\]
(or even higher-dimensional versions in the non-separable case)
in the list $\{\WW_n(T):\ n\in\NN\}$.
But we have the following:
\begin{proposition}\label{proposition: el rango matricial alcanza}
Let $S\in B(H)$, $T\in B(K)$. The following statements are equivalent:

\begin{enumerate}
\item $\WW_n(S)=\WW_n(T)$ for all $n\in\NN$;
\item $\WW_\infty(S)=\WW_\infty(T)$.

\end{enumerate}
\end{proposition}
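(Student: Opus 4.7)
The plan is to prove the two implications separately, treating the matricial levels as ``compressible'' to and ``extendable'' from the infinite level via corners of $\ell^2(\NN)$. Throughout I will let $P_n$ denote the projection of $\ell^2(\NN)$ onto the span of its first $n$ basis vectors, identify $P_n B(\ell^2(\NN)) P_n$ with $M_n(\CC)$, and freely use Arveson's Extension Theorem together with the fact (recalled in the preliminaries) that the set of ucp maps $\oss{T}\to B(\ell^2(\NN))$ is BW-compact.

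For (2)$\Rightarrow$(1), I would start from $A\in \WW_n(S)$ with witnessing ucp map $\phi:\oss{S}\to M_n(\CC)$. Fix a state $\lambda$ on $\oss{S}$ and form the ucp inflation $\tilde\phi(X)=\phi(X)\oplus\lambda(X)\,I$ into $B(\CC^n\oplus \ell^2(\NN))\cong B(\ell^2(\NN))$. Then $\tilde\phi(S)=A\oplus\lambda(S)I$ lies in $\WW_\infty(S)=\WW_\infty(T)$, so there is a ucp $\psi:\oss{T}\to B(\ell^2(\NN))$ with $\psi(T)=A\oplus \lambda(S)I$. Compressing by $P_n$ yields a ucp map $\oss{T}\to M_n(\CC)$ sending $T$ to $A$, giving $A\in \WW_n(T)$. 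Symmetry closes the implication.

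For the harder direction (1)$\Rightarrow$(2), I would take $A=\phi(S)\in\WW_\infty(S)$ with ucp $\phi:\oss{S}\to B(\ell^2(\NN))$. For each $n$, the compression $\phi_n(X)=P_n\phi(X)P_n$ is a ucp map into $M_n(\CC)$, so $P_nAP_n=\phi_n(S)\in\WW_n(S)=\WW_n(T)$. Pick ucp $\psi_n:\oss{T}\to M_n(\CC)$ with $\psi_n(T)=P_nAP_n$, fix a state $\mu$ on $\oss{T}$, and define the ucp map
\[
\tilde\psi_n(X)=\psi_n(X)\oplus \mu(X)(I-P_n):\oss{T}\to B(\ell^2(\NN)).
\]
Then $\tilde\psi_n(T)=P_nAP_n+\mu(T)(I-P_n)$; since $P_nAP_n\to A$ and $(I-P_n)\to 0$ in WOT, we get $\tilde\psi_n(T)\to A$ in WOT.

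By BW-compactness, pass to a BW-convergent subnet $\tilde\psi_{n_\alpha}\to \tilde\psi$, which is ucp and satisfies $\tilde\psi(T)=A$ by uniqueness of WOT limits. Hence $A\in\WW_\infty(T)$, and symmetry completes the proof. The main obstacle I expect is precisely this last step: ensuring that the approximants $\tilde\psi_n$ (which only see $A$ through its finite compressions) can be collated into a single global ucp map recovering $A$ exactly. The BW-compactness of the ucp ball is what makes this work, and without it the argument would collapse.
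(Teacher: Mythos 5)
Your proof is correct and follows essentially the same route as the paper's: for (1)$\Rightarrow$(2) compress $\phi(S)$ by finite-rank projections, land in $\WW_n(S)=\WW_n(T)$, and pass to a BW-cluster point of the resulting maps; for (2)$\Rightarrow$(1) view $M_n(\CC)$ as a corner of $B(\ell^2(\NN))$ and compress back by $P_n$. Your padding of the finite-dimensional maps with a state $\lambda(\cdot)I$ (resp. $\mu(\cdot)(I-P_n)$) to keep everything unital is a small refinement that actually tidies up a point the paper's own write-up leaves implicit, namely that a corner-valued ucp map is not literally unital into $B(\ell^2(\NN))$.
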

\begin{proof}
Assume first that $\WW_n(S)=\WW_n(T)$ for all $n\in\NN$. Let $X\in\WW_\infty(S)$. So $X=\phi(S)\in B(\ell^2(\NN))$   for some ucp map $\phi$. Let $\{P_j\}$ be an increasing net of finite-dimensional projections with $P_j\to I$ strongly. Let $k(j)$ be the rank of $P_j$. We can think of $P_jXP_j\in M_{k(j)}(\CC)$. So $P_jXP_j=P_j\phi(S)P_j\in \WW_{k(j)}(S)=\WW_{k(j)}(T)$. Then there exists a ucp map $\psi_j:B(K)\to M_{k(j)}(\CC)$ with $\psi_j(T)=P_jXP_j$. Let $\psi$ be a BW-cluster point of the net $\{\psi_j\}$. Then $\psi(T)=\lim_j\psi_j(T)=\lim_j P_jXP_j=X$, so $X\in \WW_\infty(T)$. We have proven that $\WW_\infty(S)\subset \WW_\infty(T)$, and exchanging roles we get the equality.

Conversely, assume now that $\WW_\infty(S)=\WW_\infty(T)$. Fix $n\in\NN$. Let $X\in\WW_n(S)$. By identifying $M_n(\CC)$ with  the ``upper left corner'' of $B(\ell^2(\NN))$, we may assume $X\in\WW_\infty(S)=\WW_\infty(T)$. Then there exists a ucp map $\psi:B(K)\to B(\ell^2(\NN))$ with $\psi(T)=X$. If $P$ is the projection of rank $n$ such that $X=PXP$, then $P\psi P$ can be seen as a ucp map $B(K)\to M_n(\CC)$. So $X\in \WW_n(T)$. We have proven that $\WW_n(S)\subset\WW_n(T)$, and now reversing roles we get $\WW_n(S)=\WW_n(T)$.
\end{proof}

We will also consider briefly the \emph{spatial matricial range}
\[
\WW^s(T)=\{\WW_n^s(T):\ n\in\NN\},
\] where
\[
\WW_n^s(T)=\{V^*TV:\ V:\CC^n\to H\ \text{ isometry}\,\}.
\]

\bigskip

The following result is due to Bunce--Salinas \cite[Theorem 2.5]{bunce--salinas1976}. The form we use  is taken from \cite[pp. 335--336]{arveson1977}; the proof follows mostly \cite[Lemma II.5.2]{Davidson-book}, but we use a slightly sharper version of Glimm's Lemma than the one used by Davidson.
\begin{lemma}[Bunce--Salinas]\label{theorem: the form of phi(T)}
Let $\phi:B(H)\to M_n(\CC)$ be ucp and such that $\phi(L)=0$ for every compact operator $L$, and let $\cA\subset B(H)$ be a separable C$^*$-algebra. Then there exists a sequence of isometries $V_k:\CC^n\to H$ such that $V_k\to0$ weakly and \[\|\phi(T)-V_k^*TV_k\|\to0,\ \ \ \ T\in\cA.\]
\end{lemma}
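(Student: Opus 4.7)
The plan is to combine a Stinespring dilation with a matrix-valued Glimm-type approximation, followed by a diagonal extraction using the separability of $\cA$.

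First, by Stinespring's theorem write $\phi(T)=W^{*}\pi(T)W$ with $\pi\colon B(H)\to B(\widetilde H)$ a $*$-representation and $W\colon\CC^{n}\to\widetilde H$ an isometry; after passing to the minimal dilation, the hypothesis $\phi|_{K(H)}=0$ forces $\pi|_{K(H)}=0$. Indeed, for $0\leq L\in K(H)$ one has $0=\phi(L)=W^{*}\pi(L)W$, so $\pi(L)^{1/2}W=0$; since $K(H)$ is an ideal and $\widetilde H=\overline{\pi(B(H))W\CC^{n}}$ by minimality, the identity $\pi(LT)W=\pi(L)\pi(T)W$ together with $LT$ compact give $\pi(L)=0$ on a dense subset of $\widetilde H$. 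In particular, $\pi|_{\cA}$ annihilates $\cA\cap K(H)$.

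The heart of the proof is the following matricial ``sharp Glimm'' claim: for every finite $F\subset\cA$, every finite $G\subset H$, and every $\varepsilon>0$ there exists an isometry $V\colon\CC^{n}\to H$ with $\|V^{*}TV-\phi(T)\|<\varepsilon$ for all $T\in F$ and $|\langle Ve_{i},\eta\rangle|<\varepsilon$ for all $i\leq n$ and all $\eta\in G$. This is exactly where the compact-annihilating property of $\pi|_{\cA}$ is used: by a Voiculescu-type absorption (or a direct finite-rank construction along almost-invariant subspaces), the Stinespring isometry $W$ can be transferred back to an isometry into $H$ realising the prescribed approximation on $F$. Weak smallness against $G$ is then arranged by perturbing the approximating isometry so that its range is orthogonal to the finite-dimensional span of $G$; such a perturbation modifies only finite-rank data and therefore affects $\phi$ by an arbitrarily small error, again by the compact-annihilating condition.

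Granted the claim, enumerate a norm-dense sequence $\{T_{m}\}\subset\cA$ and a dense sequence $\{\eta_{j}\}\subset H$, and apply the claim with $F=\{T_{1},\dots,T_{k}\}$, $G=\{\eta_{1},\dots,\eta_{k}\}$ and $\varepsilon=1/k$ to obtain isometries $V_{k}$. Then $V_{k}\to 0$ weakly (since $\|V_{k}\|=1$ and $V_{k}$ pairs to zero with the dense set $\{\eta_{j}\}$), and $\|\phi(T_{m})-V_{k}^{*}T_{m}V_{k}\|\to 0$ for each $m$; the uniform bound $\|V_{k}^{*}TV_{k}-\phi(T)\|\leq 2\|T\|$ extends norm convergence to all of $\cA$ by density. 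The main obstacle is the sharp Glimm claim itself: quantifying the absorption of $\pi$ into the identity representation while simultaneously retaining isometric cutdowns onto $\CC^{n}$ and weak smallness against a prescribed finite set of vectors is the delicate step where the ``slightly sharper'' version of Glimm's Lemma alluded to in the text carries the technical weight of the proof.
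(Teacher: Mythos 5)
Your proposal has a genuine gap at its center: the ``matricial sharp Glimm'' claim is never proved, and it is not a routine reduction --- it \emph{is} the lemma (in finitary form). Everything before it (the Stinespring dilation and the observation that $\pi$ kills the compacts) and everything after it (the diagonal extraction over dense sequences in $\cA$ and $H$) is correct but easy; the entire difficulty of the statement has been relocated into the claim, which you then justify only by the phrase ``by a Voiculescu-type absorption (or a direct finite-rank construction along almost-invariant subspaces).'' Voiculescu's theorem, as usually stated, gives approximate unitary equivalence modulo compacts of $\id\oplus\pi$ with $\id$ for representations annihilating $K(H)$; extracting from it an isometry $V:\CC^n\to H$ with $\|V^*TV-\phi(T)\|<\varepsilon$ on a finite set requires a genuine argument (transporting the $n$-dimensional cutdown $W\CC^n\subset\widetilde H$ through the approximate equivalence and controlling the error), and none of that is carried out. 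You acknowledge as much in your closing sentence, which is honest but confirms that the proof is incomplete. In addition, your mechanism for weak smallness is flawed as stated: replacing $V$ by an isometry $V'$ whose range is orthogonal to $\spann G$ changes $V^*TV$ by roughly $\|T\|\,\|V-V'\|$, and nothing in your argument controls $\|V-V'\|$ --- the ``compact-annihilating condition'' is a property of $\phi$, not of an individual approximating isometry. (A correct fix is to adjoin the finitely many rank-one operators $\eta\otimes\eta$, $\eta\in G$, to $\cA$ --- the resulting C$^*$-algebra is still separable, $\phi$ vanishes on them, and $V^*(\eta\otimes\eta)V\approx0$ forces $|\langle Ve_i,\eta\rangle|\approx0$.)

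For comparison, the paper avoids Voiculescu entirely by a compact, self-contained device: it amplifies $\phi$ to the state $\Phi(A)=\langle\phi^{(n)}(A)\tilde\xi,\tilde\xi\rangle$ on $M_n(\cA)$, which vanishes on the compacts of $M_n(\cA)$, and applies the sharp scalar form of Glimm's Lemma to obtain an orthonormal sequence $\tilde\eta^k\in H^n$ with $\Phi(A)=\lim_k\langle A\tilde\eta^k,\tilde\eta^k\rangle$. Unitality of $\phi$ forces the components $\sqrt n\,\eta^k_j$ to be asymptotically orthonormal, so the maps $X_k\xi_j=\sqrt n\,\eta^k_j$ satisfy $X_k^*X_k\to I_n$ and their polar parts $V_k$ are the desired isometries, with $V_k\to0$ weakly because the $\tilde\eta^k$ are orthonormal. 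If you want to complete your route, the honest path is to prove your finitary claim by exactly this amplification trick --- at which point the Stinespring dilation and the absorption language become unnecessary.
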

\begin{proof}
For a fixed orthonormal basis $\xi_1,\ldots,\xi_n$ of $\CC^n$, consider the map $\Phi:M_n(\cA)\to\CC$ given by
\[
\Phi(A)=\langle \phi^{(n)}(A)\tilde\xi,\tilde\xi\rangle,
\]
where $\tilde\xi=\tfrac1{\sqrt n}\,\begin{bmatrix}\xi_1&\cdots&\xi_n\end{bmatrix}^\transpose \in (\CC^n)^n$. The fact that $\phi$ is ucp makes $\Phi$ a state. By construction, $\Phi(A)=0$ if all entries of $A$ are compact; and the compact operators of $M_n(\cA)$ are precisely the matrices where all entries are  compact. Thus Glimm's Lemma (see \cite[Lemma II.5.1]{Davidson-book}, but here we use the exact form  of \cite[Lemma 1.4.11]{Brown--Ozawa-book}) applies to the C$^*$-algebra $M_n(\cA)$ and the state $\Phi$, and we get an orthonormal sequence vectors $\{\tilde\eta^k\}\subset H^n$, where $\tilde\eta^k=\begin{bmatrix} \eta_1^k&\cdots&\eta_n^k\end{bmatrix}^\transpose \in H^n$ and
\begin{equation}\label{equation: the form of phi(T):1}\Phi(A)=\lim_{k\to\infty}\langle A\tilde\eta^k,\tilde\eta^k\rangle.
\end{equation}
Asymptotically, $\{\sqrt n\,\eta^k_1,\ldots,\sqrt n\,\eta^k_n\}$ is orthonormal; indeed,
using that $\phi$ is unital and so $\phi^{(n)}(I\otimes E_{hj})=I\otimes E_{hj}$,
\begin{align*}
\lim_{k\to\infty}\langle \sqrt n\,\eta^k_j,\sqrt n\,\eta^k_h\rangle
&=n\lim_{k\to\infty}\langle(I\otimes  E_{hj})\tilde\eta^k,\tilde\eta^k\rangle
=n\Phi(I\otimes E_{hj}) \abajo
&=n\,\langle\phi^{(n)}(I\otimes E_{hj})\tilde\xi,\tilde\xi\rangle
=\langle \xi_j,\xi_h\rangle=\delta_{j,h}.
\end{align*}
Define  linear maps $X_k:\CC^n\to H$ by $X_k\xi_j=\sqrt n\,\eta^k_j$. For $\xi,\eta\in\CC^n$, $\varepsilon>0$, and $k$ big enough so that $|\langle\sqrt n\,\eta^k_h,\sqrt n\,\eta^k_j\rangle -\delta_{h,j}|<\varepsilon$,
\begin{align*}
|\langle (X_k^*X_k-I_n)\xi,\eta\rangle |
&=|\langle X_k\xi,X_k\eta\rangle -\langle\xi,\eta\rangle| \abajo
&=\left|\sum_{h,j}\langle\xi,\xi_h\rangle\overline{\langle\eta,\xi_j\rangle}\,
(\langle\sqrt n\,\eta^k_h,\sqrt n\,\eta^k_j\rangle -\delta_{h,j})\right| \abajo
&\leq\varepsilon\,\sum_{h,j}|\langle\xi,\xi_h\rangle\overline{\langle\eta,\xi_j\rangle}|
\leq\varepsilon\,n^2\|\xi\|\,\|\eta\|.
\end{align*}
It follows that $\|X_k^*X_k-I_n\|<\varepsilon\,n^2$, so $\|X_k^*X_k-I_n\|\to0$ as $k\to\infty$. Now, for each $k$,  let $X_k=V_k(X_k^*X_k)^{1/2}$ be the polar decomposition. For $k$ big enough, $X_k^*X_k$ is invertible, so for such $k$,
\[
V_k^*V_k= (X_k^*X_k)^{-1/2}X_k^*X_k(X_k^*X_k)^{-1/2}=I_n.
\]
Hence $V_k:\CC^n\to H$ is an isometry. Also, $\|V_k-X_k\|=\|X_k((X_k^*X_k)^{-1/2}-I_n)\|\to0$.  And $V_k\to0$ weakly since the range of $V_k$ is contained in the range of $X_k$, and $\{\tilde\eta_k\}$ is orthonormal. We have, by \eqref{equation: the form of phi(T):1},
\[
\langle (\phi(T)-X_k^*TX_k)\xi_h,\xi_j\rangle
=n\,\langle \Phi(T\otimes E_{jh})\tilde\xi,\tilde\xi\rangle - n\,\langle (T\otimes E_{jh})\tilde\eta^k,\tilde\eta^k\rangle\xrightarrow[k\to\infty]{}0.
\]As the above convergence is in $M_n(\CC)$, it also holds in norm. Thus
\begin{align*}
\lim_{k\to\infty}\|\phi(T)-V_k^*TV_k\|
&=\lim_{k\to\infty}\|\phi(T)-X_k^*TX_k\|\xrightarrow[k\to\infty]{}0. \qedhere
\end{align*}
\end{proof}

\begin{remark}
Another set considered by Bunce--Salinas \cite{bunce--salinas1976} is the \emph{essential matricial range}. This would be
\[
\WW_n^e(T)=\{\phi(T):\ \phi:B(H)\to M_n(\CC)\ \text{ ucp,}\ \phi|_{K(H)}=0\}.
\]
By \cref{theorem: the form of phi(T)},  $\WW_n^e(T)\subset \overline{\WW_n^s(T)}$.

\end{remark}

In analogy  to the fact that the classical spatial numerical range is dense in the numerical range---minus the fact that $\WW_n^s(T)$ is often not convex---we have the following result. The proof we provide does not follow the original argument.

\begin{proposition}[Bunce--Salinas \cite{bunce--salinas1976}]\label{theorem: Bunce-Salinas spatial matricial range}
Let $T\in B(H)$. Then
\[
\WW_n(T)=\{\sum_k A_k^*XA_k:\ X\in \overline{\WW_n^s(T)},\ A_1,A_2,\ldots\in M_n(\CC), \sum_kA_k^*A_k=I_n\}.
\]
In other words, the C$^*$-convex hull of the closure of the $n^{\rm th}$ spatial matricial range of $T$ equals the $n^{\rm th}$ matricial range of $T$. When $T$ is compact, the closure of $\WW_n^s(T)$ is not needed. If $C^*(T)\cap K(H)=\{0\}$, then $\WW_n(T)=\overline{\WW_n^s(T)}$.

\end{proposition}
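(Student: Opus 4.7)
The plan is to prove both containments. The easy one, that C$^*$-convex combinations of $\overline{\WW_n^s(T)}$ lie in $\WW_n(T)$, follows because $\overline{\WW_n^s(T)} \subset \WW_n(T)$ (spatial compressions come from ucp maps, and $\WW_n(T)$ is BW-closed), so if $X_k = \psi_k(T)$ with each $\psi_k : B(H) \to M_n(\CC)$ ucp and $\sum_k A_k^* A_k = I_n$, then $\psi := \sum_k A_k^* \psi_k(\cdot) A_k$ is itself ucp and satisfies $\psi(T) = \sum_k A_k^* X_k A_k$.

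For the reverse, fix $Y = \varphi(T)$ with $\varphi: B(H) \to M_n(\CC)$ ucp. By Stinespring, write $\varphi = V^*\pi(\cdot)V$ for a $*$-representation $\pi : B(H) \to B(\cK)$ and isometry $V : \CC^n \to \cK$. I would then invoke the decomposition $\pi = \pi_a \oplus \pi_s$, where $\pi_a$ is unitarily equivalent to an ampliation of the identity representation of $B(H)$ (so $\pi_a(T) \simeq T \otimes I$ on $H \otimes \cK_0$ for some Hilbert space $\cK_0$) and $\pi_s$ annihilates $K(H)$, and split $V = V_a + V_s$ accordingly with $V_a^*V_a + V_s^*V_s = I_n$. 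Expanding $V_a = \sum_j W_j \otimes e_j$ along an orthonormal basis $\{e_j\}$ of $\cK_0$ gives $V_a^* \pi_a(T) V_a = \sum_j W_j^* T W_j$, and polar-decomposing $W_j = U_j |W_j|$ and extending the partial isometry $U_j$ to an isometry $\tilde U_j : \CC^n \to H$ yields $W_j^* T W_j = A_j^* X_j A_j$ with $A_j = |W_j|$ and $X_j = \tilde U_j^* T \tilde U_j \in \WW_n^s(T)$. For the singular part, setting $A_s = (V_s^*V_s)^{1/2}$ and (away from the degenerate case, which is handled by perturbation) $X_s = A_s^{-1} V_s^* \pi_s(T) V_s A_s^{-1}$, the map $S \mapsto A_s^{-1}V_s^*\pi_s(S)V_sA_s^{-1}$ is ucp and vanishes on $K(H)$, so \cref{theorem: the form of phi(T)} applied to $C^*(T,I)$ places $X_s \in \overline{\WW_n^s(T)}$. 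Assembling then gives $Y = \sum_j A_j^* X_j A_j + A_s^* X_s A_s$ with $\sum_j A_j^* A_j + A_s^* A_s = I_n$, exhibiting $Y$ as a C$^*$-convex combination of elements of $\overline{\WW_n^s(T)}$.

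For the two refinements: when $T$ is compact, $\pi_s(T) = 0$ so the singular contribution drops out and each $X_j$ already lies in $\WW_n^s(T)$, and one must absorb the deficit $I_n - V_a^*V_a$ by enlarging the normal dilation or using isometries into $\ker T$ when these are available. When $C^*(T) \cap K(H) = \{0\}$, the quotient $q : B(H) \to B(H)/K(H)$ is injective on $C^*(T,I)$, so $\varphi|_{C^*(T,I)}$ factors as a ucp map on the operator system $q(C^*(T,I)) \subset B(H)/K(H)$; extending via Arveson to a ucp map on the Calkin algebra and pulling back gives a ucp map on $B(H)$ that vanishes on $K(H)$ and agrees with $\varphi$ at $T$, and \cref{theorem: the form of phi(T)} then places $\varphi(T)$ directly in $\overline{\WW_n^s(T)}$ with no C$^*$-convex combination needed. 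I expect the chief obstacle to be the orchestration of the polar decomposition step with the handling of the non-invertible case for $A_s$, together with the delicate bookkeeping needed to drop the closure in the compact case.
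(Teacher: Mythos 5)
Your proof of the main equality is correct, but it takes a genuinely different---and in one respect more careful---route than the paper. The paper argues from the dichotomy ``$\ker\pi=\{0\}$ or $\ker\pi=K(H)$'' and, in the faithful case, identifies $\pi(T)$ with $T\otimes I$; this identification is not valid for an arbitrary faithful representation of $B(H)$ (e.g.\ $\pi=\mathrm{id}\oplus\sigma$ with $\sigma$ factoring through the Calkin algebra is faithful but is not an ampliation of the identity). Your decomposition $\pi=\pi_a\oplus\pi_s$ into an ampliation part and a part annihilating $K(H)$, with $V=V_a+V_s$ and the cross terms vanishing, is the correct general statement and handles in one stroke the mixed case that the paper's dichotomy silently excludes. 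Your treatment of the normal part (expanding $V_a=\sum_j W_j\otimes e_j$, polar-decomposing $W_j$, and extending the partial isometries) and of the singular part (renormalizing by $A_s=(V_s^*V_s)^{1/2}$ and invoking \cref{theorem: the form of phi(T)}) both work; note only that the sum $\sum_j W_j^*TW_j$ converges absolutely because $\sum_j\tr(W_j^*W_j)\leq n$, that extending $U_j$ to an isometry $\CC^n\to H$ presupposes $\dim H\geq n$ (as does the statement itself), and that the degenerate case $\ker A_s\neq\{0\}$ is cleanest handled not by perturbation but by replacing your map with $S\mapsto (A_s+(I-Q))^{-1}V_s^*\pi_s(S)V_s(A_s+(I-Q))^{-1}+f(S)(I-Q)$, where $Q$ is the range projection of $A_s$ and $f$ is a state vanishing on $K(H)$; this is ucp, kills the compacts, and still satisfies $A_sX_sA_s=V_s^*\pi_s(T)V_s$. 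The final refinement ($C^*(T)\cap K(H)=\{0\}$) is argued exactly as in the paper.

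The one genuine gap is the compact case, and you have correctly put your finger on where it lies: when $T$ is compact the singular contribution vanishes but leaves the deficit $I_n-V_a^*V_a=V_s^*V_s$, and ``absorbing'' it requires expressing a cp map with total mass $V_s^*V_s$ and value $0$ at $T$ as a combination $\sum A_k^*X_kA_k$ with $X_k\in\WW_n^s(T)$. This cannot be done in general: for $T=\mathrm{diag}(1,\tfrac12,\tfrac13,\ldots)$, an injective positive compact operator, and $n=1$, a singular state gives $0\in\WW_1(T)$, yet $\WW_1^s(T)=\{\langle T\xi,\xi\rangle:\|\xi\|=1\}\subset(0,1]$, and no countable convex combination $\sum_k|a_k|^2x_k$ with $\sum_k|a_k|^2=1$ and $x_k>0$ equals $0$. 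So the assertion that the closure is unnecessary for compact $T$ fails as stated; the paper's own justification (``this last case does not apply'') rests on the flawed dichotomy above, since a ucp map annihilating $K(H)$ certainly exists and sends a compact $T$ to $0$. You should not expect to close this gap by choosing isometries into $\ker T$, since $\ker T$ may be trivial; the honest conclusion is that for compact $T$ one obtains $\WW_n(T)$ as the C$^*$-convex hull of $\WW_n^s(T)\cup\{0\}$ (equivalently, one must either retain the closure or adjoin the image of the singular part explicitly).
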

\begin{proof}
Fix $n\in\NN$. It is clear that $\WW_n^s(T)\subset \WW_n(T)$. Now consider a ucp map $\phi:B(H)\to M_n(\CC)$, and write $\phi=V^*\pi V$ for a Stinespring dilation, where $V:\CC^n\to K$ is an isometry and $\pi:B(H)\to B(K)$  a representation. We want to show that $\phi(T)$ is of the form $\sum_kA_k^*XA_k$ with $X\in\overline{\WW_n^s(T)}$, and $\sum_kA_k^*A_k^*=I_n$.

Since $\pi$ is bounded, its kernel is a closed two-sided ideal of $B(H)$. Thus there are just two possibilities: either $\pi$ is an isometry, or $\ker \pi=K(H)$.
Assume first that $\pi$ is an isometry. Then  we can identify $\pi(T)$ with $T\otimes I$: indeed, it is not hard to show that there exist a Hilbert space $H_0$ and a unitary $U:K\to H\otimes H_0$ such that $\pi(T)=U^*(T\otimes I)U$. Let $\{F_{st}\}$ denote a set of matrix units for $H_0$, corresponding to the canonical basis $\{f_n\}$ (note that $H_0$ may be finite or infinite-dimensional). Then, with $W:H\otimes \CC f_1\to H$ the isometry $W(\xi\otimes \lambda f_1)=\lambda\xi$,
\[
\langle W^*TW(\xi\otimes\lambda f_1),\eta\otimes\mu f_1\rangle
= \lambda\overline\mu\,\langle T\xi,\eta\rangle =\langle (T\otimes F_{11})(\xi\otimes\lambda f_1),\eta\otimes \mu f_1\rangle.
\]
For each $s$, let $H_s\subset H$ be the subspace $H_s=W(I\otimes F_{1s})UV\CC^n$. We obviously have $\dim H_s\leq n$. Let $R_s:\CC^n\to H$ be an isometry that contains $H_s$ in its range. So $R_s^*R_s=I_n$, and $R_sR_s^*$ is a projection that contains $H_s$ in its range. We have
\begin{align*}
\phi(T)&=V^*U^*(T\otimes I)UV=V^*U^*\left(\sum_s T\otimes F_{ss}\right)UV \abajo
&=V^*U^* \left(\sum_s (I\otimes F_{s1})(T\otimes F_{11})(I\otimes F_{1s})\right)UV \abajo
&=\sum_s V^*U^* (I\otimes F_{1s})^*W^*TW(I\otimes F_{1s})UV \abajo
&=\sum_s V^*U^* (I\otimes F_{1s})^*W^*R_s\,(R_s^*TR_s)\,R_s^*W(I\otimes F_{1s})UV.
\end{align*}
The convergence of the sum $\sum_s T\otimes F_{ss}$ is strong, but our last sum occurs in $M_n(\CC)$, so the convergence is in norm. Also,
\begin{align*}
\sum_s V^*U^* (I\otimes F_{1s})^*W^*R_sR_s^*W(I\otimes F_{1j})UV
&=V^*U^*\left(\sum_s I\otimes F_{ss}\right) UV \abajo
&=V^*U^*UV=I_n.
\end{align*}
As $R_s^*TR_s\in\WW_n^s(T)$ for all $s$, and $R_s^*W(I\otimes F_{1s})UV\in M_n(\CC)$,
we have shown that \[\phi(T)\in \{\sum_k A_k^*XA_k:\ X\in {\WW_n^s(T)},\ A_1,A_2,\ldots\in M_n(\CC),\ \sum_kA_k^*A_k=I_n\}\]
(note the lack of closure of the spatial matricial range).

In the case where $\pi=0$ on  $K(H)$, we have the same property for $\phi$, and we may apply \cref{theorem: the form of phi(T)} to the separable C$^*$-algebra $C^*(T)$; that way, we obtain isometries $V_k:\CC^n\to H$ with $V_k^*TV_k\to\phi(T)$. So $\phi(T)\in \overline{\WW_n^s(T)}$.

When $T$ is compact, this last case does not apply, and so the closure of $\WW_n^s(T)$ is not needed.

When $C^*(T)\cap K(H)=\{0\}$, the quotient map $\rho:B(H)\to B(H)/K(H)$ is isometric on $C^*(T)$. Thus the map $\phi\circ\rho^{-1}:\rho(C^*(T))\to M_n(\CC)$ is ucp and maps $\rho(T)$ to $\phi(T)$. By Arveson's Extension Theorem, there exists $\tilde\phi:B(H)/K(H)\to M_n(\CC)$, ucp, that extends $\phi\circ\rho^{-1}$. Now $\tilde\phi\circ\rho:B(H)\to M_n(\CC)$ is a ucp map   that annihilates $K(H)$ and such that $\tilde\phi(\rho(T))=\phi(T)$. By \cref{theorem: the form of phi(T)}, $\phi(T)\in\overline{\WW_n^s(T)}$.
\end{proof}

The matricial range was initially defined and studied by Arveson \cite{arveson1972}. It is straightforward to check that each $\WW_n(T)$ is compact and C$^*$-convex (the latter in the sense of \ref{theorem: arveson characterization of matricial range:2:2} in \cref{theorem: arveson characterization of matricial range}), and that $\WW_m(X)\subset\WW_m(T)$ for all $X\in\WW_n(T)$. He mentions, after the definition, that ``it is not hard'' to see that the aforementioned properties characterize the matricial range. The proof we know and write below (\cref{theorem: arveson characterization of matricial range})  is not ``very hard'', but it is not trivial either since it depends on \cref{theorem: Bunce-Salinas spatial  matricial range}, that itself depends on Glimm's Lemma.
Besides Arveson's characterization---\eqref{theorem: arveson characterization of matricial range:2} below---we include a slightly more explicit characterization in terms of finite C$^*$-convex combinations.

\begin{theorem}\label{theorem: arveson characterization of matricial range}
Let $\{\cX_n:\ n\in\NN\}$ be a sequence of sets $\cX_n\subset M_n(\CC)$, and $c>0$. Then the following statements are equivalent:

\begin{enumerate}
\item\label{theorem: arveson characterization of matricial range:1} there exists a  Hilbert space $H$ and $T\in B(H)$ such that $\|T\|\leq c$ and $\cX_n=\WW_n(T)$ for each $n\in\NN$;
\item\label{theorem: arveson characterization of matricial range:2} the sequence $\{\cX_n\}$ satisfies the following properties:

\begin{enumerate}
\item for each $n$, the set $\cX_n$ is compact, and contained in the ball of radius $c$;
\item\label{theorem: arveson characterization of matricial range:2:2} for each $n$, if $X_1,X_2,\ldots\subset \cX_n$ and $A_1,A_2,\ldots\in M_n(\CC)$ with $\sum_k A_k^*A_k=I_n$, then $\sum_k A_k^*X_kA_k\in \cX_n$;
\item\label{theorem: arveson characterization of matricial range:2:3} for each $m,n\in\NN$, $\WW_m(\cX_n)\subset \cX_m$.
\end{enumerate}
\item\label{theorem: arveson characterization of matricial range:3} the sequence $\{\cX_n\}$ satisfies the following properties:

\begin{enumerate}
\item for each $n$, the set $\cX_n$ is compact, and contained in the ball of radius $c$;
\item\label{theorem: arveson characterization of matricial range:3:2} for each $n,m$, if $X_1,X_2,\ldots,X_r\subset \cX_n$ and $A_1,A_2,\ldots,A_r\in M_{n\times m}(\CC)$ with $\sum_k A_k^*A_k=I_m$, then $\sum_k A_k^*X_kA_k\in \cX_m$.

\end{enumerate}
\end{enumerate}
\end{theorem}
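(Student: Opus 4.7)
The plan is to prove (1)$\Rightarrow$(2)$\Leftrightarrow$(3) and then (2)$\Rightarrow$(1), the last step leveraging \cref{theorem: Bunce-Salinas spatial matricial range}. The implication (1)$\Rightarrow$(2) is routine: compactness of $\WW_n(T)$ comes from BW-compactness of the set of ucp maps $\oss{T}\to M_n(\CC)$ and BW-continuity of evaluation at $T$; contractivity of ucp maps gives the norm bound; \ref{theorem: arveson characterization of matricial range:2:2} follows by forming $\phi=\sum_kA_k^*\phi_k(\cdot)A_k$ from ucp maps $\phi_k:\oss{T}\to M_n(\CC)$ with $\phi_k(T)=X_k$; and \ref{theorem: arveson characterization of matricial range:2:3} is just composition of ucp maps.

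The equivalence (2)$\Leftrightarrow$(3) is the combinatorial heart. For (2)$\Rightarrow$(3), the key lemma is that any block direct sum $X_1\oplus\cdots\oplus X_r$ with $X_k\in\cX_n$ lies in $\cX_{rn}$: this is obtained inductively from the $*$-homomorphism $Y\mapsto Y\oplus Y$ (which gives $X_k\oplus X_k\in\WW_{2n}(X_k)\subset\cX_{2n}$ via \ref{theorem: arveson characterization of matricial range:2:3}) and the square C*-convex combination with the two block projections on $\CC^{2n}$ (via \ref{theorem: arveson characterization of matricial range:2:2}). Stacking $A_1,\ldots,A_r$ into a single isometric $A\in M_{rn\times m}(\CC)$ then turns the desired rectangular combination into $A^*(X_1\oplus\cdots\oplus X_r)A\in\WW_m(X_1\oplus\cdots\oplus X_r)\subset\cX_m$. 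For (3)$\Rightarrow$(2), \ref{theorem: arveson characterization of matricial range:2:3} is recovered from Arveson extension plus Stinespring: a ucp $\phi:\oss{X}\to M_m(\CC)$ with $X\in\cX_n$ extends to $M_n(\CC)\to M_m(\CC)$ and then writes as $\phi(X)=V^*(X\oplus\cdots\oplus X)V$ for an isometry $V:\CC^m\to\CC^{rn}$, which after slicing $V$ into blocks $A_k\in M_{n\times m}(\CC)$ becomes a finite rectangular C*-convex combination and thus lies in $\cX_m$ by \ref{theorem: arveson characterization of matricial range:3:2}. The countable form of \ref{theorem: arveson characterization of matricial range:2:2} then follows from the finite square case by truncating to $K$ terms, adding a correction $R_K^{1/2}ZR_K^{1/2}$ with $R_K=I_n-\sum_{k\leq K}A_k^*A_k$ and any fixed $Z\in\cX_n$, and invoking compactness.

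For (2)$\Rightarrow$(1), set $H=\bigoplus_{m\in\NN}\bigoplus_{X\in\cX_m}\CC^m$ and $T=\bigoplus_{m,X}X\in B(H)$, of norm at most $c$. Each $X\in\cX_n$ is a direct summand of $T$, giving $\cX_n\subset\WW_n^s(T)\subset\WW_n(T)$. The reverse inclusion reduces, via \cref{theorem: Bunce-Salinas spatial matricial range} and \ref{theorem: arveson characterization of matricial range:2:2}, to showing $\WW_n^s(T)\subset\cX_n$. Decomposing an isometry $V:\CC^n\to H$ along the direct sum produces $V^*TV=\sum_{m,X}V_{m,X}^*XV_{m,X}$ with $\sum V_{m,X}^*V_{m,X}=I_n$; truncating to $K$ indices, adding a correction block $R_K^{1/2}$ to the stacked $V_{m_i,X_i}$, and forming $Z_K=X_1\oplus\cdots\oplus X_K\oplus Z_0$ for some fixed $Z_0\in\cX_n$ (cross-$\cX$ direct sums absorbed into $\cX_{m_1+\cdots+m_K+n}$ via the padding ucp map $Y\mapsto Y\oplus(\tfrac{\tr Y}{m})I$, \ref{theorem: arveson characterization of matricial range:2:3}, and \ref{theorem: arveson characterization of matricial range:2:2}) exhibits $V^*TV$ as a norm limit of isometric compressions $\tilde V_K^*Z_K\tilde V_K\in\WW_n(Z_K)\subset\cX_n$; compactness concludes.

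The main obstacle is the bookkeeping in (2)$\Rightarrow$(1): one must repeatedly manipulate direct sums of elements from different $\cX_m$'s and rectangular compressions into the square C*-convex combination format of \ref{theorem: arveson characterization of matricial range:2:2}, combining \ref{theorem: arveson characterization of matricial range:2:3} with trace-padding and correction terms. The deep input is \cref{theorem: Bunce-Salinas spatial matricial range}—powered by Glimm's Lemma—which identifies the matricial range as the C*-convex hull of the closure of the spatial one and makes the direct-sum construction of $T$ effective.
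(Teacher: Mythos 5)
Your proposal follows essentially the same route as the paper's proof: the same easy implication from \eqref{theorem: arveson characterization of matricial range:1} to the closure properties, the same Kraus/Stinespring argument recovering \ref{theorem: arveson characterization of matricial range:2:3} from the finite rectangular combinations in \ref{theorem: arveson characterization of matricial range:3:2}, the same block-diagonal padding ucp maps to place direct sums $X_1\oplus\cdots\oplus X_r$ (with summands from possibly different $\cX_{n(j)}$) into $\cX_{n(1)+\cdots+n(r)}$, and the same reliance on \cref{theorem: Bunce-Salinas spatial matricial range} to convert $\overline{\WW_n^s(T)}\subset\cX_n$ into $\WW_n(T)\subset\cX_n$ for a direct-sum model operator $T$. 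Your correction-term device ($R_K^{1/2}Z R_K^{1/2}$ with $R_K=I_n-\sum_{k\leq K}A_k^*A_k$ and a fixed $Z\in\cX_n$) for upgrading finite to countable C$^*$-convex combinations is a clean alternative to the paper's renormalization by $R_\ell^{-1/2}$, and avoids having to wait for the partial sums to become invertible.

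One point needs a patch. You take $H=\bigoplus_m\bigoplus_{X\in\cX_m}\CC^m$, which is non-separable whenever some $\cX_m$ is uncountable (the generic case). But the proof of \cref{theorem: Bunce-Salinas spatial matricial range} in this paper hinges on the dichotomy that a representation of $B(H)$ is either isometric or vanishes exactly on $K(H)$; this uses that the only proper nonzero closed two-sided ideal of $B(H)$ is $K(H)$, which holds for separable $H$ but fails in general (for non-separable $H$ one also has, e.g., the ideal of operators with separable range). So as written you invoke that proposition outside the scope of its proof. The fix is exactly the paper's device: each $\cX_m$ is compact, hence separable, so build $H$ and $T$ from countable dense subsets $D_m\subset\cX_m$; then $H$ is separable, $D_n\subset\WW_n^s(T)$ still yields $\cX_n=\overline{D_n}\subset\WW_n(T)$ since $\WW_n(T)$ is closed, and the rest of your argument (truncation, correction block, padding into a single $\cX_N$, and compactness) goes through unchanged.
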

\begin{proof}
\eqref{theorem: arveson characterization of matricial range:1}$\implies$\eqref{theorem: arveson characterization of matricial range:3}
If $\cX_n=\WW_n(T)$, as  $\|\phi(T)\|\leq\|T\|\leq c$ for any ucp map $\phi$, it follows that  $\cX_n$ is contained in the ball of radius $c$. Pointwise-norm limits of ucp maps are ucp; so $\cX_n$ is the image of the BW-compact set $\{\phi:B(H)\to M_n(\CC),\ \text{ucp}\}$ under the (continuous) evaluation map $\phi\longmapsto\phi(T)$, and thus compact. If we have a sequence $X_1,X_2,\ldots,X_r\subset \WW_n(T)$, there exist ucp maps $\phi_k$ with $X_k=\phi_k(T)$. For a sequence $A_1,A_2,\ldots,A_r\subset M_{m\times n}(\CC)$ with $\sum_kA_k^*A_k=I_m$, the map $\phi:=\sum_k A_k^*\phi_n (\cdot)A_k$ is ucp, and so $\sum_k A_k^*X_kA_k=\phi(T)\in\WW_m(T)=\cX_m$.

\eqref{theorem: arveson characterization of matricial range:3}$\implies$\eqref{theorem: arveson characterization of matricial range:2} Fix $n$,   $X_1,X_2,\ldots\subset \cX_n$ and matrices $A_1,A_2,\ldots\in M_n(\CC)$ with $\sum_k A_k^*A_k=I_n$. If only finitely many $A_k$ are nonzero, then we get directly that $\sum_kA_k^*X_kA_k\in\cX_n$. So assume that infinitely many $A_k$ are nonzero. Choose $\ell_0$ such that $\|I-\sum_{k=1}^\ell A_k^*A_k\|<1$ for all $\ell>\ell_0$. Then, for $\ell>\ell_0$, $R_\ell=\sum_{k=1}^\ell A_k^*A_k$ is invertible. By \eqref{theorem: arveson characterization of matricial range:3:2}, $\sum_{k=1}^\ell (A_kR_\ell^{-1/2})^*X_kA_kR_\ell^{-1/2}\in\cX_n$ for all $\ell>\ell_0$. As $R_\ell\to I_n$, we also have $R_\ell^{-1/2}\to I_n$ and so, since $\cX_n$ is closed, $\sum_{k=1}^\infty A_k^*X_kA_k\in\cX_n$.  To check that $\WW_m(\cX_n)\subset\cX_m$, let $Y\in \WW_m(\cX_n)$; so there exist $X\in\cX_n$ and $\psi:M_n(\CC)\to M_m(\CC)$, ucp, with $Y=\psi(X)$. By the Kraus Decomposition, we may write $\psi=\sum_{k=1}^r A_k^*\cdot A_k$, with $A_1,\ldots,A_r\in M_{m\times n}(\CC)$. Then $Y=\sum_{k=1}^rA_k^*XA_k\in\cX_m$ by \eqref{theorem: arveson characterization of matricial range:3:2}.

\eqref{theorem: arveson characterization of matricial range:2}$\implies$\eqref{theorem: arveson characterization of matricial range:1} For each $n\in\NN$, let $\{Q_{n,k}\}_{k\in \NN }$ be a countable dense subset of $\cX_n$. Let $H=\ell^2(\NN)$, where we index the canonical orthonormal basis as \[\{\xi_{n,k,j}:\ n,k\in \NN ,\ j=1,\ldots,n\}.\] For each $n,k\in \NN $ we denote the canonical basis in $\CC^n$ by $\delta_1,\ldots,\delta_n$, and we define linear isometries $W_{n,k}:\CC^n\to H$ by

\[
W_{n,k}\delta_j=\xi_{n,k,j},\ \ \ \ j=1,\ldots,n.
\]
We have
\begin{align*}
\langle W_{n,k}^*\xi_{m,\ell,h},\delta_j\rangle 
&=\langle \xi_{m,\ell,h},W_{n,k}\delta_j\rangle
=\langle \xi_{m,\ell,h},\xi_{n,k,j}\rangle \abajo
&=\delta_{m,n}\,\delta_{\ell,k}\,\delta_{h,j}
=\delta_{m,n}\,\delta_{\ell,k}\,\langle \delta_h,\delta_j\rangle,
\end{align*}
So \[W_{n,k}W_{n,k}^*\xi_{m,\ell,h}=\delta_{m,n}\,\delta_{\ell,k}\,W_{n,k}\delta_h
=\delta_{m,n}\,\delta_{\ell,k}\,\xi_{n,k,h}.\] It follows that $W_{n,k}W_{n,k}^*$ is the orthogonal projection onto the span of the vectors $\xi_{n,k,1},\ldots,\xi_{n,k,n}$. Also,

\[
W_{n,k}^*W_{n',k'}\delta_j=W_{n,k}^*\xi_{n',k',j}=\delta_{n,n'}\delta_{k,k'}\delta_j,
\]so $W_{n,k}^*W_{n,k}=I_n$ and $W_{n,k}^*W_{n',k'}=0$ if $n\ne n'$ or $k\ne k'$. Now define

\[
T=\sum_{n\in\NN}\sum_{k\in \NN } W_{n,k}Q_{n,k}W_{n,k}^*.
\]
The series is well-defined---via strong convergence---because the projections $\{W_{n,k}W_{n,k}^*\}$ add to the identity of $H$. It is clear that $\|T\|\leq\max\{\|Q_{n,k}\|:\ n,k\}<c$. For any isometry $V:\CC^m\to H$, we have

\begin{align*}
  V^*TV =\sum_{n\in\NN}\sum_{k\in \NN } V^*W_{n,k}Q_{n,k}W_{n,k}^*V.
\end{align*}
\textbf{Claim: }If $X_j\in \cX_{n(j)}$, $A_j\in M_{n(j)\times m}(\CC)$, $j\in \NN$, then $\sum_j A_j^*X_jA_j\in\cX_m$.

As
\[
\sum_{n\in\NN}\sum_{k\in \NN } V^*W_{n,k}W_{n,k}^* V
=V^*\,\left(\sum_{n\in\NN}\sum_{k\in \NN } W_{n,k}W_{n,k}^*\right)\, V
=V^*IV=V^*V=I_m
\]
and $W_{n,k}^*V\in M_{n\times m}(\CC)$ for each $n$, the Claim above implies that $V^*TV\in\cX_m$. It follows that $\overline{\WW_n^s(T)}\subset\cX_n$. Then the C$^*$-convexity \eqref{theorem: arveson characterization of matricial range:2:2}, compactness, and  \cref{theorem: Bunce-Salinas spatial matricial range} give us $\WW_n(T)\subset\cX_n$. From $Q_{n,k}=W_{n,k}^*TW_{n,k}$ we obtain $Q_{n,k}\in \WW_n(T)$ for all $k\in \NN $, since the map $X\longmapsto W_{n,k}^*XW_{n,k}$ is ucp $B(H)\to M_n(\CC)$; the density of $\{Q_{n,k}\}$ then shows that $\cX_n\subset \WW_n(T)$. Thus $\WW_n(T)=\cX_n$ for all $n\in\NN$.

It remains to prove the Claim. We will prove the statement for a finite number of matrices, say $1\leq j\leq r$; if we prove that, then an argument with an $R_\ell$ like in the proof of \eqref{theorem: arveson characterization of matricial range:3}$\implies$\eqref{theorem: arveson characterization of matricial range:2} shows the general result. So fix $j\in\NN$, with $1\leq j\leq r$. Consider the ucp map $\psi_j:M_{n(j)}(\CC)\to M_{n(1)+\cdots+n(r)}(\CC)$ given by
\[
\psi_j(X)=\begin{bmatrix}\psi_{j1}(X) \\ & \psi_{j2}(X)\\ & & \ddots \\ & & & \psi_{jr}(X) \end{bmatrix}
\]
where each $\psi_{jk}:M_{n(j)}(\CC)\to M_{n(k)}(\CC)$ is some ucp map, and $\psi_{jj}$ is the identity, so $\psi_{jj}(X_j)=X_j$. By construction $\psi_j$ is ucp, so  \eqref{theorem: arveson characterization of matricial range:2:3} implies that  $\psi_j(X_j)\in \cX_{n(1)+\cdots+n(r)}$ for each $j=1,\ldots,r$. This implies, by \eqref{theorem: arveson characterization of matricial range:2:2}, that

\begin{align*}
\begin{bmatrix} X_1\\ & X_2\\ & & \ddots \\ & & & X_r\end{bmatrix}
=&\begin{bmatrix}
I_{n(1)}\\ & 0\\ & & \ddots\\ & & & 0
\end{bmatrix}
\psi_1(X_1)
\begin{bmatrix}
I_{n(1)}\\ & 0\\ & & \ddots\\ & & & 0
\end{bmatrix}
\abajo
& \ \ \ \ + \cdots \abajo
& \ \ \ \ + \begin{bmatrix}
0\\ & \ddots\\ & & 0\\ & & & I_{n(r)}
\end{bmatrix}
\psi_r(X_r)
\begin{bmatrix}
0\\ & \ddots\\ & & 0\\ & & & I_{n(r)}
\end{bmatrix} \abajo
&\in \cX_{n(1)+\cdots+n(r)}.
\end{align*}
Now, by \eqref{theorem: arveson characterization of matricial range:2:3},
\begin{align*}
\sum_{j=1}^r A_j^*X_jA_j
= \begin{bmatrix}
A_1\\
A_2\\
\vdots\\
A_r
\end{bmatrix}^*
\begin{bmatrix} X_1\\ & X_2\\ & & \ddots \\ & & & X_r\end{bmatrix}
\begin{bmatrix}
A_1\\
A_2\\
\vdots \\
A_r
\end{bmatrix}\in\cX_m, \end{align*}
since conjugation by $\begin{bmatrix} A_1&\cdots&A_r\end{bmatrix}^*$ is ucp.
\end{proof}
In the conditions of \cref{theorem: arveson characterization of matricial range}, the radius of $\WW_n(T)$ is actually $\|T\|$ for all $n\geq2$. For $n=1$, it is well-known that the radius could be $\tfrac12\,\|T\|$ (as is the case when $T=E_{21}$). Concretely, define
\[
\nu_n(T)=\sup\{\|X\|:\ X\in \WW_n(T)\}.
\]
\begin{proposition}[Smith--Ward \cite{smith--ward1980}]\label{proposition: radius of the matricial range}
Let $T\in B(H)$. Then $\nu_n(T)=\|T\|$ for all $n\geq2$.

\end{proposition}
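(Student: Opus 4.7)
The inequality $\nu_n(T) \le \|T\|$ is immediate since every ucp map is contractive, so $\|\phi(T)\| \le \|T\|$ for every ucp map $\phi: B(H) \to M_n(\CC)$. My plan for the reverse inequality is first to reduce to the case $n = 2$, and then to construct the relevant element of $\WW_2(T)$ by hand.

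For the reduction, I will produce an isometric ucp embedding $\iota: M_2(\CC) \to M_n(\CC)$ for each $n \ge 2$. Fix any state $\tau$ on $M_2(\CC)$ and define $\iota(X) = X \oplus \tau(X) I_{n-2}$. This is unital, completely positive (sum of the compression $X \mapsto X \oplus 0$ and the CP map $X \mapsto \tau(X)(0 \oplus I_{n-2})$), and satisfies $\|\iota(X)\| = \max(\|X\|, |\tau(X)|) = \|X\|$. Given any $X \in \WW_2(T)$, composing $\iota$ with a ucp realizer of $X$ yields $\iota(X) \in \WW_n(T)$ of the same norm, so $\nu_n(T) \ge \nu_2(T)$.

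For the case $n = 2$: if $\dim H = 1$ the result is trivial, since the only ucp map $\CC \to M_2(\CC)$ is $\lambda \mapsto \lambda I_2$. Assuming $\dim H \ge 2$, given $\varepsilon > 0$, I pick a unit vector $\xi \in H$ with $\|T\xi\| > \|T\| - \varepsilon$, and build an isometry $V: \CC^2 \to H$ whose range contains both $\xi$ and $T\xi$: set $Ve_1 = \xi$ and take $Ve_2$ to be the Gram--Schmidt completion of $T\xi$ against $\xi$ (or any unit vector orthogonal to $\xi$ if $T\xi$ happens to be proportional to $\xi$). Since $T\xi$ lies in the range of $V$ and $V^*$ is an isometry on that range,
\[
\|V^*TV\| \ge \|V^*TVe_1\| = \|V^*T\xi\| = \|T\xi\| > \|T\| - \varepsilon.
\]
Thus $V^*TV \in \WW_2^s(T) \subset \WW_2(T)$ has norm at least $\|T\| - \varepsilon$, and letting $\varepsilon \to 0$ yields $\nu_2(T) \ge \|T\|$.

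The one delicate moment I anticipate is the degenerate configuration where $\xi$ is an approximate eigenvector of $T$, so that $T\xi$ is already proportional to $\xi$. The construction absorbs this case by drawing $Ve_2$ arbitrarily from the orthogonal complement of $\xi$; the crucial identity $\|V^*T\xi\| = \|T\xi\|$ survives verbatim because $T\xi$ still sits in the range of $V$.
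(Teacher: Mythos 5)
Your proof is correct and is essentially the paper's argument: both obtain the lower bound by compressing $T$ to a low-dimensional subspace containing a unit vector $\xi$ and $T\xi$, the only difference being that you first reduce to $n=2$ via the unital completely positive embedding $X\mapsto X\oplus\tau(X)I_{n-2}$ while the paper compresses directly onto an $n$-dimensional subspace $H_0\ni \xi, T\xi$. Your version is in fact slightly more careful, since the reduction step and the explicit handling of the degenerate cases ($\dim H=1$, or $T\xi$ proportional to $\xi$) cover situations that the paper's phrasing quietly assumes away (e.g.\ $\dim H\geq n$).
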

\begin{proof}
For any ucp map $\phi:B(H)\to M_n(\CC)$, we have $\|\phi(T)\|\leq\|T\|$, so $\nu_n(T)\leq\|T\|$. Conversely, fix $\varepsilon>0$. Choose $\xi\in H$ such that $\|\xi\|=1$ and $\|T\xi\|>\|T\|-\varepsilon$. Let $H_0$ be an $n$-dimensional subspace of $H$ that contains $\xi$ and $T\xi$. We define $\psi:B(H)\to M_n(\CC)$ by $\psi(L)=P_{H_0}LP_{H_0}$ (with the usual identification $P_{H_0}B(H)P_{H_0}\simeq M_n(\CC)$). The map $\psi$ is ucp, and

\[
\nu_n(T)\geq\|\psi(T)\|=\|P_{H_0}TP_{H_0}\|\geq\|T\xi\|>\|T\|-\varepsilon.
\]
As $\varepsilon$ was arbitrary, we get $\nu_n(T)=\|T\|$.
\end{proof}

The family of examples where $\WW(T)$ can be found explicitly is fairly small. The most notable example is $\WW(E_{21})$, as will be established independently in Corollaries \labelcref{corollary: matricial range of the 2x2 shift,corollary: matricial range of E_21}. A small generalization, to quadratic operators, is considered in \cite{TsoWu1999}. As could be expected, though, the case of normal operators is not hard.
\begin{proposition}\label{proposition: matricial range of normal}
Let $T\in B(H)$ be normal, and $n\in\NN$. Then
\[
\WW_n(T)=\overline{\left\{\sum_{j=1}^k\lambda_jH_j: k\in\NN, H_j\geq0,\ \lambda_j\in\sigma(T), \sum_jH_j=I_n \right\} }.
\]
\end{proposition}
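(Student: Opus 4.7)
The plan is to prove both inclusions, with $\supseteq$ following from C$^*$-convexity and $\subseteq$ from the fact that $T$ normal makes any ucp image of $T$ correspond to an $M_n(\CC)$-valued measure on $\sigma(T)$.

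For $\supseteq$, first I would note that $\sigma(T)\subseteq\WW_1(T)$: any $\lambda\in\sigma(T)$ is an approximate eigenvalue of the normal $T$, so unit vectors $\xi_m$ give $\langle T\xi_m,\xi_m\rangle\to\lambda$, and since $\WW_1(T)$ is weak$^*$-closed it contains $\lambda$. Pick a state $f_\lambda$ on $\oss{T}$ with $f_\lambda(T)=\lambda$; then $X\mapsto f_\lambda(X)\,I_n$ is a ucp map into $M_n(\CC)$, showing $\lambda I_n\in\WW_n(T)$. Now, given $\lambda_1,\ldots,\lambda_k\in\sigma(T)$ and $H_j\geq0$ with $\sum H_j=I_n$, set $A_j:=H_j^{1/2}$; then $\sum A_j^*A_j=I_n$ and
\[
\sum_j A_j^*(\lambda_j I_n)A_j=\sum_j\lambda_j H_j
\]
is a C$^*$-convex combination of elements of $\WW_n(T)$, hence lies in $\WW_n(T)$ by property \eqref{theorem: arveson characterization of matricial range:2:2} of \cref{theorem: arveson characterization of matricial range}. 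Since $\WW_n(T)$ is closed, its closure of the set of such finite sums is still contained in $\WW_n(T)$.

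For $\subseteq$, fix $X=\phi(T)$ with $\phi:B(H)\to M_n(\CC)$ ucp. Because $T$ is normal, $C^*(T,I)$ is commutative and canonically isomorphic to $C(\sigma(T))$ via the continuous functional calculus; restriction gives a ucp $\psi:C(\sigma(T))\to M_n(\CC)$ with $\psi(\id)=X$, where $\id(z)=z$. A Stinespring dilation $\psi(f)=V^*\pi(f)V$ combined with the spectral theorem for the commutative $*$-representation $\pi$ produces a positive $M_n(\CC)$-valued Borel measure $F$ on $\sigma(T)$---namely $F(\cdot):=V^*E(\cdot)V$, with $E$ the spectral measure of $\pi$---satisfying $F(\sigma(T))=I_n$ and $\psi(f)=\int f\,dF$ for all bounded Borel $f$. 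Given $\varepsilon>0$, partition $\sigma(T)$ into finitely many nonempty Borel sets $B_1,\ldots,B_k$ of diameter at most $\varepsilon$, choose $\lambda_j\in B_j$, and set $H_j:=F(B_j)\geq0$. Then $\sum H_j=I_n$, and
\[
\Big\|X-\sum_j\lambda_j H_j\Big\|=\Big\|\int\Big(z-\sum_j\lambda_j\chi_{B_j}(z)\Big)\,dF(z)\Big\|\leq\varepsilon,
\]
using that the Borel extension of $\psi$ remains unital and completely contractive. Letting $\varepsilon\to0$ exhibits $X$ in the prescribed closure.

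The step likeliest to require care is the passage from the ucp map $\psi$ on the commutative C$^*$-algebra $C(\sigma(T))$ to an honest $M_n(\CC)$-valued measure on $\sigma(T)$, and the subsequent Riemann-sum norm estimate. These rely on Stinespring plus the spectral theorem, which are entirely standard but not developed in the excerpt; with them in hand, everything else is routine bookkeeping.
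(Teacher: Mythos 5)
Your proof is correct, but it diverges from the paper's argument in a way worth noting, chiefly in the inclusion $\subseteq$. For $\supseteq$ the two arguments are essentially the same in substance: the paper directly defines the unital positive map $X\longmapsto\sum_jf_j(X)H_j$ with $f_j$ characters of $C(\sigma(T))$ and invokes complete positivity of positive maps on abelian domains, whereas you first place each $\lambda_jI_n$ in $\WW_n(T)$ and then assemble $\sum_j\lambda_jH_j$ by C$^*$-convexity via $A_j=H_j^{1/2}$; unwinding your C$^*$-convex combination recovers exactly the paper's map, so this is a repackaging rather than a new idea. For $\subseteq$, however, the routes genuinely differ. The paper stays entirely inside $B(H)$: by the Spectral Theorem it approximates $T$ in norm by $\sum_j\lambda_jP_j$ with $P_j$ projections summing to $I$ and $\lambda_j\in\sigma(T)$, then applies the contractive map $\phi$ to get $\|\phi(T)-\sum_j\lambda_j\phi(P_j)\|<\varepsilon$ with $H_j=\phi(P_j)$. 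You instead dilate $\phi|_{C(\sigma(T))}$ via Stinespring, extract the positive operator-valued measure $F(\cdot)=V^*E(\cdot)V$, and run a Riemann-sum estimate. Both are valid; the paper's version is more elementary (no Stinespring, no Borel functional calculus, no appeal to the contractivity of the Borel extension of $\psi$, which you correctly flag as the step needing care), while yours buys the cleaner structural statement $\phi(T)=\int z\,dF(z)$, i.e., the identification of $\WW_n(T)$ with barycenters of $M_n(\CC)$-valued probability measures on $\sigma(T)$. If you want to keep your POVM route, you should justify the inequality $\|\int g\,dF\|\leq\|g\|_\infty$ for bounded Borel $g$ (it follows from $\int g\,dF=V^*\tilde\pi(g)V$ with $\tilde\pi$ the normal extension of $\pi$ to the generated von Neumann algebra), but be aware the paper avoids this machinery altogether.
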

\begin{proof}
Since $T$ is normal, we can identify $C^*(T)$ with $C(\sigma(T))$. Given any decomposition $\sum_{j=1}^k\lambda_jH_j$ as above, we can find positive linear functionals $f_j$ (characters, actually) with $f_j(T)=\lambda_j$. The map $\psi:X\longmapsto \sum_{j=1}^k f_j(X)H_j$ is a unital positive linear map $C^*(T)\to M_n(\CC)$. As the domain is abelian, $\psi$ is completely positive \cite[Theorems 3.9 and 3.11]{Paulsen-book}. This shows the inclusion $\supset$ above.
Conversely, let $\phi:B(H)\to M_n(\CC)$ be ucp. Fix $\varepsilon>0$. By the Spectral Theorem, we can find projections $P_1,\ldots,P_k\in B(H)$ with $\sum_jP_j=I$ and $\lambda_1\,\ldots,\lambda_k\in\sigma(T)$ with $\|T-\sum_j\lambda_jP_j\|<\varepsilon$. Then
\[
\|\phi(T)-\sum_j\lambda_j \phi(P_j)\|=\|\phi(T-\sum_j\lambda_jP_j)\|<\varepsilon.
\]
As we can do this for each $\varepsilon>0$, we have shown that $\phi(T)$ is a limit of matrices of the form $\sum_j\lambda_j H_j$ as above.
\end{proof}

We continue with another of Arveson's gems from\cite{arveson1972}. Given $T\in B(H)$, $S\in B(K)$, we say that $S$ is a \emph{compression} of $T$ if there exists a projection $P\in B(H)$ such that $S$ is unitarily equivalent to $PT|_{PH}\in B(PH)$. This definition agrees with the usual use of the word ``compression'' or ``corner'', but it is important to emphasize the restriction aspect: for instance, in $M_2(\mathbb C)$ the projection $E_{11}$ is not a compression of $I_2$ in the above sense; or, for another example, the matrix unit $E_{11}\in M_2(\mathbb C)$ is a compression of $E_{11}\in M_3(\mathbb C)$, but not viceversa. The important thing to note is that the unitary implementing the unitary equivalence maps $K$ onto $PH$.

\begin{theorem}[Arveson]\label{theorem: Arveson metric characterization of inclusion of matricial range}
Let $T\in B(H)$, $S\in B(K)$. The following statements are equivalent:
\begin{enumerate}
\item\label{theorem: Arveson metric characterization of inclusion of matricial range:1} $\WW_n(S)\subset \WW_n(T)$ for all $n\in\NN$;
\item\label{theorem: Arveson metric characterization of inclusion of matricial range:2} for each $n\in\NN$ and $A,B\in M_n(\CC)$, \begin{equation}\label{equation: Arveson metric characterization:1}\|A\otimes I+B\otimes S\|\leq\| A\otimes I+B\otimes T\|;\end{equation}
\item\label{theorem: Arveson metric characterization of inclusion of matricial range:3}for each finite-dimensional projection $P\in B(K)$, there exists a $*$-representation $\pi:C^*(T)\to B(H_\pi)$ such that $PS|_{PK}\in B(PK)$ is unitarily equivalent to a compression of $\pi(T)$;
\item\label{theorem: Arveson metric characterization of inclusion of matricial range:4}there exists a $*$-representation $\pi:C^*(T)\to B(H_\pi)$ such that $S$ is a compression of $\pi(T)$.
\end{enumerate}
Moreover,
\begin{enumerate}[resume]
\item\label{theorem: Arveson metric characterization of inclusion of matricial range:5} When $S$ is normal, the above conditions are equivalent to $\sigma(S)\subset \WW_1(T)$;
\item\label{theorem: Arveson metric characterization of inclusion of matricial range:6} when $T$ is compact and irreducible, the above conditions are equivalent to $S$ being unitarily equivalent to a compression of $T\otimes I$.
\end{enumerate}
\end{theorem}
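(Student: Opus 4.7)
The plan is to establish $\eqref{theorem: Arveson metric characterization of inclusion of matricial range:1} \Leftrightarrow \eqref{theorem: Arveson metric characterization of inclusion of matricial range:2}$ together with the cycle $\eqref{theorem: Arveson metric characterization of inclusion of matricial range:1} \Rightarrow \eqref{theorem: Arveson metric characterization of inclusion of matricial range:3} \Rightarrow \eqref{theorem: Arveson metric characterization of inclusion of matricial range:4} \Rightarrow \eqref{theorem: Arveson metric characterization of inclusion of matricial range:1}$, and then handle the two supplementary items separately. For $\eqref{theorem: Arveson metric characterization of inclusion of matricial range:1} \Leftrightarrow \eqref{theorem: Arveson metric characterization of inclusion of matricial range:2}$, the key observation is that \eqref{theorem: Arveson metric characterization of inclusion of matricial range:2} is exactly the statement that the unital map $\tau:\spann\{I,T\}\to\spann\{I,S\}$ with $\tau(aI+bT)=aI+bS$ is completely contractive, since $\tau^{(n)}$ sends $A\otimes I+B\otimes T$ to $A\otimes I+B\otimes S$. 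The standard extension of a unital complete contraction from a subspace containing the identity to a ucp map on the generated operator system (Paulsen's lemma) then produces a ucp $\tilde\tau:\oss T\to\oss S$ with $\tilde\tau(T)=S$; composing with any ucp map $\oss S\to M_n(\CC)$ gives \eqref{theorem: Arveson metric characterization of inclusion of matricial range:1}. Conversely, assuming \eqref{theorem: Arveson metric characterization of inclusion of matricial range:1}, one writes $\|A\otimes I+B\otimes S\|$ as $\sup_P\|A\otimes I+B\otimes PSP|_{PK}\|$ over finite-rank projections $P$, and each $PSP|_{PK}\in\WW_{\rank P}^s(S)\subset\WW_{\rank P}(T)$ equals $\phi(T)$ for some ucp $\phi$, so complete contractivity of $\phi$ yields the required estimate.

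For $\eqref{theorem: Arveson metric characterization of inclusion of matricial range:1}\Rightarrow\eqref{theorem: Arveson metric characterization of inclusion of matricial range:3}$, take a rank-$n$ projection $P$; since $PSP|_{PK}\in\WW_n(T)$ equals $\phi(T)$ for some ucp $\phi$, a Stinespring dilation $\phi=V^*\pi V$ exhibits $PSP|_{PK}$ as unitarily equivalent to a compression of $\pi(T)$. The implication $\eqref{theorem: Arveson metric characterization of inclusion of matricial range:3}\Rightarrow\eqref{theorem: Arveson metric characterization of inclusion of matricial range:4}$ is the technical core: for each finite-rank $P\leq I_K$, Arveson-extend the ucp compression producing $PSP|_{PK}$ and pad with a fixed state times the identity on $(I-P)K$ to obtain a ucp $\Phi_P:B(H)\to B(K)$ with $P\Phi_P(T)P=PSP$. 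The sets $\cF_P=\{\Phi:B(H)\to B(K)\text{ ucp},\ P\Phi(T)P=PSP\}$ are BW-closed, nonempty, and nested ($\cF_{P'}\subset\cF_P$ when $P'\geq P$); BW-compactness of the ucp maps $B(H)\to B(K)$ forces $\bigcap_P\cF_P\neq\emptyset$, and any $\Phi$ in the intersection satisfies $\Phi(T)=S$ (take $P\to I_K$ strongly). Stinespring for $\Phi$ then supplies \eqref{theorem: Arveson metric characterization of inclusion of matricial range:4}. Finally, $\eqref{theorem: Arveson metric characterization of inclusion of matricial range:4}\Rightarrow\eqref{theorem: Arveson metric characterization of inclusion of matricial range:1}$ is immediate: $X\mapsto V^*\pi(X)V$ is ucp $C^*(T)\to B(K)$ sending $T$ to $S$, and composition with any ucp $\psi:\oss S\to M_n(\CC)$ (extended to $B(K)$ via Arveson) puts $\psi(S)$ into $\WW_n(T)$.

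For \eqref{theorem: Arveson metric characterization of inclusion of matricial range:5}, the containment $\sigma(S)\subset\WW_1(S)\subset\WW_1(T)$ is automatic for normal $S$ assuming \eqref{theorem: Arveson metric characterization of inclusion of matricial range:1}; conversely, given $\sigma(S)\subset\WW_1(T)$, \cref{proposition: matricial range of normal} writes each element of $\WW_n(S)$ as a limit of sums $\sum_j\lambda_jH_j$ with $\lambda_j\in\sigma(S)$, and selecting states $f_j$ on $C^*(T)$ with $f_j(T)=\lambda_j$ turns $X\mapsto\sum_jf_j(X)H_j$ into a ucp map delivering that element into $\WW_n(T)$. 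For \eqref{theorem: Arveson metric characterization of inclusion of matricial range:6}, the fact that $T$ is compact and irreducible forces $C^*(T)\supset K(H)$, so every representation $\pi$ of $C^*(T)$ decomposes as $\pi_1\oplus\pi_0$ with $\pi_1(X)=X\otimes I$ on $H\otimes H_1$ and $\pi_0|_{K(H)}=0$, giving $\pi_0(T)=0$. The Stinespring compression from \eqref{theorem: Arveson metric characterization of inclusion of matricial range:4} thus has the form $V^*((T\otimes I)\oplus 0)V$, and a short dilation embeds the non-isometric component of $V$ into a larger $H\otimes H'$ by means of a $(T\otimes I)$-isotropic subspace, producing an honest compression of $T\otimes I$.

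The main obstacle will be $\eqref{theorem: Arveson metric characterization of inclusion of matricial range:3}\Rightarrow\eqref{theorem: Arveson metric characterization of inclusion of matricial range:4}$: although each finite compression $PSP|_{PK}$ dilates to a representation of $C^*(T)$, these representations depend on $P$, so one must either glue them coherently or work globally at the level of ucp maps $B(H)\to B(K)$ and extract, by BW-compactness, a single $\Phi$ whose every finite $P$-corner agrees with $PSP$. A secondary nontrivial issue is \eqref{theorem: Arveson metric characterization of inclusion of matricial range:6}, where absorbing the trivial summand $\pi_0$ into a larger copy of $T\otimes I$ requires producing sufficiently many isotropic vectors for $T\otimes I$ (e.g.\ from $\ker T$, or from unit vectors witnessing $0\in\overline{W(T)}$), a step that does not follow formally from compactness and irreducibility.
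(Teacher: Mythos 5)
Your treatment of the equivalences \eqref{theorem: Arveson metric characterization of inclusion of matricial range:1}--\eqref{theorem: Arveson metric characterization of inclusion of matricial range:4} and of item \eqref{theorem: Arveson metric characterization of inclusion of matricial range:5} is correct and is, at its core, the paper's own argument: Paulsen's extension of the unital complete contraction $aI+bT\mapsto aI+bS$ followed by Arveson extension and Stinespring, and a BW-compactness gluing for \eqref{theorem: Arveson metric characterization of inclusion of matricial range:3}$\Rightarrow$\eqref{theorem: Arveson metric characterization of inclusion of matricial range:4} (your nested families $\cF_P$ and the finite intersection property are the same device as the paper's BW-cluster point of the net $\psi_j(X)=V_j^*Q_j\pi_j(X)|_{Q_jH_j}V_j+f(X)(I-P_j)$). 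Two of your steps genuinely differ in route: for \eqref{theorem: Arveson metric characterization of inclusion of matricial range:1}$\Rightarrow$\eqref{theorem: Arveson metric characterization of inclusion of matricial range:2} you compute $\|A\otimes I+B\otimes S\|$ as a supremum over finite-rank corners $PSP|_{PK}\in\WW_{\rank P}(T)$, whereas the paper first manufactures a single ucp map $\phi:\oss{T}\to B(K)$ with $\phi(T)=S$ (the cluster-point argument of \cref{proposition: el rango matricial alcanza}) and applies $\phi^{(n)}$; and for \eqref{theorem: Arveson metric characterization of inclusion of matricial range:5} you route through \cref{proposition: matricial range of normal}, while the paper verifies directly that the unital map $T\mapsto S$ is positive via the spectral characterization and then uses that positive maps into the abelian $C^*(S)$ are completely positive. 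Both variants are sound; yours trades the construction of an infinite-dimensional ucp map for a corner approximation, at no real cost.

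The gap you flag in \eqref{theorem: Arveson metric characterization of inclusion of matricial range:6} is genuine, and it cannot be closed by producing isotropic vectors, because they need not exist. Your reduction is right: any representation $\pi$ of $C^*(T)\supset K(H)$ splits as $\pi_1\oplus\pi_0$ with $\pi_1(X)=X\otimes I$ and $\pi_0|_{K(H)}=0$, so $\pi(T)=(T\otimes I)\oplus 0$, and converting $S=V^*\bigl((T\otimes I)\oplus 0\bigr)V$ into an honest compression of $T\otimes I_{H'}$ requires a subspace of $H\otimes H'$ on which $T\otimes I$ compresses to $0$, i.e.\ requires $0\in W(T)$ (note $W(T\otimes I)=\conv W(T)=W(T)$). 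But there are compact irreducible operators with $0\notin W(T)$: take $T=D+iN$ on $\ell^2(\NN)$ with $D=\mathrm{diag}(1,\tfrac12,\tfrac13,\dots)$ and $N$ a self-adjoint compact tridiagonal operator with nonzero off-diagonal entries. Then $T$ is compact and irreducible, yet $\re\langle (T\otimes I)\zeta,\zeta\rangle=\langle (D\otimes I)\zeta,\zeta\rangle>0$ for every unit vector $\zeta$, so $T\otimes I_{H'}$ has no isotropic vectors for any $H'$. Meanwhile $0\in\sigma(T)\subset\WW_1(T)$, so $S=0$ on $K=\CC$ satisfies $\WW_n(S)=\{0_n\}\subset\WW_n(T)$ for all $n$, i.e.\ conditions \eqref{theorem: Arveson metric characterization of inclusion of matricial range:1}--\eqref{theorem: Arveson metric characterization of inclusion of matricial range:4} hold, but $S$ is not unitarily equivalent to a compression of $T\otimes I$. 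So the difficulty you isolate is not a missing lemma but an actual obstruction to the unrestricted statement.

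For what it is worth, the paper's own proof of \eqref{theorem: Arveson metric characterization of inclusion of matricial range:6} passes over exactly this point: it asserts that $\pi$ must be nonzero on $T$ (false for $S=0$ in the example above, where the only available $\pi$ factors through $C^*(T)/K(H)$) and that a faithful $\pi$ sends $T$ to something unitarily equivalent to $T\otimes I$ (true only when $\pi|_{K(H)}$ is nondegenerate, i.e.\ when the summand $\pi_0$ is absent). The statement becomes correct if one either assumes the Stinespring representation restricts nondegenerately to $K(H)$, or adds a hypothesis forcing this --- for instance the situation of \cref{corollary: same matricial range}, where $S$ is also irreducible with $C^*(S)\cap K(K)\neq\{0\}$, which is the setting actually used downstream. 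You should either add such a hypothesis or record the counterexample; as written, neither your argument nor an appeal to "well-known" facts about representations of algebras containing the compacts will finish item \eqref{theorem: Arveson metric characterization of inclusion of matricial range:6}.
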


\begin{proof}
\eqref{theorem: Arveson metric characterization of inclusion of matricial range:1}$\implies$\eqref{theorem: Arveson metric characterization of inclusion of matricial range:2} By repeating the argument in the first paragraph of the proof of \cref{proposition: el rango matricial alcanza}, we can get a ucp map $\phi:\oss{T}\to B(K)$ with $\phi(T)=S$.
Then, for any $A,B\in M_n(\CC)$,

\begin{align*}
\|A\otimes I_K+B\otimes S\|&=\|A\otimes \phi(I_H)+B\otimes \phi(T)\|=\|\phi^{(n)}(A\otimes I_H+B\otimes T)\| \abajo
&\leq\|A\otimes I_H+B\otimes T\|.
\end{align*}
\eqref{theorem: Arveson metric characterization of inclusion of matricial range:2}$\implies$\eqref{theorem: Arveson metric characterization of inclusion of matricial range:3} Define a linear map $\phi:\spann\{I_H,T\}\to\spann\{I_K,S\}$ by $\phi(I_H)=I_k$, $\phi(T)=S$. The condition \eqref{equation: Arveson metric characterization:1} now says that $\phi$ is completely contractive. By \cite[Proposition 3.5]{Paulsen-book}, $\phi$ extends to a ucp map $\phi:\spann\{I_H,T,T^*\}\to B(K)$, and by Arveson's Extension Theorem we may enlarge the domain of $\phi$ to be $B(H)$. Consider a Stinespring Dilation  $\phi=P_K\pi|_K$, where $\pi:B(H)\to B(K_\pi)$ is a representation and $K\subset K_\pi$.  For any projection $P\in B(K)$, since $PK\subset K=P_KK_\pi$,
\[
PS|_{PK}=P\,P_KS|_{PK}=P\,P_K\pi(T)|_{PK}=P\pi(T)|_{PK}.
\]

\eqref{theorem: Arveson metric characterization of inclusion of matricial range:3}$\implies$\eqref{theorem: Arveson metric characterization of inclusion of matricial range:4} Let $\{P_j\}\subset B(K)$ be an increasing net of finite-rank projections that converges strongly to $I$. By hypothesis, for each $j$ there exist a unitary $V_j:P_jK\to Q_jH_j$, a projection $Q_j\in B(H_j)$, and a representation $\pi_j:C^*(T)\to B(H_j)$ such that
\[
P_jS|_{PjK}=V_j^*Q_j\pi_j(T)|_{Q_jH_j}\,V_j.
\]
Fix a state $f$ of $C^*(T)$. Then the maps $\psi_j:C^*(T)\to B(K)$ given by
\[
\psi_j(X)=V_j^*Q_j\pi_j(X)|_{Q_jH_j}\,V_j+f(X)\,(I-P_j)
\]
are ucp. Let $\psi:C^*(T)\to B(K)$ be a BW-cluster point of the net $\{\psi_j\}$. It is clear that $\psi(T)=S$. Now a Stinespring decomposition of $\psi$ gives $S$ as a compression of $T$.

\eqref{theorem: Arveson metric characterization of inclusion of matricial range:4}$\implies$\eqref{theorem: Arveson metric characterization of inclusion of matricial range:1} By hypothesis, there is a ucp map $\psi$ with $S=\psi(T)$. Given any $X\in\WW_n(S)$, there exists a ucp map $\phi$ with $\phi(S)=X$. Then
\[
X=\phi(S)=\phi\circ\psi(T)\in \WW_n(T).
\]

\eqref{theorem: Arveson metric characterization of inclusion of matricial range:5} Assume $S$ is normal. If $\sigma(S)\subset \WW_1(T)$, then  $\WW_1(S)\subset \WW_1(T)$ since $\WW_1(S)$ is the closed convex hull of $\sigma(S)$. This allows us to show that the unital map  $\psi:\oss{T}\to\oss{S}$ with $\psi(T)=S$, $\psi(T^*)=S^*$ is positive: indeed, if $\alpha I+\beta T+\gamma T^*\geq0$ then $\WW_1(\alpha I+\beta T+\gamma T^*)\subset[0,\infty)$ and, as
\begin{align*}
\sigma(\alpha I+\beta S+\gamma S^*) & = \{\alpha+\beta\lambda+\gamma\bar\lambda:\ \lambda\in \sigma(S)\} \abajo
&\subset \{\alpha+\beta\lambda+\gamma\bar\lambda:\ \lambda\in \WW_1(T)\} \abajo
&=\{f(\alpha I+\beta T+\gamma T^*):\ f\ \text{ state}\}
\subset[0,\infty)
\end{align*}
we obtain that $\alpha I+\beta S+\gamma S^*\geq0$ (here we use that $S$ is normal to characterize positivity by its spectrum, and also for the first equality above). So $\psi$ is a unital positive map; as $C^*(S)$ is abelian, $\psi$ is ucp. Now we can use this $\psi$ to check that \eqref{theorem: Arveson metric characterization of inclusion of matricial range:2} or \eqref{theorem: Arveson metric characterization of inclusion of matricial range:4} hold. Conversely, if the equivalent conditions hold, we have $\WW_1(S)
\subset\WW_1(T)$, and so $\sigma(S)\subset\WW_1(S)\subset\WW_1(T)$.

\eqref{theorem: Arveson metric characterization of inclusion of matricial range:6}When $T$ is compact and
\eqref{theorem: Arveson metric characterization of inclusion of matricial range:4} holds, the representation $\pi$ is nonzero on $T$, and so it has to be isometric on $C^*(T)$---as the only possible kernel is $K(H)$. Because $T$ is irreducible, $K(H)\subset C^*(T)$ (this is a more or less straightforward consequence of Kadison's Transitivity Theorem; see \cite[Theorem I.10.4]{Davidson-book}). So $\pi$ is isometric on $B(H)$.  It is well-known   that in this situation $\pi(T)$ is unitarily equivalent to $T\otimes I$. Conversely, if $S$ is unitarily equivalent to a compression of $T\otimes I$, we can recover \eqref{theorem: Arveson metric characterization of inclusion of matricial range:4}.
\end{proof}

It was proven by Hamana \cite{hamana1979b} (see \cite{davidson--kennedy2013} for a bit of history and a proof within Arveson's framework) that every operator system admits a C$^*$-envelope. That is, given an operator system $\osss$, there exists a C$^*$-algebra $\cA$ and a complete isometry $j:\osss\to \cA$ such that for any C$^*$-algebra $B$ and any complete isometry $\psi:\osss\to B$, there exists a C$^*$-epimorphism $\pi:C^*(\psi(\osss))\to \cA$ with $\pi\circ\psi=j$. That is, the following diagram commutes:
\begin{equation}\label{equation: c-star envelope}
\xymatrix{ \osss \ar[rr]^{\psi} \ar[drr]_j & & C^*(\psi(\osss)) \\
& &\cA\ar@{{}{ }{}}[u]_{\phantom{\pi\text{ epimorphism}}}{\ar@{<--}[u]_{\pi\text{ epimorphism}}}}
\end{equation}
It is straightforward to prove that for a given operator system $\osss$ the C$^*$-algebra $\cA$ above is determined up to isomorphism, and so one denotes it by $C^*_e(\osss)$ and names it \emph{the C$^*$-envelope of $\osss$}. By taking the quotient by the kernel of $\pi$ (the \v Silov ideal, in Arveson's terminology), we always have that $C^*_e(T)$ is a quotient of $C^*(T)$. In the particular case where an operator system $\oss{T}\subset B(H)$ has the property that $\pi$ has trivial kernel (so, it is isometric), we say that $T$ is \emph{first order} (this was Arveson's original terminology; in later years he used the word   \emph{reduced}).

Let us now draw some consequences from Arveson's result.

\begin{corollary}\label{corollary: same matricial range}
Let $T\in B(H)$, $S\in B(K)$. The following statements are equivalent:

\begin{enumerate}
\item\label{corollary: same matricial range:1} $\WW(S)=\WW(T)$;
\item\label{corollary: same matricial range:3} for all $n\in\NN$, for any $A,B\in M_n(\CC)$, $\|A\otimes I+B\otimes T\|=\|A\otimes I+B\otimes S\|$;
\item\label{corollary: same matricial range:4} $\oss{S}\simeq\oss{T}$ via a complete isometry $\phi$ with $\phi(S)=T$.

\end{enumerate}
If both $S,T$ are irreducible, first order,  and both $C^*(S)$ and $C^*(T)$ contain a nonzero compact operator, then the above statements are also equivalent to
\begin{enumerate}[resume]
\item\label{corollary: same matricial range:5} $S$ and $T$ are unitarily equivalent.
\end{enumerate}

\end{corollary}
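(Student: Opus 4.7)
The plan is to cycle $(1)\Rightarrow(4)\Rightarrow(3)\Rightarrow(1)$ among the three base conditions, and then handle $(5)$ separately under the additional hypotheses. For $(1)\Rightarrow(4)$, I would apply \cref{theorem: Arveson metric characterization of inclusion of matricial range} to each of the inclusions $\WW_n(S)\subset\WW_n(T)$ and $\WW_n(T)\subset\WW_n(S)$, producing (after restriction to the operator systems) ucp maps $\phi_1\colon\oss{T}\to\oss{S}$ with $\phi_1(T)=S$ and $\phi_2\colon\oss{S}\to\oss{T}$ with $\phi_2(S)=T$. Since ucp maps on operator systems are unital and selfadjoint, $\phi_2\circ\phi_1$ agrees with $\id$ on the generating set $\{I,T,T^*\}$ and hence on all of $\oss{T}$; symmetrically, $\phi_1\circ\phi_2=\id_{\oss{S}}$. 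Two mutually inverse complete contractions must be complete isometries, which produces the $\phi$ required by $(4)$. The step $(4)\Rightarrow(3)$ is immediate upon applying $\phi^{(n)}$ to $A\otimes I+B\otimes T$, and $(3)\Rightarrow(1)$ re-packages the equality of norms as the two inequalities required by item $(2)$ of \cref{theorem: Arveson metric characterization of inclusion of matricial range} in each direction.

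The implication $(5)\Rightarrow(1)$ is immediate since conjugation by a unitary preserves matricial ranges. For $(4)\Rightarrow(5)$ under the extra hypotheses, the plan is to lift $\phi$ to a $*$-isomorphism of $C^*$-algebras and then implement it spatially. Because $S$ and $T$ are first order, the inclusions $\oss{S}\subset C^*(S)$ and $\oss{T}\subset C^*(T)$ realize the respective $C^*$-envelopes; applying the universal property \eqref{equation: c-star envelope} to $\phi$ and to $\phi^{-1}$ yields mutually inverse $*$-epimorphisms, hence a $*$-isomorphism $\tilde\phi\colon C^*(S)\to C^*(T)$ with $\tilde\phi(S)=T$. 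The irreducibility of $T$ together with $C^*(T)\cap K(H)\neq\{0\}$ forces $K(H)\subset C^*(T)$, as invoked in the proof of item $(6)$ of \cref{theorem: Arveson metric characterization of inclusion of matricial range}; likewise $K(K)\subset C^*(S)$. A standard ideal argument shows that $K(H)$ and $K(K)$ are the unique minimal nonzero closed two-sided ideals in their respective algebras, so $\tilde\phi$ must restrict to a $*$-isomorphism $K(K)\to K(H)$, which is implemented by some unitary $U\colon K\to H$. Finally, for any $A\in C^*(S)$ and any rank-one $X\in K(K)$, multiplicativity of $\tilde\phi$ gives $\tilde\phi(A)\,UXU^*=U(AX)U^*$; varying $X$ over a spanning family of rank-one operators yields $\tilde\phi(A)=UAU^*$ throughout $C^*(S)$, and in particular $T=USU^*$.

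The main obstacle sits in the last paragraph: one must justify that $\tilde\phi$ carries $K(K)$ onto $K(H)$ (which reduces to the observation that every nonzero closed two-sided ideal of an irreducible $C^*$-subalgebra of $B(H)$ containing $K(H)$ must itself contain $K(H)$, since otherwise an element of such an ideal would annihilate the ranges of all rank-one operators in $K(H)$), and that the unitary $U$ implementing $\tilde\phi$ on the compacts automatically implements it throughout $C^*(S)$, via the small multiplicative computation indicated above. The rest of the proof consists of straightforward assembly of results already in hand.
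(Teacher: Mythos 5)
Your proposal is correct and takes essentially the same route as the paper: the equivalence of the first three conditions is obtained by shuttling through \cref{theorem: Arveson metric characterization of inclusion of matricial range} (the mutually inverse ucp maps $\phi_1,\phi_2$ you build are exactly how one upgrades the two inclusions to a unital complete isometry), and \eqref{corollary: same matricial range:4}$\implies$\eqref{corollary: same matricial range:5} proceeds, as in the paper, by passing to the C$^*$-envelopes to get a $*$-isomorphism $C^*(S)\to C^*(T)$ sending $S$ to $T$, showing it carries $K(K)$ onto $K(H)$, and implementing it spatially. The only difference is cosmetic: where you invoke the standard fact that a $*$-isomorphism between algebras of compact operators is unitarily implemented (via the unique-minimal-ideal argument), the paper constructs the unitary $U$ by hand from rank-one projections and then extends the implementation to all of $C^*(S)$ by the same multiplicative computation you indicate.
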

\begin{proof}
The equivalences \eqref{corollary: same matricial range:1}$\iff$\eqref{corollary: same matricial range:3}$\iff$ \eqref{corollary: same matricial range:4} follow directly from \cref{theorem: Arveson metric characterization of inclusion of matricial range}. The implication \eqref{corollary: same matricial range:5}$\implies$\eqref{corollary: same matricial range:4} is trivial.
So assume that $S,T$ are irreducible, first order,  that their C$^*$-algebras contain nonzero compact operators, and that there exists a complete isometry $\phi:C^*(T)\to C^*(S)$ with $\phi(T)=S$.

Considering the diagram \eqref{equation: c-star envelope} for  $\osss=\oss{S}$ and $\psi=\phi^{-1}$, and for $\osss=\oss{T}$ and $\psi=\phi$ respectively, one deduces that $C^*_e(T)\simeq C^*_e(S)$ via an isomorphism $\pi$ with $\pi(T)=S$. As $T$ is irreducible and $C^*(T)$ contains a compact operator, it follows that $K(H)\subset C^*(T)$ (as mentioned above, see \cite[Corollary I.10.4]{Davidson-book}). Similarly, $C^*(S)$ contains all compacts of $ B(K)$. Recall that we are assuming that both $T$ and $S$ are first order, so $C^*(T)=C^*_e(T)$ and $C^*(S)=C^*_e(S)$.

Because $\pi$ is an irreducible representation of $C^*(T)$ and $J=K(H)\subset C^*(T)$ with $\pi|_{J}\ne0$, we have that $\pi|_J$ is irreducible. Indeed, let $\xi\in K$, and consider the subspace $\overline{\pi(J)\xi}\subset K$. Since $J$ is an ideal, $\overline{\pi(J)\xi}$ is invariant for $\pi(C^*(T))=C^*(S)$; as $C^*(S)$ is irreducible, it follows that $\overline{\pi(J)\xi}$ is $K$ or $0$. If it were $0$, we would have $\xi\in[\pi(J)K]^\perp$. But then $\overline{\pi(J)K}\subsetneq K$ and it is invariant for $C^*(S)$---which is irreducible---so $\pi(J)=0$, a contradiction. Thus $\overline{\pi(J)\xi}=K$ for all $\xi$, and so $\pi(J)$ has no reducing subspaces.

Because both the domain and the range of $\pi$ contain their respective compact operators, $\pi$ necessarily maps rank-one projections to rank-one projections. We will show that this implies that $\pi$ is implemented by unitary conjugation. Fix an orthonormal basis $\{\xi_j\}$ of $H$. Choose a unit vector $\eta_1\in \pi(\xi_1\otimes\xi_1)H$, and define
\[
\eta_j=\pi(\xi_j\otimes\xi_1)\eta_1.
\]
One then checks easily that $\{\eta_j\}$ is orthonormal; and it has to be a basis, because a rank-one projection corresponding to a vector orthogonal to $\{\eta_j\}$ would get brought back by $\pi$ to a rank-one projection with range orthogonal to $\{\xi_j\}$, an impossibility. Now define a unitary $U:H\to K$ by
\[
U\xi_j=\eta_j.
\]
Then
\begin{align*}
U^*\pi(\xi_j\otimes\xi_k)U\xi_\ell
&=U^*\pi(\xi_j\otimes\xi_k)\eta_\ell
=U^*\pi(\xi_j\otimes\xi_k)\pi(\xi_\ell\otimes\xi_1)\eta_1 \abajo
&=\delta_{\ell,k}\,U^*\pi(\xi_j\otimes\xi_1)\eta_1 
=\delta_{\ell,k}\,U^*\eta_j=\delta_{\ell,k}\,\xi_j
=(\xi_j\otimes\xi_k)\xi_\ell.
\end{align*}
Thus $U^*\pi U$ is the identity on all rank-one operators; by linearity and continuity, it is the identity on all of $K(H)$. For an arbitrary $X\in C^*(T)$ and $\xi\in H$, let $P$ be the rank-one projection with $P\xi=\xi$. Then
\begin{align*}
U^*\pi(X)U\xi
&=U^*\pi(X)UP\xi=U^*\pi(X)UU^*\pi(P)U\xi \abajo 
&=U^*\pi(XP)U\xi=XP\xi=X\xi.
\end{align*}
So $U^*\pi(X)U=X$, that is $\pi(X)=UXU^*$ for all $X\in C^*(T)$. In particular,
\[
S=\pi(T)=UTU^*. \qedhere
\]
\end{proof}

\begin{remark}
The requirement in \eqref{corollary: same matricial range:4} above that $\phi(S)=T$ cannot be relaxed. For example, with
\[
S=\begin{bmatrix} 1&0\\0&0\end{bmatrix},\ \ \ \ T=\begin{bmatrix}1&0\\0&2\end{bmatrix},
\]
we have $\oss{S}=\oss{T}$, but $\WW_1(S)=[0,1]$ and $\WW_1(T)=[1,2]$. Also, we refer to \cref{corollary: matricial range of the unilateral shift} for examples of operators with the same matricial range but very far from unitarily equivalent.

\end{remark}

\begin{remark}
The conditions in \cref{corollary: same matricial range} do not imply the equality of the spatial matricial ranges $\WW^s(S)$ and $\WW^s(T)$. The equality $\WW^s(S)=\WW^s(T)$ implies $\WW(S)=\WW(T)$ by \cref{theorem: Bunce-Salinas spatial  matricial range}, but the converse is not true. For instance consider $H=K=\ell^2(\mathbb N)$, take $S$ to be the unilateral shift with respect to the canonical basis $\{\xi_k\}$, and let $T$ be the unitary given by $T\xi_k=\gamma_k\,\xi_k$, where $\{\gamma_k\}$ is a dense sequence in $\TT$. By \cref{corollary: matricial range of the unilateral shift} below, we have $\WW(S)=\WW(T)$. Any isometry $V:\CC^2\to H$ is given by $Ve_1=x$, $Ve_2=y$, where $\{x,y\}\subset H$ is orthonormal; we will write $V_{x,y}$ for such an isometry. It is not hard to check that, for any $R\in B(H)$,
\[
V_{x,y}^*RV_{x,y}=\begin{bmatrix} \langle x,Rx\rangle & \langle y,Rx\rangle \\ \langle x,Ry\rangle & \langle y,Ry\rangle \end{bmatrix}.
\]
By choosing the sequence $\{\gamma_k\}$ with $\gamma_1=\gamma_2=1$ and taking $x=\xi_1$, $y=\xi_2$, we get
\[
I_2=\begin{bmatrix} 1&0\\0&1\end{bmatrix}=V_{x,y}^*TV_{x,y}\in \WW_2^s(T).
\]
But, while $I_2\in\overline{\WW_2^s(S)}$, we have  $I_2\not\in\WW_2^s(S)$. Indeed, if we had $I_2=V^*_{x,y}SV_{x,y}$ for orthonormal $x,y\in H$, then $V_{x,y}V_{x,y}^*=P$, the orthogonal projection onto the span of $\{x,y\}$. So
\[
P=V_{x,y}V_{x,y}^*=V_{x,y}I_2V_{x,y}^*=V_{x,y}V_{x,y}^*SV_{x,y}V_{x,y}^*=PSP.
\]
This equality cannot hold, because we would have

\[
PS^*SP=P=PS^*PSP,
\]
which implies that $PS^*(I-P)SP=0$, and so $(I-P)SP=0$, from where $SP=PSP=P$. This would make $x$ and $y$ eigenvectors for $S$, a contradiction.
It is easy to see, on the other hand, that $I_2\in\overline{\WW_2^s(S)}$, so it is not clear at first sight whether $\overline{\WW_n^s(T)}\ne\overline{\WW_n^s(S)}$ or not.

\end{remark}

Let us now specialize the above result to the case of matrices. We mention \cite{farenick2011b} for a different and detailed proof of the equivalence \eqref{corollary: same matricial range for matrices:3} $\iff$\eqref{corollary: same matricial range for matrices:5} in  \cref{corollary: same matricial range for matrices} below.

\begin{corollary}\label{corollary: same matricial range for matrices}
Let $n\in\NN$ and $S,T\in M_n(\CC)$. The following statements are equivalent:

\begin{enumerate}
\item\label{corollary: same matricial range for matrices:1} $\WW(S)=\WW(T)$;
\item\label{corollary: same matricial range for matrices:2} $\WW_n(S)=\WW_n(T)$;
\item\label{corollary: same matricial range for matrices:3} for all $A,B\in M_n(\CC)$, $\|A\otimes I+B\otimes T\|=\|A\otimes I+B\otimes S\|$;
\item\label{corollary: same matricial range for matrices:4} $\oss{S}\simeq\oss{T}$ via a complete isometry $\phi$ with $\phi(T)=S$.
\end{enumerate}
If  both $S,T$ are irreducible, the above statements are also equivalent to
\begin{enumerate}[resume]
\item\label{corollary: same matricial range for matrices:5} $S$ and $T$ are unitarily equivalent.
\end{enumerate}
\end{corollary}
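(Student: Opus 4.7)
The plan is to leverage \cref{corollary: same matricial range} and handle only the two features where \cref{corollary: same matricial range for matrices} is sharper: condition \eqref{corollary: same matricial range for matrices:2} requests matricial-range equality only at level $n$, and condition \eqref{corollary: same matricial range for matrices:3} asks for norm equality only for $A,B$ of size $n$. The main obstacle will be the implication \eqref{corollary: same matricial range for matrices:3}$\Rightarrow$\eqref{corollary: same matricial range for matrices:4}: without the higher levels one needs Smith's theorem on completely bounded maps into $M_n(\CC)$ to upgrade $n$-isometry to complete isometry.

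The equivalence \eqref{corollary: same matricial range for matrices:1}$\Leftrightarrow$\eqref{corollary: same matricial range for matrices:4} and the implication \eqref{corollary: same matricial range for matrices:1}$\Rightarrow$\eqref{corollary: same matricial range for matrices:3} are already contained in \cref{corollary: same matricial range}, and \eqref{corollary: same matricial range for matrices:1}$\Rightarrow$\eqref{corollary: same matricial range for matrices:2} is immediate. For \eqref{corollary: same matricial range for matrices:2}$\Rightarrow$\eqref{corollary: same matricial range for matrices:1}, since $S,T\in M_n(\CC)$ the identity map of $M_n(\CC)$ exhibits $T\in\WW_n(T)=\WW_n(S)$ and $S\in\WW_n(S)=\WW_n(T)$, producing ucp maps $\alpha,\beta:M_n(\CC)\to M_n(\CC)$ with $\alpha(S)=T$ and $\beta(T)=S$. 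Composing an arbitrary ucp $\varphi:M_n(\CC)\to M_m(\CC)$ realizing an element of $\WW_m(T)$ with $\beta$ yields $\WW_m(T)\subset\WW_m(S)$, and the reverse inclusion is symmetric.

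For \eqref{corollary: same matricial range for matrices:3}$\Rightarrow$\eqref{corollary: same matricial range for matrices:4}, define $\phi_0:\spann\{I_n,T\}\to\spann\{I_n,S\}$ by $\phi_0(I_n)=I_n$ and $\phi_0(T)=S$. Condition \eqref{corollary: same matricial range for matrices:3} says precisely that the $n$-amplification $\phi_0^{(n)}$ is an isometry. Smith's theorem (\cite[Theorem 8.7]{Paulsen-book}) asserts that for any bounded linear map into $M_n(\CC)$ the cb-norm equals the $n$-norm. Applied to $\phi_0$ this gives $\|\phi_0\|_{cb}=1$; applied to $\phi_0^{-1}$ (whose $n$-amplification is $(\phi_0^{(n)})^{-1}$, again isometric) it gives $\|\phi_0^{-1}\|_{cb}=1$. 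Hence $\phi_0$ is a unital complete isometry, and by \cite[Proposition 3.5]{Paulsen-book} it extends to a ucp map $\phi:\oss{T}\to M_n(\CC)$ with $\phi(T^*)=\phi(T)^*=S^*$, so in fact $\phi:\oss{T}\to\oss{S}$. A symmetric construction produces a ucp map $\psi:\oss{S}\to\oss{T}$ with $\psi(S)=T$; the compositions $\psi\circ\phi$ and $\phi\circ\psi$ are ucp and equal the identity on the generators $\{I_n,T,T^*\}$ and $\{I_n,S,S^*\}$ respectively, so $\phi$ is a ucp bijection with ucp inverse, hence a complete isometry.

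For the final equivalence with \eqref{corollary: same matricial range for matrices:5} under irreducibility, one verifies that the hypotheses of \cref{corollary: same matricial range} for condition \eqref{corollary: same matricial range:5} are automatic for irreducible matrices: $C^*(S)=C^*(T)=M_n(\CC)$ is simple, so the \v Silov ideal is trivial and both $S$ and $T$ are first order; and every element of $M_n(\CC)$ is compact, so the requirement of nonzero compacts in $C^*(S)$ and $C^*(T)$ holds trivially. Thus \cref{corollary: same matricial range} directly delivers the unitary equivalence of $S$ and $T$.
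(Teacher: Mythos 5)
Your proof is correct and its skeleton matches the paper's: the equivalence of \eqref{corollary: same matricial range for matrices:1}, \eqref{corollary: same matricial range for matrices:4} and (under irreducibility) \eqref{corollary: same matricial range for matrices:5} is delegated to \cref{corollary: same matricial range}, with the same observations that simplicity of $M_n(\CC)$ forces irreducible matrices to be first order and that the compact-operator hypothesis is vacuous; and \eqref{corollary: same matricial range for matrices:2}$\implies$\eqref{corollary: same matricial range for matrices:1} is the identical composition trick, extracting ucp maps carrying $S$ to $T$ and $T$ to $S$ from the level-$n$ equality (small slip: to get $\WW_m(T)\subset\WW_m(S)$ you should compose $\varphi$ with $\alpha$, the map sending $S$ to $T$, not with $\beta$; the labels are merely swapped). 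Where you genuinely depart from the paper is \eqref{corollary: same matricial range for matrices:3}$\implies$\eqref{corollary: same matricial range for matrices:4}: the paper simply cites \cref{corollary: same matricial range}, whose condition \eqref{corollary: same matricial range:3} requires the norm identity for $A,B$ of \emph{every} size, whereas the statement here fixes $A,B\in M_n(\CC)$ with $n$ the size of $S$ and $T$. Your appeal to Smith's lemma (the cb-norm of a map into $M_n(\CC)$ equals its $n$-norm) is exactly the right tool to promote the single-level isometry $\phi_0^{(n)}$ and its inverse to complete isometries, after which the Paulsen extension and the two-sided ucp compositions give the complete isometry of the operator systems. This buys the literal, sharper form of \eqref{corollary: same matricial range for matrices:3} at the cost of one standard extra ingredient, filling a reduction that the paper's one-line citation leaves implicit.
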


\begin{proof}
The equivalences \eqref{corollary: same matricial range for matrices:1} $\iff$\eqref{corollary: same matricial range for matrices:3}$\iff$\eqref{corollary: same matricial range for matrices:4}$\iff$\eqref{corollary: same matricial range for matrices:5} follow directly from \cref{corollary: same matricial range} (note that $M_n(\CC)$ is simple, so the irreducibility of $S$ and $T$ imply that they are first order). The implication \eqref{corollary: same matricial range for matrices:1}$\implies$\eqref{corollary: same matricial range for matrices:2} is trivial, so all that remains is to prove \eqref{corollary: same matricial range for matrices:2}$\implies$\eqref{corollary: same matricial range for matrices:1}. Assume that $\WW_n(S)=\WW_n(T)$, and let $X\in\mathbb W_m(S)$. So there exists a ucp map  $\varphi:M_n(\mathbb C)\to M_m(\mathbb C)$ with $\varphi(S)=X$.
As $S\in\mathbb W_n(S)=\mathbb W_n(T)$, there exists a ucp map $\psi:M_n(\mathbb C)\to M_n(\mathbb C)$ with $\psi(T)=S$. Then
\[
X=\phi(S)=\phi(\psi(T))\in\WW_m(T).
\]
It follows that $\mathbb W_m(S)\subset\mathbb W_m(T)$, and by reversing the roles of $S$ and $T$ we get equality.
\end{proof}

\begin{remark}
The reason one requires irreducibility for unitary equivalence in \cref{corollary: same matricial range for matrices} is multiplicity: for an easy example, we can take $S=E_{11}$, $T=E_{11}+E_{22}$ in $M_3(\CC)$ and then $\WW(S)=\WW(T)$ but they are obviously not unitarily equivalent.
\end{remark}

We will later use some sophisticated ideas---mainly by Arveson and by Ando both building on ideas related to dilations---to calculate the matricial range of the $2\times2$ unilateral shift (\cref{corollary: matricial range of the 2x2 shift,corollary: matricial range of E_21}). The matricial range of the $n\times n$ unilateral shift is unknown for $n\geq3$, but the infinite-dimensional unilateral shift (and, a posteriori, proper isometries) can be tackled with a rather direct approach.  We are grateful to D. Farenick for a simplification of our original argument.
\begin{proposition}\label{proposition: matricial range of the unilateral shift}
Let $T\in B(K)$ with $\|T\|\leq1$, and $S\in B(H)$ the unilateral shift. Then there exists a ucp map $\psi:B(H)\to B(K)$ with $\psi(S)=T$.
\end{proposition}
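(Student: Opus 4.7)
The plan is to exhibit $T$ as a compression $T = P_K\pi(S)|_K$ for some $*$-representation $\pi$ of $C^*(S)$; the ucp map will then arise by compression followed by Arveson's Extension Theorem.

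First I would dilate $T$ to a unitary. The most direct route is Halmos's construction: on $K' := K \oplus K$ the block matrix
\[
U = \begin{bmatrix} T & (I - TT^*)^{1/2} \\ (I - T^*T)^{1/2} & -T^* \end{bmatrix}
\]
is easily checked to be unitary (the off-diagonal terms of $U^*U$ vanish thanks to the intertwining $T^*(I - TT^*)^{1/2} = (I - T^*T)^{1/2}T^*$, which in turn follows from the polynomial identity $T^*(I - TT^*) = (I - T^*T)T^*$ and approximation). Identifying $K$ with the first summand of $K'$, we have $T = P_K U|_K$.

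Next I would realize $U$ as $\pi(S)$ for a $*$-representation $\pi$ of $C^*(S)$. Here the key input is Coburn's theorem: $K(H)$ is contained in the Toeplitz algebra $C^*(S)$, and the quotient map $q : C^*(S) \to C^*(S)/K(H) \cong C(\TT)$ sends $S$ to the identity function $z \in C(\TT)$. Since $U$ is unitary, continuous functional calculus gives a unital $*$-homomorphism $\rho : C(\TT) \to B(K')$ with $\rho(z) = U$. Hence $\pi := \rho \circ q$ is a $*$-representation of $C^*(S)$ with $\pi(S) = U$.

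Finally, the map $\psi_0 : C^*(S) \to B(K)$ defined by $\psi_0(X) = P_K\pi(X)|_K$ is ucp as a compression of a $*$-representation, and satisfies $\psi_0(S) = P_K U|_K = T$. Arveson's Extension Theorem then extends $\psi_0$ to the required ucp map $\psi : B(H) \to B(K)$ with $\psi(S) = T$.

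There is no real obstacle: the argument is a compact packaging of the unitary dilation of a contraction, the Toeplitz short exact sequence, and Arveson's extension. The only non-elementary tool invoked is Coburn's identification of $C^*(S)/K(H)$ with $C(\TT)$, which is entirely standard.
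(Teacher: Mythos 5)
Your proposal is correct and follows essentially the same route as the paper: both pass through the Halmos unitary dilation of $T$ on $K\oplus K$, use the fact that $S$ becomes the unitary generator $z$ of $C(\TT)$ modulo the compacts (the paper phrases this via a ``universal unitary,'' you via Coburn's exact sequence), and then compress to the $(1,1)$ corner. The only cosmetic difference is that you invoke Arveson's Extension Theorem explicitly at the end, which in fact tidies up a point the paper leaves implicit, since its composite map is a priori only defined on $C^*(S)$.
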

\begin{proof}
Since $T$ is a contraction, we can construct a unitary
\[
U_0=\begin{bmatrix} T& (I-TT^*)^{1/2} \\ (I-T^*T)^{1/2} & -T^*\end{bmatrix}\in M_2(B(K)).
\]
If $U$ is a {universal unitary} (that is, $\sigma(U)=\TT$), then $C^*(U)=C(\TT)$. As $\sigma(U_0)$ is a compact  subset of $\TT$,  there is a $*$-epimorphism (onto by Tietze's Extension Theorem) acting by restriction:

\[
\pi:C^*(U)=C(\TT)\to C(\sigma(U_0))=C^*(U_0),
\]
with $\pi(U)=U_0$.
Let $\rho:B(H)\to B(H)/K(H)$ be the quotient map. As $S$ becomes a universal unitary in the Calkin algebra, $C^*(\rho(S))\simeq C(\TT)\simeq C^*(U)$. Let $\gamma:C^*(\rho(S))\to C^*(U)$ be a $*$-isomorphism with $\gamma(\rho(S))=U$. Then $\pi\circ\gamma:C^*(\rho(S))\to C^*(U_0)$ is a $*$-epimorphism with $\pi\circ\gamma(\rho(S))=U_0$; in particular, ucp.
Let $\phi:M_2(B(K))\to B(K)$ be the compression to the $1,1$ entry. Then $\psi=\phi\circ\pi\circ\rho:B(H)\to B(K)$ is a ucp map with
\[
\phi\circ\tilde\pi\circ\rho (S)=T. \qedhere
\]
\end{proof}

\begin{corollary}\label{corollary: matricial range of the unilateral shift}
Let $S\in B(H)$ be a proper isometry, or a unitary with full spectrum $\TT$, and let $T\in B(K)$. Then the following statements are equivalent:

\begin{enumerate}
\item\label{proposition: matricial range of the unilateral shift:0} there exists a ucp map $\phi:B(H)\to B(K)$ such that $\phi(S)=T$;
\item\label{proposition: matricial range of the unilateral shift:1} $\|T\|\leq1$.
\end{enumerate}
In other words, $\WW_\infty(S)=\{T:\ \|T\|\leq1\}$, and so for all $n\in\NN$, $\WW_n(S)=\{A\in M_n(\CC):\ \|A\|\leq1\}$.
\end{corollary}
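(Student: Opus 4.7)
My plan is to establish the equivalence \eqref{proposition: matricial range of the unilateral shift:0} $\iff$ \eqref{proposition: matricial range of the unilateral shift:1} first, and then deduce the ``in other words'' consequence. The forward implication is immediate, since any ucp map is contractive: $\|T\| = \|\phi(S)\| \leq \|S\| = 1$.

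For the reverse implication I would split into the two cases of the hypothesis and, in each, produce the required ucp map using \cref{proposition: matricial range of the unilateral shift} as the workhorse. If $S$ is a proper isometry, the Wold decomposition writes $S = S_s \oplus S_u$ on $H = H_s \oplus H_u$, with $S_s$ a nonzero direct sum of copies of the unilateral shift. Picking one such copy yields an $S$-invariant subspace $H_0 \subset H_s$ on which $S$ acts as the ordinary unilateral shift. The compression $\Phi:B(H)\to B(H_0)$, $\Phi(X) = P_{H_0} X P_{H_0}$, is ucp, and by $S$-invariance $\Phi(S) = S|_{H_0}$. Composing with the ucp map produced by \cref{proposition: matricial range of the unilateral shift} yields a ucp map $B(H)\to B(K)$ sending $S$ to $T$.

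If instead $S$ is a unitary with $\sigma(S) = \TT$, then $S$ is itself a universal unitary and $C^*(S)\simeq C(\TT)$, so I would re-run the argument of \cref{proposition: matricial range of the unilateral shift} directly, without needing to pass through the Calkin algebra. Explicitly: build the $2\times 2$ unitary dilation $U_0$ of $T$ out of the defect operators as in that proof, use Tietze's Extension Theorem to get a $*$-epimorphism $\pi:C^*(S)\to C^*(U_0)$ with $\pi(S) = U_0$, extend by Arveson to a ucp map $B(H)\to M_2(B(K))$, and finally compose with the compression to the $(1,1)$ corner.

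I do not anticipate a serious obstacle, as both cases are essentially short reductions to the already established proposition; the only point that needs a touch of care is verifying that the compression to the chosen invariant subspace sends $S$ to the unilateral shift (which is immediate from invariance). The ``in other words'' statement then follows by applying the equivalence with $K = \ell^2(\NN)$ to obtain $\WW_\infty(S) = \{T\in B(\ell^2(\NN)):\ \|T\|\leq 1\}$, and by identifying $M_n(\CC)$ with the upper-left $n\times n$ corner of $B(\ell^2(\NN))$ and invoking \cref{proposition: el rango matricial alcanza} to recover the stated description of each $\WW_n(S)$.
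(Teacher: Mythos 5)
Your proposal is correct and follows essentially the same route as the paper: contractivity of ucp maps for the forward direction, and for the converse the Wold decomposition plus compression onto a shift summand (respectively, re-running the universal-unitary argument of \cref{proposition: matricial range of the unilateral shift} when $\sigma(S)=\TT$), composed with the ucp map from that proposition. The only cosmetic difference is that you route the finite-dimensional statement through $\WW_\infty$ and \cref{proposition: el rango matricial alcanza}, whereas one can simply apply the equivalence directly with $K=\CC^n$.
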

\begin{proof}
\eqref{proposition: matricial range of the unilateral shift:0}$\implies$\eqref{proposition: matricial range of the unilateral shift:1} Since $\|S\|=1$ and $\phi$ is ucp, we get that $\|T\|=\|\phi(S)\|\leq1$.

\eqref{proposition: matricial range of the unilateral shift:1}$\implies$\eqref{proposition: matricial range of the unilateral shift:0} Let $S_0$ be the unilateral shift. By \cref{proposition: matricial range of the unilateral shift} there exists a ucp map $\psi$ with $\psi(S_0)=T$. If $S$ is a proper isometry, by the Wold Decomposition there exist  unitaries $U$ and $W$ such that $S=W^*(U\oplus\bigoplus_j S_0)W$. Then

\[
S\longmapsto WSW^*=U\oplus\bigoplus_j S_0\longmapsto S_0\xrightarrow{\psi} T
\]
is a ucp map.  If $S$ is a unitary with full spectrum, we can proceed as in the proof of \cref{proposition: matricial range of the unilateral shift} to get a ucp map $\phi$ with $\phi(S)=T$.
\end{proof}

\section{Unitary Dilations, Numerical Radius, and Matricial Range}

Advances in operator theory---concretely, about dilations---gave between in late 1960s and the early 1970s several striking characterizations of the numerical radius and, as a byproduct, a characterization of the matricial range of the $2\times 2$ unilateral shift. We will visit, in \cref{section: Ando,section: nilpotent dilations} respectively,  Ando's and Arveson's techniques built upon these theories.

Given a group $G$, a function $T:G\to B(H)$ is said to be \emph{positive-definite} if
\begin{equation}\label{equation: definition positive-definite function on a group}
\sum_{s\in G}\sum_{t\in G}\langle T(t^{-1}s)\xi(s),\xi(t)\rangle\geq0
\end{equation}
for all functions $\xi:G\to H$ of finite support. It is not hard to see that \eqref{equation: definition positive-definite function on a group} implies that $T(s^{-1})=T(s)^*$ for all $s\in G$.

A particular case of a positive-definite function is given by a {unitary representation}. That is, a function $U:G\to B(H)$ such that $U(e)=I$, $U(s)$ is a unitary for all $s\in G$, and $U(st)=U(s)U(t)$ for all $s,t\in G$.
It turns out that one can do a kind of GNS representation for a positive-definite function, and so all positive-definite functions arise from unitary representations. We will only need the particular case where $G=\ZZ$.

\begin{theorem}[Sz.Nagy--Foia\cb{s}]\label{theorem: positive-definite functions are compressions of unitary representations}

Let $H$ be a Hilbert space, and $T:\ZZ\to B(H)$ with $T(0)=I$. The following statements are equivalent:
\begin{enumerate}
\item\label{corollary: positive-definite function on Z:1}There exists a Hilbert space $K\supset H$ and a unitary $U\in B(K)$ such that $T(n)=P_HU^n|_H$ for all $N\in\ZZ$.
\item\label{corollary: positive-definite function on Z:2}$T$ is positive-definite.
\end{enumerate}
\end{theorem}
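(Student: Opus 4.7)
The plan is to prove the two directions separately, with the hard work concentrated in the reconstruction $(2)\Rightarrow(1)$ via a GNS-style construction, much as one builds the regular representation of a group from a positive-definite function.

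For the easy direction $(1)\Rightarrow(2)$, I would fix a finitely supported $\xi:\ZZ\to H$ and compute directly. Viewing each $\xi(s)$ as an element of $K\supset H$, and using that $T(n)=P_HU^n|_H$ together with $\xi(t)\in H$, one has $\langle T(t-s)\xi(s),\xi(t)\rangle=\langle U^{t-s}\xi(s),\xi(t)\rangle$. Since $U$ is unitary, $U^{t-s}=U^{-s}(U^{-t})^*$, so the full double sum collapses to
\[
\sum_{s,t}\langle U^{-s}\xi(s),U^{-t}\xi(t)\rangle=\Big\|\sum_s U^{-s}\xi(s)\Big\|^2\geq 0.
\]

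For the substantial direction $(2)\Rightarrow(1)$, I would let $K_0$ be the vector space of finitely supported functions $\xi:\ZZ\to H$ and equip it with the sesquilinear form $\langle\xi,\eta\rangle_0:=\sum_{s,t}\langle T(t-s)\xi(s),\eta(t)\rangle$. Positive-definiteness says exactly that this is positive semidefinite. Quotienting $K_0$ by $N=\{\xi:\langle\xi,\xi\rangle_0=0\}$ (using Cauchy--Schwarz to show $N$ is a subspace) and completing yields a Hilbert space $K$. The map $j:H\to K$ sending $\xi$ to the class of $\xi\delta_0$ (value $\xi$ at $0$, zero elsewhere) is an isometry since $\langle \xi\delta_0,\xi\delta_0\rangle_0=\langle T(0)\xi,\xi\rangle=\|\xi\|^2$; thus I identify $H$ with $j(H)\subset K$.

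Next I would define the shift $U_0$ on $K_0$ by $(U_0\xi)(n)=\xi(n+1)$. A direct index shift shows $\langle U_0\xi,U_0\eta\rangle_0=\langle\xi,\eta\rangle_0$, so $U_0$ preserves the form, hence descends to an isometry on $K_0/N$ and extends to an isometry $U$ on $K$. The inverse shift $(\xi)(n)\mapsto\xi(n-1)$ is similarly isometric and is a two-sided inverse, so $U$ is unitary. Finally, for $\xi,\eta\in H$,
\[
\langle U^n\xi\delta_0,\eta\delta_0\rangle=\langle\xi\delta_{-n},\eta\delta_0\rangle_0=\sum_{s,t}\langle T(t-s)(\xi\delta_{-n})(s),(\eta\delta_0)(t)\rangle=\langle T(n)\xi,\eta\rangle,
\]
which is the desired identity $P_HU^n|_H=T(n)$.

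The main obstacle is bookkeeping rather than insight: I need to check that $U_0$ really preserves the form (a clean reindexing), that it carries the null space $N$ into itself so that the quotient definition makes sense, and that the final compression identity lines up with the correct sign in the exponent (which forces the choice $(U_0\xi)(n)=\xi(n+1)$ rather than the opposite shift). The fact that $T(-n)=T(n)^*$, itself a consequence of positive-definiteness, ensures that either convention ultimately yields the claim, but picking the right one from the start keeps the computation transparent.
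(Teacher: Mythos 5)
Your argument is correct, and it is worth noting that the paper does not actually prove this theorem at all: its ``proof'' is a one-line citation to \cite[Theorem 7.1]{SzNagy--Foias-book}. What you have written is a self-contained version of exactly the classical Naimark/GNS construction that the cited reference carries out (separation and completion of the finitely supported $H$-valued functions under the form induced by $T$, the translation operator as the unitary, and the evaluation at $0$ as the embedding of $H$), so you are supplying the standard argument rather than a genuinely different one. Both directions check out: in $(1)\Rightarrow(2)$ the identity $\langle U^{t-s}\xi(s),\xi(t)\rangle=\langle U^{-s}\xi(s),U^{-t}\xi(t)\rangle$ does collapse the double sum to a squared norm, and in $(2)\Rightarrow(1)$ the computation $U^n(\xi\delta_0)=\xi\delta_{-n}$ gives $\langle U^n\xi\delta_0,\eta\delta_0\rangle_0=\langle T(n)\xi,\eta\rangle$ as claimed. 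The only point I would make explicit is the convention you flag yourself: the paper's definition of positive-definiteness, written additively for $G=\ZZ$, involves $T(s-t)$ where your form uses $T(t-s)$. This is harmless --- substituting $\xi(s)\mapsto\xi(-s)$ shows the two positivity conditions are equivalent even before one knows $T(-n)=T(n)^*$ --- but a referee would want that one line, since otherwise the sesquilinear form you build is not literally the one whose positivity is hypothesized. You should also record (or at least invoke) that a positive semidefinite sesquilinear form on a complex space is automatically Hermitian, which is what licenses the Cauchy--Schwarz step in forming the quotient by the null space.
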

\begin{proof}
This is \cite[Theorem 7.1]{SzNagy--Foias-book}.
\end{proof}

\begin{remark}\label{remark: E_21 as a corner of the bilateral shift}
In the case where $H=\CC^2$ and $T=T(1)=E_{21}$, $T(n)=0$ for all $n\geq2$, the unitary $U$ can be obtained explicitly as the bilateral shift. Concretely, we take $K=\ell^2(\ZZ)$, and define $U$ on the canonical basis by $Ue_k=e_{k+1}$, extended by linearity and continuity (since $U$ is isometric).  If we identify $H=\CC^2$ with $\spann\{e_0,e_1\}$, then $P_HU^n|_H=E_{12}^n$ for all $n\in\NN$ (which simply means that $P_HU|_H=E_{12}$, $P_HU^2|_H=0$).
\end{remark}

Most considerations of the numerical radius will use the following elementary characterization:
\begin{proposition}\label{proposition: characterization of the numerical range}
Let $T\in B(H)$ with $\|T\|\leq1$. The following statements are equivalent:
\begin{enumerate}
\item\label{proposition: characterization of the numerical range:1} $w(T)\leq1$;
\item\label{proposition: characterization of the numerical range:2} for all $\lambda\in\TT $, $I+\re \lambda
T\geq0$;
\item\label{proposition: characterization of the numerical range:3}for all $\lambda\in\TT$, $\re\lambda T\leq I$;
\item\label{proposition: characterization of the numerical range:4}for all $z\in\DD$, $\re z T\leq I$
\end{enumerate}
\end{proposition}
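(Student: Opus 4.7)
The plan is to reduce the whole chain of equivalences to a single scalar observation: a complex number $z$ satisfies $|z|\leq 1$ if and only if $\re(\lambda z)\leq 1$ for every $\lambda\in\TT$. One direction is immediate from $\re(\lambda z)\leq|\lambda z|$; the converse follows by choosing $\lambda=\bar z/|z|$ when $z\ne 0$. Applying this pointwise to the scalars $\langle T\xi,\xi\rangle$ for unit vectors $\xi\in H$, and invoking the fact---already mentioned in the preliminaries---that the classical spatial numerical range $W(T)$ is dense in $\WW_1(T)$, the condition $w(T)\leq 1$ is equivalent to $\re\lambda\,\langle T\xi,\xi\rangle\leq 1$ for every $\lambda\in\TT$ and every unit $\xi$. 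Rewritten as an operator inequality, this is exactly $\re\lambda T\leq I$ for all $\lambda\in\TT$. This gives the equivalence \eqref{proposition: characterization of the numerical range:1}$\iff$\eqref{proposition: characterization of the numerical range:3}.

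For \eqref{proposition: characterization of the numerical range:2}$\iff$\eqref{proposition: characterization of the numerical range:3}, the point is simply that $\TT=-\TT$: replacing $\lambda$ by $-\lambda$ in (2) turns $I+\re\lambda T\geq 0$ into $I-\re\lambda T\geq 0$, i.e.\ $\re\lambda T\leq I$, and the same substitution in reverse recovers (2) from (3).

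For \eqref{proposition: characterization of the numerical range:3}$\iff$\eqref{proposition: characterization of the numerical range:4}, write any nonzero $z\in\DD$ as $z=r\lambda$ with $r\in[0,1)$ and $\lambda\in\TT$. Then $\re zT=r\,\re\lambda T$, so (3) implies $\re zT\leq rI\leq I$, giving (4); conversely, fixing $\lambda\in\TT$ and letting $r\to 1^-$ in $r\,\re\lambda T\leq I$ recovers $\re\lambda T\leq I$ by taking strong-operator limits in a uniformly bounded family.

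There is no real obstacle here; the only mild subtlety is making sure $w(T)$, as defined in the paper through \emph{all} states of $B(H)$, coincides with the supremum of $|\langle T\xi,\xi\rangle|$ over unit vectors, but this is exactly what the density of $W(T)$ in $\WW_1(T)$ (already recorded in the paper) guarantees. The hypothesis $\|T\|\leq 1$ plays no essential role in the equivalences themselves; it is the natural running assumption when discussing unitary dilations in the next section.
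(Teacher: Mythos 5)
Your proposal is correct and is essentially the paper's argument reorganized: the scalar fact that $|z|\leq1$ iff $\re(\lambda z)\leq 1$ for all $\lambda\in\TT$, applied to $\langle T\xi,\xi\rangle$ together with the density of $W(T)$ in $\WW_1(T)$, is exactly the content of the paper's implications \eqref{proposition: characterization of the numerical range:1}$\implies$\eqref{proposition: characterization of the numerical range:2} and \eqref{proposition: characterization of the numerical range:4}$\implies$\eqref{proposition: characterization of the numerical range:1}, and the $\TT=-\TT$ and $z=r\lambda$ steps coincide with the paper's. The only cosmetic difference is that you prove direct equivalences where the paper closes a cycle of implications, and your appeal to ``strong-operator limits'' in \eqref{proposition: characterization of the numerical range:4}$\implies$\eqref{proposition: characterization of the numerical range:3} is heavier than needed (a pointwise scalar limit $r\to1^-$ in $\langle\re(r\lambda T)\xi,\xi\rangle\leq\|\xi\|^2$ suffices, as in the paper).
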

\begin{proof}
\eqref{proposition: characterization of the numerical range:1}$\implies$\eqref{proposition: characterization of the numerical range:2} If $w(T)\leq1$, then for any $\lambda\in\TT$ and $\xi\in H$ with $\|\xi\|=1$,
\begin{align*}
\langle -\re \lambda T\xi,\xi\rangle &=\re\langle (-\lambda)T\xi,\xi\rangle \abajo
&\leq|\langle(-\lambda)T\xi,\xi\rangle|=|\langle T\xi,\xi\rangle|\leq1=\langle\xi,\xi\rangle.
\end{align*}
Thus $\langle (I+\re\lambda T)\xi,\xi\rangle\geq0$.

\eqref{proposition: characterization of the numerical range:2}$\implies$\eqref{proposition: characterization of the numerical range:3} This is a direct consequence of the  fact that $\TT=-\TT$.

\eqref{proposition: characterization of the numerical range:3}$\implies$\eqref{proposition: characterization of the numerical range:4} If $z\in\DD$, then $z=r\lambda$ with $0\leq r<1$ and $\lambda\in\TT$. So
\[
\re zT=r\,\re\lambda T\leq rI\leq I.
\]

 \eqref{proposition: characterization of the numerical range:4}$\implies$\eqref{proposition: characterization of the numerical range:1} Given $\xi\in H$ with $\|\xi\|=1$, let $\lambda\in\TT$ such that $|\langle T\xi,\xi\rangle|=\lambda\,\langle T\xi,\xi\rangle$. Then
\begin{align*}
|\langle T\xi,\xi\rangle|
&=\re|\langle T\xi,\xi\rangle|=\re \lambda \langle T\xi,\xi\rangle \abajo
&=\langle \re\lambda T\xi,\xi\rangle
=\lim_{r\to1}\langle \re r\lambda T\xi,\xi\rangle
\leq \langle\xi,\xi\rangle =1.
\end{align*}
Thus $w(T)\leq1$.
\end{proof}
\bigskip

We state and prove a version of a characterization of unitary dilations, due to Sz.-Nagy and Foia\cb{s} \cite[Theorem 11.1]{SzNagy--Foias-book}. In their terminology, we only consider 2-dilations.

\begin{theorem}[Sz.Nagy-Foia\cb{s}]\label{theorem: sz.nagy-foias unitary 2-dilation}
Let $T\in B(H)$. The following statements are equivalent:
\begin{enumerate}
\item\label{theorem: sz.nagy-foias unitary 2-dilation:1}there exists a Hilbert space $K\supset H$, and a unitary $U\in B(K)$ such that \begin{equation}\label{equation: unitary 2-dilation}T^n=2\,P_HU^n|_H \ \ \ \text{ for all }n\in\NN;\end{equation}
\item\label{theorem: sz.nagy-foias unitary 2-dilation:2}$w(T)\leq1$.
\end{enumerate}
\end{theorem}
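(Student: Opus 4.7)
The plan is to derive both implications from a single algebraic identity together with the tools developed earlier: \cref{theorem: positive-definite functions are compressions of unitary representations} (which characterizes unitary dilations on $\ZZ$ via positive-definiteness) and \cref{proposition: characterization of the numerical range} (which supplies four equivalents of $w(T)\leq 1$). The key elementary gadget is the identity
\[
(\ast)\qquad 2\,\re(W^{-1})=(W^{-1})^*(W+W^*)W^{-1},\qquad W\in B(H)\text{ invertible,}
\]
which in particular forces $\re(W)\geq 0\iff\re(W^{-1})\geq 0$.

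For \eqref{theorem: sz.nagy-foias unitary 2-dilation:1}$\implies$\eqref{theorem: sz.nagy-foias unitary 2-dilation:2}, the hypothesis gives $\|T^n\|\leq 2$, so the spectral radius satisfies $\rho(T)\leq 1$ and for every $z\in\DD$ the relevant Neumann series converge. Combining them yields
\[
P_H(I+zU)(I-zU)^{-1}|_H=2P_H(I-zU)^{-1}|_H-I=(I-zT)^{-1}.
\]
Since $U$ is unitary, a short functional-calculus (Cayley-type) computation produces $\re\bigl((I+zU)(I-zU)^{-1}\bigr)=(1-|z|^2)|I-zU|^{-2}\geq 0$. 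Compressing this inequality to $H$ gives $\re\bigl((I-zT)^{-1}\bigr)\geq 0$, and $(\ast)$ promotes this to $\re(I-zT)\geq 0$; that is, $\re(zT)\leq I$ for every $z\in\DD$. By \cref{proposition: characterization of the numerical range}, $w(T)\leq 1$.

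For \eqref{theorem: sz.nagy-foias unitary 2-dilation:2}$\implies$\eqref{theorem: sz.nagy-foias unitary 2-dilation:1}, I define $\widehat T:\ZZ\to B(H)$ by $\widehat T(0)=I$, $\widehat T(n)=\tfrac{1}{2}T^n$ for $n>0$, and $\widehat T(n)=\tfrac{1}{2}(T^*)^{-n}$ for $n<0$. If $\widehat T$ is positive-definite then \cref{theorem: positive-definite functions are compressions of unitary representations} delivers a unitary $U$ with $\widehat T(n)=P_HU^n|_H$ for all $n\in\ZZ$, which is exactly \eqref{equation: unitary 2-dilation}. To verify positive-definiteness, sum two geometric series: since $w(T)\leq 1$ forces $\rho(T)\leq 1$, for $r\in[0,1)$ and $z\in\TT$ one obtains
\[
\widehat F_r(z):=\sum_{n\in\ZZ}r^{|n|}z^n\widehat T(n)=\tfrac{1}{2}(I-rzT)^{-1}+\tfrac{1}{2}(I-r\bar z\,T^*)^{-1}=\re\bigl((I-rzT)^{-1}\bigr),
\]
with absolute-norm convergence uniformly in $z\in\TT$. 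By \cref{proposition: characterization of the numerical range} we have $\re(rzT)\leq I$, hence $\re(I-rzT)\geq 0$, and $(\ast)$ gives $\widehat F_r(z)\geq 0$. Now for a finitely supported $\xi:\ZZ\to H$, set $\eta(z):=\sum_s\bar z^s\xi(s)$ and integrate against Haar measure on $\TT$: orthogonality of characters collapses $\int_\TT\langle\widehat F_r(z)\eta(z),\eta(z)\rangle\,dm(z)$ into the finite sum $\sum_{s,t}r^{|s-t|}\langle\widehat T(s-t)\xi(s),\xi(t)\rangle$, which is therefore $\geq 0$. Letting $r\to 1^-$ (only finitely many $r$-dependent terms survive) yields \eqref{equation: definition positive-definite function on a group}.

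The main obstacle is aligning the Fourier bookkeeping with the positive-definite convention (the choice $\eta(z)=\sum\bar z^s\xi(s)$ rather than $\sum z^s\xi(s)$ is what picks out $\widehat T(s-t)$ in the right order); the identity $(\ast)$ is the only genuinely delicate algebraic input, and once it is in hand both implications reduce to formal manipulations of generating functions fed into the already-established unitary dilation theorem.
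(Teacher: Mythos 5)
Your proof is correct and follows essentially the same route as the paper: the forward direction uses the same Neumann-series identity $(I-zT)^{-1}=P_H(I+zU)(I-zU)^{-1}|_H$ together with the positivity of $\re\bigl((I+zU)(I-zU)^{-1}\bigr)$ and the equivalence $\re(W)\geq0\iff\re(W^{-1})\geq0$ (which the paper implements by the substitution $\xi\mapsto(I-zT)\xi$ rather than your identity $(\ast)$), and the converse builds the same positive-definite function $\widehat T$ verified by integrating the same Poisson-type kernel (your $\widehat F_r(z)$ is the paper's $Q(r,t)$) against trigonometric vector polynomials before invoking \cref{theorem: positive-definite functions are compressions of unitary representations}. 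The only cosmetic differences are that you observe $\widehat F_r(z)=\re\bigl((I-rzT)^{-1}\bigr)\geq0$ as an operator inequality where the paper checks $\langle Q(r,t)\xi,\xi\rangle\geq0$ vector by vector, and you justify convergence via $\|T^n\|\leq2\Rightarrow\rho(T)\leq1$, a point the paper leaves implicit in the forward direction.
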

\begin{proof}
\eqref{theorem: sz.nagy-foias unitary 2-dilation:1}$\implies$\eqref{theorem: sz.nagy-foias unitary 2-dilation:2}
Fix $z\in\DD$. Since $|z|<1$, the series below converges and we can manipulate it as follows:
\[
I+2\sum_{k=1}^\infty z^kU^k=-I+2\sum_{k=0}^\infty z^kU^k=-I+2(I-zU)^{-1}=(I+zU)(I-zU)^{-1}.
\]
Then
\begin{equation}\label{equation: sz.nagy-foias unitary 2-dilation:0}
\begin{aligned}
(I_H-zT)^{-1}&=I_H+\sum_{k=1}^\infty z^kT^k=P_H\left.\left( I+2\sum_{k=1}^\infty z^kU^k\right)\right|_H \abajo
&=\left.\vphantom{\int}P_H(I+zU)(I-zU)^{-1}\right|_H.
\end{aligned}
\end{equation}
For any $\xi\in K$,
\[
\re\langle (I+zU)\xi,(I-zU)\xi\rangle=(1-|z|^2)\|\xi\|^2\geq0.
\]
In particular, with $\xi=(I-zU)^{-1}\eta$, we obtain
\begin{equation}\label{equation: sz.nagy-foias unitary 2-dilation:1}
\re\langle (I+zU)(I-zU)^{-1}\eta,\eta\rangle\geq0,\ \ \ \eta\in K,\ z\in\DD.
\end{equation}
When $\xi\in H$, we get from \eqref{equation: sz.nagy-foias unitary 2-dilation:0} and \eqref{equation: sz.nagy-foias unitary 2-dilation:1} that
\begin{align*}
\langle\re(I_H-zT)^{-1}\xi,\xi\rangle
&=\re \langle (I_H-zT)^{-1}\xi,\xi\rangle \abajo
&=\re \langle P_H(I+zU)(I-zU)^{-1}\xi,\xi\rangle \abajo
&=\re \langle (I+zU)(I-zU)^{-1}\xi,\xi\rangle\geq0.
\end{align*}
Replacing $\xi$ with $(I_H-zT)\xi$, the above becomes
\[
\langle \xi,\re(I_H-zT)\xi\rangle\geq0,
\]
and so $\re(I_H-zT)\geq0$. Now \cref{proposition: characterization of the numerical range} gives $w(T)\leq1$.

\eqref{theorem: sz.nagy-foias unitary 2-dilation:2}$\implies$\eqref{theorem: sz.nagy-foias unitary 2-dilation:1} Since $w(T)\leq1$, we have $\sigma(T)\subset \WW_1(T)\subset\overline\DD$. Then  $I-zT=z\,(\frac1z\,I-T)$ is invertible for all $z\in\DD$ (when $z=0$, $I-zT=I$). For any $z\in\DD$,
\[
\|z^nT^n\|^{1/n}\to\text{spr}\,(zT)=|z|\,\text{spr}\,(T)\leq|z|<1.
\]
In particular, for $\varepsilon<1-|z|$, there exists $n_0$ such that $\|z^nT^n\|\leq(|z|+\varepsilon)^n<1$ for all $n\geq n_0$. Thus the series $\sum_{k=0}^\infty z^nT^n$ is norm convergent and equal to $(I-zT)^{-1}$.

Let $\xi\in H$, and put $\eta=(I-zT)^{-1}\xi$. By \cref{proposition: characterization of the numerical range}, we have $\re(I-zT)\geq0$ for all $z\in\DD$; thus
\begin{align*}
0&\leq\re\langle \eta,(I-zT)\eta\rangle= \re \langle  (I-zT)^{-1}\xi,(I-zT)(I-zT)^{-1}\xi\rangle \abajo
&=\re\langle (I-zT)^{-1}\xi,\xi\rangle .
\end{align*}
Define, for $0\leq r<1$ and $0\leq t\leq2\pi$,
\[
Q(r,t)=I+\tfrac12\,\sum_{k=1}^\infty r^k(e^{ikt}T^k+e^{-ikt}T^{*k}).
\]
This converges in norm  by the argument with the spectral radius we just used above.  Also, with $z=r\,e^{it}$,
\[
\langle Q(r,t)\xi,\xi\rangle = \|\xi\|^2+\tfrac12\,\re\,\langle \sum_{k=0}^\infty
z^kT^k\xi,\xi\rangle = \|\xi\|^2+\tfrac12\,\re\langle (I-zT)^{-1}\xi,\xi\rangle \geq0.
\]

Given a sequence $\{\xi_n\}_{n\in\ZZ}$ with finite support, let us form
\[
\xi(t)=\sum_{n\in\ZZ} e^{-int}\xi_n.
\]
Then (recall that the sum over $n$ has finitely many nonzero terms, so the exchange with the integral is not an issue; for the sum over $k$, the convergence is uniform and the exchange is again possible)
\begin{align*}
0&\leq \frac1{2\pi}\int_0^{2\pi}\langle Q(r,t)\xi(t),\xi(t)\rangle\,dt \abajo
&=\sum_{m,n\in\ZZ}\frac1{2\pi}\int_0^{2\pi}e^{-i(n-m)t}\langle \xi_n+\tfrac12\,\sum_{k=1}^\infty r^ke^{ikt}T^k\xi_n+r^ke^{-ikt}T^{*k}\xi_n,\xi_m\rangle\,dt \abajo
&=\sum_{n\in\ZZ}\langle\xi_n,\xi_n\rangle+\tfrac12\sum_{n>m}r^{n-m}\langle T^{n-m}\xi_n,\xi_m\rangle+\tfrac12\,\sum_{n<m}r^{m-n}\langle T^{*(m-n)}\xi_n,\xi_m\rangle
\end{align*}
(for the last equality, note that the integrals will be nonzero only when $-n+m+k=0$ and $-n+m-k=0$; this fixes $k$, and we also have the restriction that $k\geq1$, which makes the case $n=m$ vanish). Now define  a function $T:\ZZ\to B(H)$ by
\[
T(0)=I,\ \ \ T(n)=\tfrac12\,T^n,\ \ \ T(-n)=\tfrac12\,T^{*n},\ \ \ n\geq1.
\]
We can rewrite the inequality above as
\begin{align*}
0\leq&\sum_{n\in\ZZ}\langle T(n-n)\xi_n,\xi_n\rangle+\sum_{n>m}r^{n-m}\langle T(n-m)\xi_n,\xi_m\rangle \abajo
&+\sum_{n<m}r^{m-n}\langle T(n-m)\xi_n,\xi_m\rangle.
\end{align*}
Noting that all sums are finite by hypothesis, we may take $r\nearrow1$, and then
\[
0\leq \sum_{n,m\in\ZZ}\langle T(n-m)\xi_n,\xi_m\rangle,
\]
which shows that the function $n\longmapsto T(n)$ is positive-definite. By \cref{theorem: positive-definite functions are compressions of unitary  representations}, there exists a Hilbert space $K\supset H$ and a unitary $U\in B(K)$ such that $T(n)=P_HU^n|_H$ for all $n\in\ZZ$. In particular,
\[
T^n=2T(n)=2\,P_HU^n|_H,\ \ \ n\in\NN. \qedhere
\]
\end{proof}

\section{Ando's Characterizations of the Numerical Radius}\label{section: Ando}

The following surprising characterization is \cite[Lemma 1]{ando1973}. We do not think that ``lemma'' is a fair word to describe it. The proof follows closely that of Ando, but we have tried to make it a bit clearer. 
\begin{theorem}\label{lemma: ando lemma 1}
Let $T\in B(H)$ with $w(T)\leq1$. Then there exists $X\in B(H)^+$, contractive (i.e., $0\leq X\leq I$) such that for all $\xi \in H$
\begin{equation}\label{equation: ando lemma 1:0}
\langle X\xi,\xi\rangle = \inf\left\{ \left\langle \begin{bmatrix} I & \tfrac12\,T^*  \\ \tfrac12\,T  & X\end{bmatrix}
\begin{bmatrix} \xi\\ \eta\end{bmatrix},\begin{bmatrix} \xi\\ \eta\end{bmatrix}\right\rangle:\ \
\eta\in H\right\}.
\end{equation}
Such $X$ satisfies
\begin{equation}\label{equation: ando lemma 1:00}
X=\max\left\{Y\in B(H):\ 0\leq Y\leq I,\ \begin{bmatrix}I-Y&\tfrac12\,T^* \\ \tfrac12\,T &Y\end{bmatrix}\geq0\right\}.
\end{equation}
\end{theorem}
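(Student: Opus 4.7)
The plan is to show that the set
$\cF := \{Y \in B(H) : 0 \leq Y \leq I,\ \begin{bmatrix} I-Y & \tfrac{1}{2}T^* \\ \tfrac{1}{2}T & Y \end{bmatrix} \geq 0\}$
is nonempty with a unique maximum $X$, and then verify the infimum formula at this $X$.

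First, $\cF$ is nonempty: the hypothesis $w(T) \leq 1$ gives, via Sz.-Nagy-Foias (\cref{theorem: sz.nagy-foias unitary 2-dilation}), a unitary $U$ on some $K \supset H$ with $T = 2P_HU|_H$, and an explicit element of $\cF$ can be extracted by compressing a suitable positive block operator on $K \oplus K$ built from $U$ down to $H \oplus H$; the dilation is the device that converts the numerical-radius hypothesis into a manageable block positivity. The set $\cF$ is also convex (convex combinations of PSD blocks are PSD) and WOT-compact (a closed subset of the self-adjoint unit ball of $B(H)$).

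Second, I would identify the maximum. By Schur complement, block positivity for $Y > 0$ reduces to the operator inequality $Y + \tfrac{1}{4}T^*Y^{-1}T \leq I$, with equality picking out the boundary of $\cF$. The map $F(Y) := I - \tfrac{1}{4}T^*Y^{-1}T$ is operator-monotone on $\{Y > 0\}$ and satisfies $Y \leq F(Y)$ for $Y \in \cF$ with $Y > 0$, so the iteration $Y_{n+1} = F(Y_n)$ starting from any such $Y_0$ stays in $\cF$ and is monotonically increasing; the SOT-limit $X$ is a fixed point, $X + \tfrac{1}{4}T^*X^{-1}T = I$. The infimum formula is then immediate: the quadratic $\eta \mapsto \|\xi\|^2 + \re\langle T\xi,\eta\rangle + \langle X\eta,\eta\rangle$ is minimized at $\eta^* = -\tfrac{1}{2}X^{-1}T\xi$ with value $\|\xi\|^2 - \tfrac{1}{4}\langle T^*X^{-1}T\xi,\xi\rangle = \langle X\xi,\xi\rangle$ by the fixed-point equation. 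For the maximum property, any $Y \in \cF$ satisfies $\langle Y\xi,\xi\rangle \leq \|\xi\|^2 + \re\langle T\xi,\eta^*\rangle + \langle Y\eta^*,\eta^*\rangle$ from block positivity at $(\xi,\eta^*)$; substituting and using $X = F(X)$ yields $\langle(Y-X)\xi,\xi\rangle \leq \tfrac{1}{4}\langle(Y-X)X^{-1}T\xi, X^{-1}T\xi\rangle$, which upon iteration forces $Y \leq X$.

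The main obstacle is the degenerate case where $X$ has a nontrivial kernel (for example $T = 2E_{21}$ gives $\cF = \{E_{22}\}$): the map $F$ is then not defined at $X$, and one must either work with Moore-Penrose pseudo-inverses, restricting the infimum formula's nontrivial content to vectors with $T\xi$ in the closure of the range of $X$, or regularize by passing from $(1-\varepsilon)T$ to $T$ as $\varepsilon \to 0^+$ and using WOT-compactness of $\cF$ to extract a convergent subsequence. In either case, the 2-dilation is the essential tool translating $w(T) \leq 1$ into the block-matrix positivity on which the whole argument hinges.
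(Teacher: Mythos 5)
Your proposal takes a genuinely different route from the paper, but the route has a fatal gap at its center: the fixed-point map $F(Y)=I-\tfrac14\,T^*Y^{-1}T$ has in general \emph{many} fixed points in $\mathcal F$, and neither your iteration nor your maximality argument singles out the largest one. Already for $H=\CC$ and $T=t\in(0,1)$ the fixed-point equation $x=1-\tfrac{t^2}{4x}$ has the two solutions $x_{\pm}=\tfrac12\bigl(1\pm\sqrt{1-t^2}\,\bigr)$, both of which lie in $\mathcal F$ and both of which satisfy your infimum formula; starting the iteration $Y_{n+1}=F(Y_n)$ at $Y_0=x_-$ leaves you at $x_-$ forever, which is not the maximum. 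Correspondingly, the closing step of your maximality argument---$\langle(Y-X)\xi,\xi\rangle\leq\tfrac14\langle(Y-X)X^{-1}T\xi,X^{-1}T\xi\rangle$ ``upon iteration forces $Y\leq X$''---does not follow: iterating gives $Y-X\leq R^{*n}(Y-X)R^n$ with $R=\tfrac12\,X^{-1}T$, and this yields $Y\leq X$ only if one controls $\|R^n\|$, which one cannot; in the scalar example at $x=x_-$ one has $|R|=t/(1-\sqrt{1-t^2}\,)\geq1$ and the conclusion is false there. Any correct argument must privilege the \emph{top} of $\mathcal F$. The paper does this by descending from $I$: it sets $X_n=P_H(I-Q_n)P_H$ with $Q_n$ the projection onto $\overline{\spann}\bigcup_{k=1}^nU^{*k}H$ for the unitary $2$-dilation $U$ of \cref{theorem: sz.nagy-foias unitary 2-dilation}, proves the recursion $\langle X_n\xi,\xi\rangle=\inf_\eta\bigl\langle\left[\begin{smallmatrix}I&\frac12T^*\\ \frac12T&X_{n-1}\end{smallmatrix}\right]\left[\begin{smallmatrix}\xi\\ \eta\end{smallmatrix}\right],\left[\begin{smallmatrix}\xi\\ \eta\end{smallmatrix}\right]\bigr\rangle$ by factoring a Toeplitz-type Gram matrix, and obtains maximality by the induction $X_0=I\geq Y$, $X_{n-1}\geq Y\Rightarrow X_n\geq Y$, which needs no invertibility anywhere.

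The existence side is also not actually established. You assert $\mathcal F\neq\emptyset$ ``by compressing a suitable positive block operator built from $U$'' without exhibiting it---that construction is precisely the technical heart of the paper's proof (the $X_n$ and the factorization just mentioned), not a remark one can wave at. Your iteration moreover needs a \emph{strictly positive} starting point $Y_0\in\mathcal F$, which can fail to exist (your own example $T=2E_{21}$ has $\mathcal F=\{E_{22}\}$), and the two remedies you name are left unexecuted: pseudo-inverses require reworking every Schur-complement step, while the regularization $T\mapsto(1-\varepsilon)T$ requires (i) showing that the regularized $\mathcal F_\varepsilon$ does contain a strictly positive element, and (ii) passing the infimum identity and the maximality through the limit $\varepsilon\to0^+$; since $Y\mapsto\inf_\eta\langle\cdot\rangle$ is only an infimum of WOT-continuous affine functions, hence upper semicontinuous, one of the two inequalities in \eqref{equation: ando lemma 1:0} does not survive a WOT-cluster-point extraction for free. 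As written, the proposal establishes neither that a suitable $X$ exists nor that the $X$ it would produce is the maximum of \eqref{equation: ando lemma 1:00}.
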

\begin{proof}
We assume that $w(T)\leq1$. By  \cref{theorem: sz.nagy-foias unitary 2-dilation} there exist a Hilbert space $K\supset H$ and a unitary $U$ in $B(K)$ such that $T^k=2P_HU^k|_H$, $k\in\NN$. We will define a sequence $\{X_n\}\subset B(H)$ in the following way: we start with $X_0=I$, and put $X_n=P_H(I-Q_n)P_H$, where $Q_n$ is the projection onto $\overline\spann\bigcup_{k=1}^nU^{*k}H$. Since by construction these subspaces are increasing on $n$, we have
\[
I=X_0\geq X_1\geq X_2\geq\cdots \geq0.
\]
So the sequence converges strongly to a positive operator $X\in B(H)$---note that we could have defined $X$ directly, but it provides no obvious benefit to the proof.
For $\xi\in H$, we have
\[
\langle U^k\xi,\xi\rangle=\langle U^k\xi,P_H\xi\rangle=\langle P_HU^k|_H\xi,\xi\rangle
=\tfrac12\,\langle T^k\xi,\xi\rangle,\ \ \ \  k\in\NN.
\]Define $A\in M_{n+1}(B(H))$ by \[ A_{kj}=\begin{cases}I,&\ j=k\\ \tfrac12\,T^{*(j-k)},&\ k<j\\ \tfrac12\,T^{(k-j)},&\ k>j\end{cases}
\]
We can write, for $\xi\in H$, and using $\xi_0=\xi$,
\begin{align*}
\langle X_n\xi,\xi\rangle
&=\langle (I-Q_n)\xi,\xi\rangle
=\|(I-Q_n)\xi\|^2
=\|\xi-Q_n\xi\|^2
=\dist(\xi,Q_nH)^2 \abajo
&=\inf\left\{\left\|\xi+\sum_{k=1}^nU^{*k}\xi_k\right\|^2:\ \xi_1,\ldots,\xi_n\in H\right\} \abajo
&= \inf\left\{\left\|\sum_{k=0}^nU^{*k}\xi_k\right\|^2:\ \xi_1,\ldots,\xi_n\in H\right\} \abajo
&= \inf\left\{\sum_{k,j=0}^n\langle U^{*(k-j)}\xi_k,\xi_j\rangle:\ \xi_1,\ldots,\xi_n\in H\right\} \abajo
&= \inf_{\xi_j\in H}\left\{\sum_{k=0}^n\langle\xi_k,\xi_k\rangle
+\sum_{j<k}\langle U^{*(k-j)}\xi_k,\xi_j\rangle
+\sum_{k<j}\langle U^{(j-k)}\xi_k,\xi_j\rangle\right\} \abajo
&= \inf_{\xi_j\in H}\left\{\sum_{k=0}^n\langle\xi_k,\xi_k\rangle
+\sum_{j<k}\langle \tfrac12\,T^{*(k-j)}\xi_k,\xi_j\rangle
+\sum_{k<j}\langle \tfrac12\,T^{(j-k)}\xi_k,\xi_j\rangle \right\} \abajo
&= \inf\left\{\sum_{k,j=0}^n\langle A_{jk}\xi_k,\xi_j\rangle
:\ \xi_1,\ldots,\xi_n\in H\right\} \abajo
&=\inf_{\xi_j\in H}\left\{  \scaleleftright[1.75ex]{<}{{\setlength{\arraycolsep}{0pt}\begin{bmatrix} I&\tfrac12\,T^*&\cdots& \tfrac12\,T^{*(n-1)}&\tfrac12\,T^{*n} \\
\tfrac12\,T& I & & &\vdots \\
\vdots & & \ddots & &\vdots \\
\vdots & &  & I & \tfrac12\,T^*\\
\tfrac12\,T^n&\tfrac12\,T^{n-1}&\cdots &\tfrac12\,T & I\end{bmatrix}\begin{bmatrix}\xi\\ \xi_1\\ \vdots\\ \vdots \\ \xi_n\end{bmatrix},
\begin{bmatrix}\xi\\ \xi_1\\ \vdots\\ \vdots \\ \xi_n\end{bmatrix}}
}{>} \right\}.
\end{align*}
We can write the matrix $A$  as
\[
\setlength{\arraycolsep}{1pt}A=\begin{bmatrix} I\ & T^*&\cdots& \cdots& T^{*n} \\
0& I & & &\vdots \\
\vdots & & \ddots & &\vdots \\
\vdots & &  & I &  T^*\\
0&0&\cdots &0 & I\end{bmatrix}
\setlength{\arraycolsep}{0pt}\begin{bmatrix} I&-\tfrac12\,T^*&0&\cdots& 0 \\
-\tfrac12\,T& I & & \ddots&\vdots \\
0 & & \ddots & &0 \\
\vdots & \ddots&  & I & -\tfrac12\,T^*\\
0&\cdots& 0&-\tfrac12\,T & I\end{bmatrix}
\setlength{\arraycolsep}{2pt}\begin{bmatrix}
I&0&\cdots&\cdots&0\\
T&I& 0&\cdots&0\\
\vdots& & \ddots&\ddots \\
\vdots & & & I &0\\
T^n&\cdots&\cdots&T&I
\end{bmatrix}.
\]
As the invertible triangular block matrix preserves the first entry of the $n$-tuple vector,
and as the infimum   is taken over \emph{all}   $n$-tuples in $H$, we obtain
\begin{equation}\label{equation: ando lemma 1:1}
\begin{aligned}
\langle X_n\xi,\xi\rangle
&=\inf\left\{  \scaleleftright[1.75ex]{<}{{\setlength{\arraycolsep}{2pt}\setlength{\arraycolsep}{0pt}\begin{bmatrix} I&\tfrac12\,T^*&0&\cdots& 0 \\
\tfrac12\,T& I & & \ddots&\vdots \\
0 & & \ddots & &0 \\
\vdots & \ddots&  & I & \tfrac12\,T^*\\
0&\cdots& 0&\tfrac12\,T & I\end{bmatrix}\begin{bmatrix}\xi\\ \xi_1\\ \vdots\\ \vdots \\ \xi_n\end{bmatrix},
\begin{bmatrix}\xi\\ \xi_1\\ \vdots\\ \vdots \\ \xi_n\end{bmatrix}}
}{>} :\  \xi_1,\ldots,\xi_n\in H\right\} \abajo
&=\inf\{\langle R_n\tilde\xi,\tilde\xi\rangle:\ \tilde\xi=\begin{bmatrix}\xi&\xi_1&\cdots&\xi_n\end{bmatrix}^\transpose ,\ \xi_1,\ldots,\xi_n\in H\},
\end{aligned}
\end{equation}
where we use $R_n$ to denote the $(n+1)\times(n+1)$ tri-diagonal block-matrix from the previous line.
We were able to remove the negative signs because the infimum is taken over all $n$-tuples $\xi_1,\ldots,\xi_n\in H$; in particular, we can use $\xi_1,-\xi_2,\xi_3,-\xi_4,\ldots,(-1)^{n+1}\xi_n$.

Now we will use to our advantage the fact that each $R_n$ contains $R_{n-1}$ in its lower right corner. We have
\begin{equation}\label{equation: ando lemma 1:2}
\begin{aligned}
\langle R_n\tilde\xi,\tilde\xi\rangle
&=\langle\xi,\xi\rangle+\re\langle T\xi,\xi_1\rangle + \langle R_{n-1}\hat\xi,\hat\xi\rangle,
\end{aligned}
\end{equation}
with $\hat\xi=\begin{bmatrix}\xi_1&\cdots&\xi_n\end{bmatrix}^\transpose $. Fix $\varepsilon>0$; by \eqref{equation: ando lemma 1:1}, applied to $n-1$ and $\xi_1$, we can choose $\xi_2,\ldots,\xi_n\in H$ so that $\langle R_{n-1}\hat\xi,\hat\xi\rangle\leq\langle X_{n-1}\xi_1,\xi_1\rangle+\varepsilon$. Thus,
\begin{align*}
\left\langle\begin{bmatrix}I& \tfrac12\,T^* \\ \tfrac12\,T & X_{n-1}\end{bmatrix}\begin{bmatrix}\xi\\ \xi_1\end{bmatrix},
\begin{bmatrix}\xi\\ \xi_1\end{bmatrix}\right\rangle
&=\langle\xi,\xi\rangle+\re\langle T\xi,\xi_1\rangle
+\langle X_{n-1}\xi_1,\xi_1\rangle \abajo
&\geq \langle\xi,\xi\rangle+\re\langle T\xi,\xi_1\rangle + \langle R_{n-1}\hat\xi,\hat\xi\rangle -\varepsilon \abajo
&=\langle R_n\tilde\xi,\tilde\xi\rangle - \varepsilon
\geq \langle X_n\xi,\xi\rangle-\varepsilon.
\end{align*}
For an arbitrary $\xi_1,\ldots,\xi_n$,
\begin{align*}
\left\langle\begin{bmatrix}I& \tfrac12\,T^* \\ \tfrac12\,T & X_{n-1}\end{bmatrix}\begin{bmatrix}\xi\\ \xi_1\end{bmatrix},
\begin{bmatrix}\xi\\ \xi_1\end{bmatrix}\right\rangle
&=\langle\xi,\xi\rangle+\re\langle T\xi,\xi_1\rangle
+\langle X_{n-1}\xi_1,\xi_1\rangle \abajo
&\leq \langle\xi,\xi\rangle+\re\langle T\xi,\xi_1\rangle + \langle R_{n-1}\hat\xi,\hat\xi\rangle \abajo
&=\langle R_n\tilde\xi,\tilde\xi\rangle
\end{align*}

It follows from \eqref{equation: ando lemma 1:1} and the last two estimates (writing $\eta$ instead of $\xi_1$), that
\begin{equation}\label{equation: ando lemma 1:3}
\langle X_n\xi,\xi\rangle = \inf\left\{\left\langle\begin{bmatrix}I& \tfrac12\,T^* \\ \tfrac12\,T & X_{n-1}\end{bmatrix}\begin{bmatrix}\xi\\ \eta\end{bmatrix},
\begin{bmatrix}\xi\\ \eta\end{bmatrix}\right\rangle :\ \eta\in H\right\}.
\end{equation}
Taking limit in \eqref{equation: ando lemma 1:3},
\[
\langle X\xi,\xi\rangle = \inf\left\{\left\langle\begin{bmatrix}I& \tfrac12\,T^* \\ \tfrac12\,T & X\end{bmatrix}\begin{bmatrix}\xi\\ \eta\end{bmatrix},
\begin{bmatrix}\xi\\ \eta\end{bmatrix}\right\rangle :\ \eta\in H\right\},
\]
as claimed in \eqref{equation: ando lemma 1:0}.

We have, for all $\xi,\eta\in H$,
\begin{equation}\label{equation: ando lemma 1:4}
\left\langle\begin{bmatrix}X&0\\0&0\end{bmatrix}\begin{bmatrix}\xi\\ \eta\end{bmatrix},\begin{bmatrix}\xi\\ \eta\end{bmatrix}\right\rangle=\langle X\xi,\xi\rangle\leq\left\langle\begin{bmatrix}I& \tfrac12\,T^* \\ \tfrac12\,T & X\end{bmatrix}\begin{bmatrix}\xi\\ \eta\end{bmatrix},
\begin{bmatrix}\xi\\ \eta\end{bmatrix}\right\rangle.
\end{equation}
Thus
\[
\begin{bmatrix}I-X&\tfrac12\,T^* \\ \tfrac12\,T &X\end{bmatrix}\geq0,
\]
and $X$ belongs to the set in \eqref{equation: ando lemma 1:00}.
Now assume that $0\leq Y\leq I$ and that $\begin{bmatrix}I-Y&\tfrac12\,T^* \\ \tfrac12\,T &Y\end{bmatrix}\geq0$; this we can write as $\begin{bmatrix}I&\tfrac12\,T^* \\ \tfrac12\,T &Y\end{bmatrix}\geq\begin{bmatrix} Y&0\\0&0\end{bmatrix}$. By assumption, $X_0=I\geq Y$. Suppose that $X_{n-1}\geq Y$. Then, for each $\xi\in H$,
\begin{equation*}
\begin{aligned}
\langle X_n\xi,\xi\rangle
&=\inf\left\{\left\langle\begin{bmatrix}I& \tfrac12\,T^* \\ \tfrac12\,T & X_{n-1}\end{bmatrix}\begin{bmatrix}\xi\\ \eta\end{bmatrix},
\begin{bmatrix}\xi\\ \eta\end{bmatrix}\right\rangle :\ \eta\in H\right\} \abajo
&=\inf\{ \langle\xi,\xi\rangle+\re\langle T\xi,\eta\rangle+\langle X_{n-1}\eta,\eta\rangle: \eta\in H\} \abajo
&\geq\inf\{ \langle\xi,\xi\rangle+\re\langle T\xi,\eta\rangle+\langle Y\eta,\eta\rangle: \eta\in H\} \abajo
&=\inf\left\{\left\langle\begin{bmatrix}I& \tfrac12\,T^* \\ \tfrac12\,T &
Y\end{bmatrix}\begin{bmatrix}\xi\\ \eta\end{bmatrix},
\begin{bmatrix}\xi\\ \eta\end{bmatrix}\right\rangle :\ \eta\in H\right\} \abajo
&\geq\inf\left\{\left\langle\begin{bmatrix}Y& 0\\ 0&
0\end{bmatrix}\begin{bmatrix}\xi\\ \eta\end{bmatrix},
\begin{bmatrix}\xi\\ \eta\end{bmatrix}\right\rangle :\ \eta\in H\right\} \abajo
&=\langle Y\xi,\xi\rangle.
\end{aligned}
\end{equation*}
It follows by induction that $X_n\geq Y$ for all $n$, and thus $X\geq Y$.
\end{proof}

If the manipulations in the proof of  \cref{lemma: ando lemma 1} were not impressive enough, Ando keeps going at it, with the following striking characterization of the numerical radius:

\begin{theorem}[Ando \cite{ando1973}]\label{theorem: ando's characterization}
Let $T\in B(H)$. Then the following statements are equivalent:
\begin{enumerate}
\item\label{theorem: ando's characterization:1} $w(T)\leq1$;
\item\label{theorem: ando's characterization:2} there exist $Y,Z\in B(H)$ with $Y$ selfadjoint, $\|Y\|\leq1$, and $\|Z\|\leq1$ such that  \[T=(I+Y)^{1/2}Z(I-Y)^{1/2}.\]
\end{enumerate}
When the above conditions are satisfied, the set
\begin{equation}\label{equation: ando's characterization:4}
\{Y=Y^*:\ \exists Z,\ \|Z\|\leq1\ \text{ and }T=(I+Y)^{1/2}Z(I-Y)^{1/2}\}
\end{equation}
 admits a maximum $Y_{\max}$ and a minimum $Y_{\min}$. The corresponding $Z_{\max}$ is isometric on the range of $I-Y_{\max}$, and $Z_{\min}$ is isometric on the range of $I+Y_{\min}$.
\end{theorem}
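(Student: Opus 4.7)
For $(\ref{theorem: ando's characterization:2})\Rightarrow(\ref{theorem: ando's characterization:1})$, the factorization $T=(I+Y)^{1/2}Z(I-Y)^{1/2}$ with the stated norm bounds is equivalent to the block-matrix positivity
\[
\begin{bmatrix}I+Y & T\\ T^* & I-Y\end{bmatrix}=\begin{bmatrix}(I+Y)^{1/2}&\\ &(I-Y)^{1/2}\end{bmatrix}\begin{bmatrix}I & Z\\ Z^* & I\end{bmatrix}\begin{bmatrix}(I+Y)^{1/2}&\\ &(I-Y)^{1/2}\end{bmatrix}\geq 0,
\]
since the middle matrix is positive precisely when $\|Z\|\leq 1$. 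Testing this inequality on vectors $(\xi,\bar\lambda\xi)$ with $\lambda\in\TT$ yields $\|\xi\|^2+\langle \re(\bar\lambda T)\xi,\xi\rangle\geq 0$, hence $I+\re(\bar\lambda T)\geq 0$; \cref{proposition: characterization of the numerical range} then gives $w(T)\leq 1$.

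For the converse and existence of $Y_{\max}$, \cref{lemma: ando lemma 1} furnishes an $X\in[0,I]$ that is maximum among those with $\begin{bmatrix}I-X & \tfrac12 T^*\\ \tfrac12 T & X\end{bmatrix}\geq 0$; setting $Y=2X-I$ and swapping block rows converts this into the positivity displayed above, so Douglas's factorization of positive $2\times 2$ blocks supplies a contraction $Z$ with $T=(I+Y)^{1/2}Z(I-Y)^{1/2}$. The affine order isomorphism $X\leftrightarrow 2X-I$ identifies the Ando set with the set in \eqref{equation: ando's characterization:4}, and so $Y_{\max}=2X-I$. For the minimum, the adjoint identity $T=(I+Y)^{1/2}Z(I-Y)^{1/2}\iff T^*=(I-Y)^{1/2}Z^*(I+Y)^{1/2}$ shows that $Y\mapsto-Y$ is a bijection between the set in \eqref{equation: ando's characterization:4} and the analogous set for $T^*$ (with $Z\mapsto Z^*$); applying the previous argument to $T^*$ (noting $w(T^*)=w(T)\leq 1$) gives $Y_{\min}(T)=-Y_{\max}(T^*)$.

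The isometric property of $Z_{\max}$ is the crux. Set $Y=Y_{\max}$ and $\Delta=(I-Y)^{1/2}(I-Z_{\max}^*Z_{\max})(I-Y)^{1/2}\geq 0$: this is the $(1,1)$-Schur complement of $\begin{bmatrix}I-Y & T^*\\ T & I+Y\end{bmatrix}$, so $Z_{\max}$ being isometric on $\overline{\mathrm{range}(I-Y)}$ is equivalent to $\Delta=0$. If $\Delta\neq 0$, I would put $Y'=Y+\epsilon\Delta$ for small $\epsilon>0$: the estimate $\Delta\leq I-Y$ yields $\|Y'\|\leq 1$, and a Neumann-series expansion of $(I+Y')^{-1}$ on $\overline{\mathrm{range}(I+Y)}$ (with a direct treatment on $\ker(I+Y)$, where the block positivity forces $T^*|_{\ker(I+Y)}=0$) shows that the perturbed Schur complement is $(1-\epsilon)\Delta+\epsilon\cdot(\text{positive})+O(\epsilon^2)\geq 0$; hence $Y'$ belongs to the set in \eqref{equation: ando's characterization:4}, contradicting the maximality of $Y$ since $Y'\gneq Y$. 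For $Z_{\min}$, the same step applied to $T^*$ produces a contraction $A$ with $T^*=(I+Y_{\max}(T^*))^{1/2}A(I-Y_{\max}(T^*))^{1/2}$ and $A$ isometric on $M:=\overline{\mathrm{range}(I+Y_{\min})}$. Since only $A|_M$ is constrained by the decomposition, I would extend $A$ to a unitary $\tilde A$ on $H$ (the deficiency dimensions $\dim M^\perp$ and $\dim(AM)^\perp$ agree); adjoints then give $T=(I+Y_{\min})^{1/2}\tilde A^*(I-Y_{\min})^{1/2}$, and since $(\tilde A^*)^*\tilde A^*=\tilde A\tilde A^*=I$, the contraction $Z_{\min}:=\tilde A^*$ is isometric on all of $H$, in particular on $\overline{\mathrm{range}(I+Y_{\min})}$.

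The main obstacle is the Schur-complement perturbation establishing the isometric property of $Z_{\max}$. Under invertibility of $I+Y$ it reduces to a standard Neumann expansion, but in general one must either reduce to $\overline{\mathrm{range}(I+Y)}$ or invoke a generalized Schur complement, leveraging the rigidity of the block positivity on $\ker(I\pm Y)$ to render the kernel directions harmless.
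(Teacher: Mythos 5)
Your architecture coincides with the paper's where it matters most: both proofs rest on \cref{lemma: ando lemma 1}, both identify $Y_{\max}=2X-I$ through the affine order isomorphism between \eqref{equation: ando's characterization:4} and the set in \eqref{equation: ando lemma 1:00}, and both get $Y_{\min}$ by passing to $T^*$. Your route to the contraction $Z$ (Douglas factorization of the positive $2\times2$ block) is a legitimate shortcut; the paper instead constructs $Z$ by hand via a sesquilinear form on the dense manifolds built from $\mathrm{ran}(I-X)^{1/2}$ and $\mathrm{ran}\,X^{1/2}$, and the payoff of that extra labour is precisely the step you identify as the crux: the isometry of $Z_{\max}$ falls out at once from the variational identity \eqref{equation: ando's characterization:2}, i.e.\ from the fact that $X$ is defined by a distance formula whose infimum is nearly attained. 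So the paper never needs a rigidity-from-maximality argument at all.

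Your substitute --- perturb $Y_{\max}$ by $\epsilon\Delta$ and contradict maximality --- is correct in spirit, but the execution you sketch does not close: on $\overline{\mathrm{ran}(I+Y)}$ the operator $I+Y$ need not be bounded below, so there is no Neumann series for $(I+Y')^{-1}$ there, and a conclusion of the form $(1-\epsilon)\Delta+O(\epsilon^2)$ proves nothing when $\Delta$ is not bounded below on its support. The repair avoids inverses entirely: positivity of $\bigl[\begin{smallmatrix}A&B\\B^*&C\end{smallmatrix}\bigr]$ with $A,C\geq0$ is equivalent to $|\langle B\eta,\xi\rangle|^2\leq\langle A\xi,\xi\rangle\,\langle C\eta,\eta\rangle$ for all $\xi,\eta$; writing $a=\langle(I+Y)\xi,\xi\rangle$, $b=\langle(I-Y)\eta,\eta\rangle$, $d_1=\langle\Delta\xi,\xi\rangle$, $d_2=\langle\Delta\eta,\eta\rangle$, one has $|\langle T\eta,\xi\rangle|^2\leq a(b-d_2)$ and $0\leq d_2\leq b$, whence $(a+\epsilon d_1)(b-\epsilon d_2)\geq a(b-d_2)$ for $0<\epsilon\leq1$; so $Y+\epsilon\Delta$ lies in \eqref{equation: ando's characterization:4} and maximality forces $\Delta=0$. (This even shows that \emph{every} admissible $Z$ for $Y_{\max}$ is isometric on $\mathrm{ran}(I-Y_{\max})^{1/2}$, slightly more than the paper states.) Finally, for $Z_{\min}$ you have correctly noticed a point the paper passes over in one line --- the adjoint of $Z_{\max}(T^*)$ need not itself be isometric on $\mathrm{ran}(I+Y_{\min})$ --- but your fix by unitary extension is not automatic in infinite dimensions, where the deficiency subspaces $M^\perp$ and $(AM)^\perp$ need not have equal dimension; that step requires either a separate argument or a reformulation of the claim in terms of $Z_{\min}^*$.
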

\begin{proof}
\eqref{theorem: ando's characterization:1}$\implies$ \eqref{theorem: ando's characterization:2}. By \cref{lemma: ando lemma 1} there exists a positive contraction $X$ satisfying \eqref{equation: ando lemma 1:0}.We can write this as
\[
\langle X\xi,\xi\rangle=\inf\{\langle\xi,\xi\rangle+\re\langle T\xi,\eta\rangle+\langle X\eta,\eta\rangle:\ \eta\in H\}.
\]
Flipping a few terms around, we get
\[
\langle (I-X)\xi,\xi\rangle = \sup\{-\re\langle T\xi,\eta\rangle-\|X^{1/2}\eta\|^2:\ \eta\in H\}.
\]
Since $\eta$ moves over all of $H$, and since the second term is invariant if we replace $\eta$ with $\lambda\eta$ for $\lambda\in\TT$, we get
\begin{equation}\label{equation: ando second form of the sup}
\|(I-X)^{1/2}\xi\|^2=\sup\{|\langle T\xi,\eta\rangle|-\|X^{1/2}\eta\|^2:\ \eta\in H\}.
\end{equation}
As we can write $t\eta$ instead of $\eta$, we have shown that for a fixed $\eta$
\begin{equation}\label{equation: ando quadratic}
\|(I-X)^{1/2}\xi\|^2-t|\langle T\xi,\eta\rangle|+t^2\|X^{1/2}\eta\|^2\geq0,\ \ t\in \RR.
\end{equation}
The discriminant inequality for this quadratic is   \[ |\langle T\xi,\eta\rangle|^2\leq 4\|(I-X)^{1/2}\xi\|^2\,\|X^{1/2}\eta\|^2,\] which we write as
\begin{equation}\label{equation: ando's characterization:1}
\tfrac12\,|\langle T\xi,\eta\rangle|\leq \|(I-X)^{1/2}\xi\|\,\|X^{1/2}\eta\|.
\end{equation}
Because of the supremum in \eqref{equation: ando second form of the sup}, for any given $\xi$ there exists   $\eta$ such that the quadratic in \eqref{equation: ando quadratic} is arbitrarily close to zero. Thus the discriminant can be made arbitrarily close to zero by such an $\eta$, and the inequality in \eqref{equation: ando's characterization:1} can be made arbitrarily close to an equality. So
\begin{equation}\label{equation: ando's characterization:2}
\|(I-X)^{1/2}\xi\|=\sup\left\{\frac{\tfrac12\,|\langle T\xi,\eta\rangle|}{\|X^{1/2}\eta\|}:\ X^{1/2}\eta\ne0\right\}.
\end{equation}

We now construct  a densely-defined sesquilinear form  in the following way. Let $H_0,H_1\subset H$ be the following dense (due to $X$ being selfadjoint) linear manifolds:
\begin{align*}
H_0&=\{\xi_0+(I-X)^{1/2}\xi:\ \xi_0\in \ker (I-X),\ \xi\in H\}, \abajo
H_1&=\{\eta_0+X^{1/2}\eta:\ \eta_0\in \ker X,\ \eta\in H\}.
\end{align*}
Then we define, on $H_0\times H_1$, a form
\begin{equation}\label{equation: ando's characterization:form}
[\xi_0+(I-X)^{1/2}\xi,\eta_0+X^{1/2}\eta]:=\frac12\,\langle T\xi,\eta\rangle.
\end{equation}
By \eqref{equation: ando's characterization:1} the above form is well-defined and, since the kernel and range of $X$ are orthogonal to each other,
\begin{align*}
|[\xi_0+(I-X)^{1/2}\xi,\eta_0+X^{1/2}\eta]|&=\frac12\,|\langle T\xi,\eta\rangle|\leq\|(I-X)^{1/2}\xi\|\,\|X^{1/2}\eta\| \abajo
&\leq \|\xi_0+(I-X)^{1/2}\xi\|\,\|\eta_0+X^{1/2}\eta\|.
\end{align*}
The sesquilinear form is thus bounded with norm at most one: we can then extended it   to all of $H\times H$. By the Riesz Representation Theorem there exists a linear contraction $Z\in B(H)$, with $Z|_{\ker(I-X)}=0$ and $Z^*|_{\ker X}=0$, and such that
\begin{equation}\label{equation: ando's characterization:3}
\tfrac12\,\langle T\xi,\eta\rangle = \langle Z(I-X)^{1/2}\xi,X^{1/2}\eta\rangle,\ \ \ \ \ \xi,\eta\in H.
\end{equation}
In particular, $\tfrac12\,T=X^{1/2}Z(I-X)^{1/2}$. If we now let $Y=2X-I$, then $Y=Y^*$ and
\[
(I+Y)^{1/2}Z(I-Y)^{1/2}=(2X)^{1/2}Z(2I-2X)^{1/2}=2X^{1/2}Z(I-X)^{1/2}=T.
\]
Using \eqref{equation: ando's characterization:2} and \eqref{equation: ando's characterization:3}, for a fixed $\xi\in H$ and $\varepsilon>0$ there exists $\eta\in H$ with
\begin{align*}
\|(I-X)^{1/2}\xi\|
&\leq\frac{\tfrac12\,|\langle T\xi,\eta\rangle|}{\|X^{1/2}\eta\|}+\varepsilon
=\frac{|\langle Z(I-X)^{1/2}\xi,X^{1/2}\eta\rangle|}{\|X^{1/2}\eta\|}+\varepsilon \abajo
&\leq\|Z(I-X)^{1/2}\xi\|+\varepsilon.
\end{align*}
But $Z$ is a contraction and we can do this for all $\varepsilon>0$, so $\|Z(I-X)^{1/2}\xi\|=\|(I-X)^{1/2}\xi\|$ for all $\xi\in H$; a fortiori, as we can replace $\xi$ with $(I-X)^{1/2}\xi$, and writing $Z_{\max}$ for the contraction $Z$ we constructed, we get that \[
\|Z_{\max}(I-X)\xi\|=\|(I-X)\xi\|,\ \ \  \text{ for all }\xi\in H.
\] From $I-X=\tfrac12\,(I-Y)$, we obtain that $Z_{\max}$ is isometric on the range of $I-Y$. This $Y=2X-I$ we constructed, that we will denote as $Y_{\max}$, is  the maximum of the set in \eqref{equation: ando's characterization:4}. Indeed, if $T=(I+Y_0)^{1/2}Z_0(I-Y_0)^{1/2}$ for selfadjoint $Y_0$ and contractive $Z_0$, then
\begin{align*}
\begin{bmatrix}
I-\tfrac12\,(I+Y_0)&\tfrac12\,T^*\\ \tfrac12\,T&\tfrac12 (I+Y_0)\end{bmatrix}
&=\tfrac12\,\begin{bmatrix} I-Y_0 & R \\
R^* & I+Y_0\end{bmatrix} \abajo
& = M\,\begin{bmatrix} I & Z_0\\ Z_0^* & I\end{bmatrix}\,M^*\geq0,
\end{align*}
where $R=(I-Y_0)^{1/2} Z_0^*(I+Y_0)^{1/2}$ and $M=\frac1{\sqrt2}\begin{bmatrix}(I-Y_0)^{1/2} &0\\0& (I+Y_0)^{1/2}\end{bmatrix}$. By the maximality of $X$ in \eqref{equation: ando lemma 1:00}, we have $\tfrac12\,(I+Y_0)\leq X$, which we can write as
\[
Y_0\leq 2X-I=Y_{\max}.
\]
All of the above can be done for $T^*$, so there is a maximum, say $Y_*$, corresponding to  $T^*$. By taking the adjoint, we can rewrite any decomposition $T=(I+Y)^{1/2}Z(I-Y)^{1/2}$ as $T^*=(I+(-Y))^{1/2}Z^*(I-(-Y))^{1/2}$. It follows that $-Y\leq Y_*$ for all $Y$ that give a decomposition of $T$. In other words, $-Y_*=Y_{\min}$.

\eqref{theorem: ando's characterization:2}$\implies$ \eqref{theorem: ando's characterization:1} If
$T=(I+Y)^{1/2}Z(I-Y)^{1/2}$ for a contraction $Z$ then, using the trivial number inequality $|ab|\leq\frac12\,(|a|^2+|b|^2)$,
\begin{align*}
|\langle T\xi,\xi\rangle| &= |\langle Z(I-Y)^{1/2}\xi,(I+Y)^{1/2}\xi\rangle|
\leq\|(I-Y)^{1/2}\xi\|\,\|(I+Y)^{1/2}\xi\| \abajo
&\leq\frac12\,(\|(I-Y)^{1/2}\xi\|^2+\|(I+Y)^{1/2}\xi\|^2) \abajo
&=\frac12\,(\langle (I-Y)\xi,\xi\rangle+\langle (I+Y)\xi,\xi\rangle)
=\langle \xi,\xi\rangle. \qedhere
\end{align*}
\end{proof}

\begin{remark}\label{remark: Ando's X,Y,Z for E_21}
Let us find the above decomposition for the case $T=2E_{21}$. We have $w(T)=1$, so the above results apply. In light of \cref{remark: E_21 as a corner of the bilateral shift}, we may take $K=\ell^2(\ZZ)$, $U$ the bilateral shift, and $H=\spann\{e_1,e_2\}$. Then $U^*H=\spann\{e_{0},e_1\}$, and
\[
\spann\bigcup_{k=1}^n U^{*k}H=\spann\{e_{-n+1},e_{-n+2},\ldots,e_1\}.
\]
So, in \cref{lemma: ando lemma 1}, $I-Q_n=\sum_{k=n}^\infty E_{-k,-k}+\sum_{k=2}^\infty E_{kk}$ and $P_H=E_{11}+E_{22}$. We get that
$X_n=P_H(I-Q_n)P_H=E_{22}$ for all $n$. So $X=E_{22}$. Then $\left\langle X\begin{bmatrix}\alpha\\ \beta\end{bmatrix},\begin{bmatrix}\alpha\\ \beta\end{bmatrix}\right\rangle=|\beta|^2$; and
\[
\scaleleftright[1.75ex]{<}{\begin{bmatrix} I&\tfrac12\,T^*\\ \tfrac12\,T&X\end{bmatrix}\begin{bmatrix}\alpha\\ \beta\\ \gamma\\ \delta\end{bmatrix},\begin{bmatrix}\alpha\\ \beta\\ \gamma\\ \delta\end{bmatrix} }{>}
=\scaleleftright[1.75ex]{<}{\begin{bmatrix} 1&0&0&1\\ 0&1&0&0\\ 0&0&0&0\\ 1&0&0&1\end{bmatrix}\begin{bmatrix}\alpha\\ \beta\\ \gamma\\ \delta\end{bmatrix},\begin{bmatrix}\alpha\\ \beta\\ \gamma\\ \delta\end{bmatrix} }{>}=|\alpha+\delta|^2+|\beta|^2,
\]
with the infimum over $\gamma,\delta$ being $|\beta|^2$ (achieved when $\delta=-\alpha$). If $Y$ satisfies \[\begin{bmatrix}I-Y&E_{12}\\ E_{21}&Y\end{bmatrix}\geq0,\]
we immediately get from diagonal entries that $0\leq Y\leq I$, and considering the $2\times 2$ matrix formed by the corner entries, we have
\[
\begin{bmatrix} 1-Y_{11} &1 \\ 1& Y_{22}\end{bmatrix}\geq0.
\]
This can only be satisfied if $Y_{11}=0$, $Y_{22}=1$. From $Y_{11}=0$ and positivity, we obtain $Y_{12}=Y_{21}=0$ (a zero in the main diagonal forces its column and row to be zero). Thus $Y=E_{22}=X$. We have shown that, in this example, the set in \eqref{equation: ando lemma 1:00} consists of just $X$.

Looking into \cref{theorem: ando's characterization}, we have $Y=2X-I=E_{22}-E_{11}$. The proof in that theorem constructs $Z$ via the bilinear form on $H_0\times H_1$. In this case, $(I-X)^{1/2}=E_{11}$, $X^{1/2}=E_{22}$, so the form is
\begin{align*}
\left[\begin{bmatrix}\alpha \\ \beta\end{bmatrix},\begin{bmatrix}\gamma \\ \delta\end{bmatrix} \right]
&=\tfrac12\,\left\langle 2E_{21}\begin{bmatrix}\alpha \\ \beta\end{bmatrix},\begin{bmatrix}\gamma \\ \delta\end{bmatrix}\right\rangle
=\left\langle E_{21}\begin{bmatrix}\alpha \\ \beta\end{bmatrix},\begin{bmatrix}\gamma \\ \delta\end{bmatrix}\right\rangle \abajo
&=\left\langle E_{21}\,E_{11}\begin{bmatrix}\alpha \\ \beta\end{bmatrix},E_{22}\,\begin{bmatrix}\gamma \\ \delta\end{bmatrix}\right\rangle.
\end{align*}
Thus $Z=E_{21}$. The maximal decomposition is then
\[
2E_{21}=(I+Y)^{1/2}Z(I-Y)^{1/2}=(2E_{22})^{1/2}E_{21}(2E_{11})^{1/2}
\]
and, as we said,
\[
Y_{\max}=2X-I=E_{22}-E_{11}=\begin{bmatrix}-1&0\\0&1\end{bmatrix}.
\]
The computation that showed that $2X-I$ is maximum in the proof of \cref{theorem: ando's characterization} implies that if $X$ is unique, so is $Y$. Thus $Y_{\min}=Y_{\max}$ in this case.
\end{remark}

\begin{remark}
If instead we consider $T=E_{21}$, the situation is very different. It seems hard to follow the path all the way from \cref{theorem: positive-definite functions are compressions of unitary representations} to \cref{lemma: ando lemma 1} to find $X$ explicitly. Still, from \cref{lemma: ando lemma 1}, we know that we need to find the maximum of those selfadjoint $X$ such that $0\leq X\leq I$ and
\begin{equation}\label{equation: remark: Ando's X,Y,Z for E_21}
\begin{bmatrix} I-X & \tfrac12\,E_{12} \\ \tfrac12\,E_{21} & X\end{bmatrix}
=\begin{bmatrix} 1-X_{11}& -X_{12} & 0 &1/2\\ -X_{21} & 1-X_{22} & 0 &0 \\ 0 & 0 & X_{11} & X_{12} \\
1/2 & 0 & X_{21} & X_{22} \end{bmatrix}\geq0.
\end{equation}

It is easy to check that $X_0=\tfrac34\,E_{11}+E_{22}$ satisfies \eqref{equation: remark: Ando's X,Y,Z for E_21}, and so $X\geq\tfrac34\,E_{11}+E_{22}$.  We get immediately that $X_{11}\geq3/4$ and $X_{22}=1$ (since $I-X\geq0$). Again from $I-X\geq0$,
\[
\begin{bmatrix} 1-X_{11} & X_{12} \\ \overline{X_{12}} & 0\end{bmatrix}\geq0,
\]
and so $X_{12}=0$ by the positivity. If we now look, in \eqref{equation: remark: Ando's X,Y,Z for E_21}, at the $2\times 2$ matrix formed by the corner entries, we have
\[
\begin{bmatrix} 1-X_{11}& 1/2\\ 1/2 & 1\end{bmatrix}\geq0.
\]
Then $1-X_{11}\geq 1/4$, i.e., $X_{11}\leq 3/4$. So $X_{11}=3/4$, and $X=\begin{bmatrix} 3/4&0\\0&1\end{bmatrix}$. Now
\[
Y_{\max}=2X-I=\begin{bmatrix} 1/2&0\\0&1\end{bmatrix}.
\]
If we look again at the proof of \cref{theorem: ando's characterization}, we have $(I-X)^{1/2}=\tfrac12\,E_{11}$, $X^{1/2}=\tfrac{\sqrt3}2\,E_{11}+E_{22}$. Writing \eqref{equation: ando's characterization:3} explicitly, we immediately get $Z=E_{21}$. The maximal decomposition is then
\[
E_{21}=(I+Y_{\max})^{1/2} Z (I-Y_{\max})^{1/2}=\begin{bmatrix}\sqrt{3/2}&0\\0&\sqrt2\end{bmatrix}
\begin{bmatrix}0&0\\1&0\end{bmatrix}\begin{bmatrix}1/\sqrt2&0\\0&0\end{bmatrix}.
\]
As opposed to the previous case, though, different decompositions are possible. If we repeat the analysis above for $T^*=E_{12}$, we find that $X_*$ is now $E_{11}+\tfrac34\,E_{22}$. Its maximum $Y_*$ will be $2X_*-I=E_{11}+\tfrac12\,E_{22}$. Then, with respect to our original $T=E_{21}$, we have
\[
Y_{\min}=-Y_*=\begin{bmatrix}-1&0\\0&-1/2\end{bmatrix}.
\]
We can also get decompositions that do not come from $Y_{\max}$ nor $Y_{\min}$. For a trivial one, take $Y_0=0$, $Z_0=E_{21}$. Then, of course, $E_{21}=(I+0)^{1/2}E_{21}(I-0)^{1/2}$. Yet another fairly trivial decomposition can be found if $Y=E_{22}$. Then $(I+Y)^{1/2}=E_{11}+\sqrt2\,E_{22}$, and $(I-Y)^{1/2}=E_{11}$. So we get the decomposition
\[
E_{21}=\begin{bmatrix} 1&0\\0&\sqrt2\end{bmatrix}\begin{bmatrix}0&0\\1/\sqrt2&0\end{bmatrix}
\begin{bmatrix}1&0\\0&0\end{bmatrix}.
\]

\end{remark}

\bigskip

The following result is a well-known matricial characterization of the numerical radius. It uses \cref{theorem: ando's characterization} in an essential way.

\begin{corollary}\label{theorem: ando}
Let $T\in B(H)$. Then the following statements are equivalent:
\begin{enumerate}
\item $w(T)\leq1/2$;
\item there exists $A\in B(H)^+$, with $A\leq I$, such that
$
\begin{bmatrix}A&T^*\\ T& I-A\end{bmatrix}\geq0.
$
\end{enumerate}
\end{corollary}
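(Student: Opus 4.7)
The plan is to reduce this corollary directly to the main characterization \cref{theorem: ando's characterization} by rescaling $T$ by a factor of $2$, with the correspondence $A = \tfrac12(I-Y)$ (equivalently $Y = I - 2A$). Under this bijection, the condition $0 \leq A \leq I$ matches the condition $-I\leq Y\leq I$, $I-A = \tfrac12(I+Y)$, and the $2\times 2$ matrix in (2) turns into $\tfrac12\begin{bmatrix}I-Y & 2T^* \\ 2T & I+Y\end{bmatrix}$, which is precisely the kind of block matrix that appeared in the proof of the (2)$\Rightarrow$(1) part of \cref{theorem: ando's characterization}.

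For the implication (1)$\Rightarrow$(2), I would argue as follows. From $w(T)\leq 1/2$ we get $w(2T)\leq 1$, so by \cref{theorem: ando's characterization} there exist a selfadjoint contraction $Y$ and a contraction $Z$ with
\[
2T=(I+Y)^{1/2}Z(I-Y)^{1/2}.
\]
Set $A=\tfrac12(I-Y)$; then $0\leq A\leq I$ and $I-A=\tfrac12(I+Y)$. With
$M=\tfrac1{\sqrt2}\begin{bmatrix}(I-Y)^{1/2}&0\\0&(I+Y)^{1/2}\end{bmatrix}$,
a direct expansion (already carried out inside the proof of \cref{theorem: ando's characterization}) yields
\[
M\begin{bmatrix} I & Z^*\\ Z & I\end{bmatrix}M^*
= \tfrac12\begin{bmatrix} I-Y & 2T^*\\ 2T & I+Y\end{bmatrix}
=\begin{bmatrix}A & T^*\\ T & I-A\end{bmatrix},
\]
and since $\|Z\|\leq 1$ implies $\begin{bmatrix}I&Z^*\\Z&I\end{bmatrix}\geq 0$, positivity of the desired block matrix follows.

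For the converse (2)$\Rightarrow$(1), I would avoid invoking \cref{theorem: ando's characterization} and give a direct argument from positivity. Given the block positivity, for any $\xi\in H$ and $\lambda\in\TT$, testing against $\begin{bmatrix}\xi\\ \lambda\xi\end{bmatrix}$ gives
\[
\langle A\xi,\xi\rangle + 2\re\bar\lambda\langle T\xi,\xi\rangle + \langle (I-A)\xi,\xi\rangle
= \|\xi\|^2 + 2\re\bar\lambda\langle T\xi,\xi\rangle \geq 0.
\]
Choosing $\lambda\in\TT$ so that $\bar\lambda\langle T\xi,\xi\rangle = -|\langle T\xi,\xi\rangle|$ gives $2|\langle T\xi,\xi\rangle|\leq\|\xi\|^2$, hence $w(T)\leq 1/2$.

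There is no real obstacle here: all the work has been done in \cref{theorem: ando's characterization}, and the only point to be careful about is the scaling substitution $A\leftrightarrow\tfrac12(I-Y)$ that converts between the two formulations. The converse is essentially immediate from the definition of $w$.
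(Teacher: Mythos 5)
Your proposal is correct and follows essentially the same route as the paper: the forward direction applies \cref{theorem: ando's characterization} to $2T$ and factors the block matrix as $M\bigl[\begin{smallmatrix}I&Z^*\\ Z&I\end{smallmatrix}\bigr]M^*$, while the converse tests positivity against vectors $\bigl[\begin{smallmatrix}\xi\\ \lambda\xi\end{smallmatrix}\bigr]$ with $\lambda\in\TT$ chosen to make the cross term $-2|\langle T\xi,\xi\rangle|$. The only (cosmetic) divergence is that you take $A=\tfrac12(I-Y)$ where the paper takes $A=\tfrac12(I+Y)$, and your choice of $\lambda$ in the converse is stated more carefully; both are fine.
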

\begin{proof}
If $\begin{bmatrix}A&T^*\\ T& I-A\end{bmatrix}\geq0$, then for any $\xi,\eta\in H$
\[
0\leq \left\langle \begin{bmatrix}A&T^*\\ T& I-A\end{bmatrix}\begin{bmatrix} \xi\\ \eta\end{bmatrix},\begin{bmatrix}\xi\\ \eta\end{bmatrix}\right\rangle
=\langle A\xi,\xi\rangle+\langle (I-A)\eta,\eta\rangle+2\re\langle T\xi,\eta\rangle.
\]
Taking $\eta=\lambda\xi$ for  $\lambda\in\TT$  such that $\langle T\xi,\xi\rangle=\lambda\,|\langle T\xi,\xi\rangle|$, we get
\[
0\leq\|\xi\|^2-2|\langle T\xi,\xi\rangle|,
\]
implying $w(T)\leq1/2$.

Conversely, if $w(T)\leq1/2$, by \cref{theorem: ando's characterization} there exist a selfadjoint contraction $Y$ and  contraction $Z$ with $2T=(I+Y)^{1/2}Z(I-Y)^{1/2}$. Since $Z$ is contractive, the matrix $\begin{bmatrix} I&Z^*\\ Z&I\end{bmatrix}$ is positive; then
\begingroup
\setlength\arraycolsep{2pt}
\begin{align*}
\begin{bmatrix}I+Y&2T\\2T^*&I-Y\end{bmatrix}
&=\begin{bmatrix}I+Y&(I+Y)^{1/2}Z^*(I-Y)^{1/2}\\(I-Y)^{1/2}Z(I+Y)^{1/2}&I-Y\end{bmatrix}\\
&=\begin{bmatrix}(I+Y)^{1/2}&0\\0&(I-Y)^{1/2}\end{bmatrix}
\begin{bmatrix}I&Z^*\\ Z&I\end{bmatrix}
\begin{bmatrix}(I+Y)^{1/2}&0\\0&(I-Y)^{1/2}\end{bmatrix}\\
&\geq0.
\end{align*}
\endgroup
Multiplying by $1/2$ we get
$\begin{bmatrix}A&T^*\\ T& I-A\end{bmatrix}\geq0$, where $A=\frac12\,(I+Y)$.
\end{proof}

Our main use of this result is \cref{corollary: matricial range of the 2x2 shift}, characterizing the matricial range of $E_{21}$. The use of \cref{theorem: ando} to characterize $\WW(E_{21})$ is a very well-known result, though we are not aware of any published reference.

\begin{corollary}\label{corollary: matricial range of the 2x2 shift}
For any $T\in B(H)$, the following statements are equivalent:
\begin{enumerate}
\item\label{corollary: matricial range of the 2x2 shift:1} $w(T)\leq1/2$;
\item\label{corollary: matricial range of the 2x2 shift:2} there exists $\varphi:M_2(\mathbb C)\to B(H)$, ucp, with $T=\varphi(E_{21})$.
\end{enumerate}
\end{corollary}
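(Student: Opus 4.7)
The plan is to reduce this corollary to a direct combination of the Choi characterization (\cref{proposition: choi characteriztation for the paper}) in the case $n=2$ and the matricial characterization of the numerical radius (\cref{theorem: ando}). The dictionary is as follows: a ucp map $\varphi\colon M_2(\mathbb C)\to B(H)$ is determined by the four values $\varphi(E_{ij})$, and Choi's criterion says that $\varphi$ is completely positive exactly when the block matrix $[\varphi(E_{ij})]\in M_2(B(H))$ is positive. So the existence statement \eqref{corollary: matricial range of the 2x2 shift:2} translates into the existence of a positive operator $A\in B(H)$ (playing the role of $\varphi(E_{11})$) making precisely the $2\times 2$ block matrix appearing in \cref{theorem: ando} positive.

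For \eqref{corollary: matricial range of the 2x2 shift:2}$\implies$\eqref{corollary: matricial range of the 2x2 shift:1}: assume $\varphi\colon M_2(\mathbb C)\to B(H)$ is ucp with $\varphi(E_{21})=T$. Set $A=\varphi(E_{11})$. Then $0\leq A\leq I$ since $0\leq E_{11}\leq I_2$ and $\varphi$ is positive and unital; unitality also gives $\varphi(E_{22})=I-A$, and selfadjointness of $\varphi$ yields $\varphi(E_{12})=T^*$. By \cref{proposition: choi characteriztation for the paper} applied to $\varphi$, the matrix
\[
\begin{bmatrix}\varphi(E_{11})&\varphi(E_{12})\\ \varphi(E_{21})&\varphi(E_{22})\end{bmatrix}=\begin{bmatrix}A&T^*\\ T&I-A\end{bmatrix}\geq 0,
\]
and \cref{theorem: ando} gives $w(T)\leq 1/2$.

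For the converse, assume $w(T)\leq 1/2$. By \cref{theorem: ando} there exists $A\in B(H)^+$ with $A\leq I$ such that $\begin{bmatrix}A&T^*\\ T&I-A\end{bmatrix}\geq 0$. Define $\varphi\colon M_2(\mathbb C)\to B(H)$ on the basis of matrix units by
\[
\varphi(E_{11})=A,\quad \varphi(E_{22})=I-A,\quad \varphi(E_{21})=T,\quad \varphi(E_{12})=T^*,
\]
and extend by linearity. Then $\varphi$ is unital, because $\varphi(I_2)=A+(I-A)=I$, and \cref{proposition: choi characteriztation for the paper} gives complete positivity directly from the positivity of $\begin{bmatrix}A&T^*\\ T&I-A\end{bmatrix}$.

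There is really no obstacle; once one sees the dictionary between $\varphi(E_{11})$ and Ando's parameter $A$, the proof is essentially automatic. The only minor point to mention cleanly is that when defining $\varphi$ on matrix units, the prescribed values are forced to be consistent with the involution of $M_2(\mathbb C)$ (hence the choice $\varphi(E_{12})=T^*$), which guarantees $\varphi$ is a well-defined $*$-preserving linear map before one invokes Choi.
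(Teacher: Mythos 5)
Your proof is correct and follows exactly the paper's own argument: both directions reduce to the dictionary between $\varphi(E_{11})$ and Ando's operator $A$, with Choi's criterion (\cref{proposition: choi characteriztation for the paper}) translating complete positivity into positivity of the block matrix $\begin{bmatrix}A&T^*\\ T&I-A\end{bmatrix}$ and \cref{theorem: ando} supplying the equivalence with $w(T)\leq 1/2$. No changes needed.
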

\begin{proof}
\eqref{corollary: matricial range of the 2x2 shift:1}$\implies$ \eqref{corollary: matricial range of the 2x2 shift:2}. \ By \cref{theorem: ando}, there exists $A\in B(H)^+$ with $A\leq I$ and $\begin{bmatrix}A&T^*\\T&I-A\end{bmatrix}\geq0$. Now define a linear map $\varphi:M_2(\mathbb C)\to B(H)$ by
\[
\varphi(E_{11})=A,\ \varphi(E_{21})=T,\ \varphi(E_{12})=T^*,\ \varphi(E_{22})=I-A.
\]
Then $\varphi$ is unital, and by Choi's criterion (\cref{proposition: choi characteriztation for the paper}) it is completely positive, since
\[
\varphi^{(2)}\left(\begin{bmatrix}E_{11}&E_{12}\\E_{21}&E_{22}\end{bmatrix}\right)=\begin{bmatrix}
A&T^*\\T&I-A\end{bmatrix}\geq0.
\]

\bigskip

\eqref{corollary: matricial range of the 2x2 shift:2}$\implies$ \eqref{corollary: matricial range of the 2x2 shift:1}. \ Let $A=\varphi(E_{11})$. As $\varphi$ is ucp, Choi's criterion  (\cref{proposition: choi characteriztation for the paper}) implies that
\[
\begin{bmatrix}
A&T^*\\T&I-A\end{bmatrix}=\varphi^{(2)}\left(\begin{bmatrix}E_{11}&E_{12}\\E_{21}&E_{22}\end{bmatrix}\right)\geq0,
\]
and so by \cref{theorem: ando}, $w(T)\leq1/2$.
\end{proof}

We remark that the proof \eqref{corollary: matricial range of the 2x2 shift:2}$\implies$ \eqref{corollary: matricial range of the 2x2 shift:1} in \cref{corollary: matricial range of the 2x2 shift} can be achieved  without appealing to Ando's nor Choi's results. Indeed, by using the Stinespring dilation one can show that if $\phi$ is ucp, then $\WW_1(\phi(T))\subset\WW_1(T)$.

\bigskip

We finish this section with another characterization due to Ando. The interesting information we find in its proof, is that the operator $C$ below allows us to express a unitary 2-dilation of $T$ explicitly.

\begin{theorem}[Ando \cite{ando1973}]\label{theorem: ando theorem 2}
Let $T\in B(H)$. Then the following statements are equivalent:
\begin{enumerate}
\item\label{theorem: ando theorem 2:1} $w(T)\leq1$;
\item\label{theorem: ando theorem 2:2} there exists $C\in B(H)$, contractive, such that $T=2(I-C^*C)^{1/2}C$.
\end{enumerate}
\end{theorem}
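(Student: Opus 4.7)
\medskip

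My plan is to prove the easier implication \eqref{theorem: ando theorem 2:2}$\implies$\eqref{theorem: ando theorem 2:1} by a direct computation, and the harder implication \eqref{theorem: ando theorem 2:1}$\implies$\eqref{theorem: ando theorem 2:2} by reducing to the factorization already provided by \cref{theorem: ando's characterization}.

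For \eqref{theorem: ando theorem 2:2}$\implies$\eqref{theorem: ando theorem 2:1}, assume $T=2(I-C^*C)^{1/2}C$ with $\|C\|\leq1$. For any unit vector $\xi\in H$, Cauchy--Schwarz followed by the elementary inequality $2ab\leq a^2+b^2$ yields
\[
|\langle T\xi,\xi\rangle| = 2\,|\langle C\xi,(I-C^*C)^{1/2}\xi\rangle|\leq \|C\xi\|^2+\|(I-C^*C)^{1/2}\xi\|^2=\langle\xi,\xi\rangle=1,
\]
so $w(T)\leq1$.

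For the harder direction \eqref{theorem: ando theorem 2:1}$\implies$\eqref{theorem: ando theorem 2:2}, the plan is to invoke the maximal decomposition from \cref{theorem: ando's characterization}: there exist a selfadjoint contraction $Y=Y_{\max}$ and a contraction $Z=Z_{\max}$ with $T=(I+Y)^{1/2}Z(I-Y)^{1/2}$, and moreover $Z$ is isometric on the range of $I-Y$. Set $X=\tfrac12(I-Y)$, so that $0\leq X\leq I$ and $I-X=\tfrac12(I+Y)$. Rewriting the Ando factorization gives
\[
T=\sqrt2\,(I-X)^{1/2}\,Z\,\sqrt2\,X^{1/2}=2(I-X)^{1/2}\,(ZX^{1/2}).
\]
The natural candidate is therefore $C:=ZX^{1/2}$.

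The one nontrivial step is to verify that $C^*C=X$ so that $(I-C^*C)^{1/2}=(I-X)^{1/2}$ and the factorization closes up. This is precisely where the isometry property of $Z_{\max}$ is used: since $\overline{\operatorname{ran}\,X^{1/2}}=\overline{\operatorname{ran}\,X}=\overline{\operatorname{ran}(I-Y)}$, the operator $Z$ acts isometrically on $\overline{\operatorname{ran}\,X^{1/2}}$, i.e.\ $Z^*Z$ restricts to the identity on this subspace. Consequently $Z^*ZX^{1/2}=X^{1/2}$, and
\[
C^*C=X^{1/2}Z^*ZX^{1/2}=X^{1/2}\cdot X^{1/2}=X.
\]
This gives $T=2(I-C^*C)^{1/2}C$, and the bound $\|C\|\leq1$ follows from $\|C\|^2=\|C^*C\|=\|X\|\leq1$. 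The main obstacle is recognizing that the precise extra information delivered by \cref{theorem: ando's characterization} about the maximal pair $(Y_{\max},Z_{\max})$, namely the isometry of $Z_{\max}$ on $\operatorname{ran}(I-Y_{\max})$, is exactly what is needed to fold the two contractions $Z$ and $(I-Y)^{1/2}$ of the Ando decomposition into the single contraction $C$ in a way that preserves the identity $C^*C=X$; without maximality, the equality $C^*C=X$ would generally fail.
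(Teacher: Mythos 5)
Your proposal is correct. The implication \eqref{theorem: ando theorem 2:1}$\implies$\eqref{theorem: ando theorem 2:2} is essentially the paper's own argument: the paper also takes $C=\tfrac1{\sqrt2}\,Z(I-Y)^{1/2}$ (which is your $ZX^{1/2}$ with $X=\tfrac12(I-Y)$) and uses the isometry of $Z_{\max}$ on the range of $I-Y_{\max}$ to get $(I-Y)^{1/2}Z^*Z(I-Y)^{1/2}=I-Y$, i.e.\ your identity $C^*C=X$; you have correctly identified that this isometry property is the one indispensable ingredient, and your observation that $Z^*Z$ acts as the identity on $\overline{\operatorname{ran}(I-Y)}$ (because $I-Z^*Z\geq0$ has vanishing quadratic form there) is a valid justification. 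Where you genuinely diverge is in \eqref{theorem: ando theorem 2:2}$\implies$\eqref{theorem: ando theorem 2:1}: your one-line Cauchy--Schwarz estimate $|\langle T\xi,\xi\rangle|=2|\langle C\xi,(I-C^*C)^{1/2}\xi\rangle|\leq\|C\xi\|^2+\|(I-C^*C)^{1/2}\xi\|^2=\|\xi\|^2$ is correct and is the same device the paper uses to close the loop in \cref{theorem: ando's characterization}, whereas for this theorem the paper instead constructs an explicit unitary $W$ dilating $C$, forms $U=S^*W^2$, and verifies by induction that $(U^n)_{0,0}=\tfrac12\,T^n$, then invokes \cref{theorem: sz.nagy-foias unitary 2-dilation}. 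Your route is considerably shorter and more elementary; what the paper's longer route buys---and it says so explicitly before the statement---is an explicit formula for a unitary $2$-dilation of $T$ in terms of $C$, which is extra information not recoverable from the direct estimate.
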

\begin{proof}
\eqref{theorem: ando theorem 2:1}$\implies$\eqref{theorem: ando theorem 2:2}
By \cref{theorem: ando's characterization}, there exist $Y,Z\in B(H)$, both contractive and $Y$ selfadjoint, with $Z$ isometric on the range of $(I-Y)^{1/2}$, and satisfying $T=(I+Y)^{1/2}Z(I-Y)^{1/2}$. Let $C=\tfrac1{\sqrt2}\,Z(I-Y)^{1/2}$. The fact that $Z$ is isometric on the range of $(I-Y)^{1/2}$ can be written as $(I-Y)^{1/2}Z^*Z(I-Y)^{1/2}=(I-Y)^{1/2} (I-Y)^{1/2}=I-Y$. Then
\[
I-C^*C=I-\tfrac12\,(I-Y)^{1/2}Z^*Z(I-Y)^{1/2}
=I-\tfrac12\,(I-Y)=\tfrac12\,(I+Y).
\]
Thus
\[
2(I-C^*C)^{1/2}C=(I+Y)^{1/2}Z(I-Y)^{1/2}=T.
\]

\eqref{theorem: ando theorem 2:2}$\implies$\eqref{theorem: ando theorem 2:1}
We will explicitly construct a unitary 2-dilation of $T$, and then the result will follow from \cref{theorem: sz.nagy-foias unitary 2-dilation}. We first construct a unitary dilation $W$ of $C$ on $K=\bigoplus_{k\in\ZZ} H$ as follows:
\begin{align*}
W=\sum_{\substack{k\geq1 \\ k\leq -2}} &I\otimes E_{k+1,k}
+C\otimes E_{0,0}+(I-CC^*)^{1/2}\otimes E_{0,-1} \abajo
&+ (I-C^*C)^{1/2}\otimes E_{1,0}-C^*\otimes E_{1,-1}.
\end{align*}
We encourage the reader to check that this is indeed a unitary; besides a decent amount of patience and care, the only non-trivial (but well-known) manipulation required is to note that $C(I-C^*C)^{1/2}=(I-CC^*)^{1/2}C$.

With $S$ the bilateral shift $S=\sum_{k\in\ZZ} I\otimes E_{k+1,k}$, we define $U=S^*W^2$. This is again a unitary and it has the form
\begin{align*}
U=\sum_{\substack{k\geq2 \\ k\leq -2}} &I\otimes E_{k,k-1}
+(I-C^*C)^{1/2}\otimes E_{1,0}-C^*\otimes E_{1,-1}+C^2\otimes E_{-1,0} \abajo
&+C(I-CC^*)^{1/2}\otimes E_{-1,-1} \abajo
&+(I-CC^*)^{1/2}\otimes E_{-1,-2}+(I-C^*C)^{1/2}C\otimes E_{0,0} \abajo
&+(I-C^*C)^{1/2}(I-CC^*)^{1/2}\otimes E_{0,-1}-C^*\otimes E_{0,-2}.
\end{align*}
We claim that this $U$ is a 2-dilation of $T$; that is, that $(U^n)_{0,0}=\tfrac12\,T^n$ for all $n\in\NN$. We proceed by induction. Assume that, for a fixed $n$ and for all $\ell\geq1$,
\[
(U^n)_{0,0}=\tfrac12 T^n,\ \ \ \ \ (U^n)_{0,-1}=T^{n-1}(I-C^*C)^{1/2}(I-CC^*)^{1/2},
\ \ \ \ \ (U^n)_{0,\ell}=0.
\]
This clearly holds for $n=1$, and so now we show that the above equalities for $n$ imply the corresponding versions for $n+1$. We will use repeatedly the equality $C(I-C^*C)^{1/2}=(I-CC^*)^{1/2}C$. Then (recall that our hypothesis is that $T=2(I-C^*C)^{1/2}C$ and that $U_{\ell,0}\ne0$  and $U_{\ell,-1}\ne0$ only when $\ell\in\{-1,0,1\}$)
\begin{align*}
(U^{n+1})_{0,-1}&=(U^n)_{0,-1}U_{-1,-1}+(U^n)_{0,0}U_{0,-1}+(U^n)_{0,1}U_{1,-1} \abajo
&=T^{n-1}(I-C^*C)^{1/2}(I-CC^*)^{1/2}C(I-CC^*)^{1/2} \abajo
&\ \ \ \ +\tfrac12\,T^n(I-C^*C)^{1/2}(I-CC^*)^{1/2} \abajo
&=T^{n-1}(I-C^*C)^{1/2}C(I-C^*C)^{1/2}(I-CC^*)^{1/2} \abajo
&\ \ \ \ +\tfrac12\,T^n(I-C^*C)^{1/2}(I-CC^*)^{1/2} \abajo
&=\tfrac12\,T^{n}(I-C^*C)^{1/2}(I-CC^*)^{1/2} \abajo
&\ \ \ \ +\tfrac12\,T^n(I-C^*C)^{1/2}(I-CC^*)^{1/2} \abajo
&=T^{n}(I-C^*C)^{1/2}(I-CC^*)^{1/2}.
\end{align*}
Also,
\begin{align*}
(U^{n+1})_{0,0}&=(U^n)_{0,-1}U_{-1,0}+(U^n)_{0,0}U_{0,0} + (U^n)_{0,1} U_{1,0}  \abajo
&= T^{n-1}(I-C^*C)^{1/2}(I-CC^*)^{1/2}C^2 + \tfrac12\,T^n\,\tfrac12\,T \abajo
&=T^{n-1}(I-C^*C)^{1/2}C(I-C^*C)^{1/2}C + \tfrac14\,T^{n+1} \abajo
&=\tfrac14\,T^{n+1}+\tfrac14\,T^{n+1}=\tfrac12\,T^{n+1}.
\end{align*}
And, since for $\ell\geq1$ the only $k$ such that $U_{k,\ell}\ne0$ is $k=\ell+1$ ($U_{\ell+1,\ell}=I$),
\[
(U^{n+1})_{0,\ell}=(U^n)_{0,\ell+1} U_{\ell+1,\ell}=(U^n)_{0,\ell+1}=0.
\]
The induction is then complete: for all $n\in\NN$, we have $(U^n)_{0,0}=\tfrac12\,T^n$.
\end{proof}
\section{Toeplitz Matrices}

The goal in this section is \cref{theorem: block matrix positive toeplitz}. As this is a matricial generalization of the classical \cref{theorem: positive toeplitz}, we present first the scalar version to fix ideas.

\subsection{Scalar Matrices}

The following is \cite[Lemma 2.5]{Paulsen-book}.

\begin{lemma}[Fejer-Riesz]\label{lemma: Fejer-Riesz}
Let $\tau$ be a trigonometric polynomial of the form $\tau(\lambda)=\sum_{-N}^Na_n\lambda^n$. If $\tau(\lambda)>0$ for all $\lambda\in\TT$, then there exists a polynomial $p(z)=\sum_{n=0}^Np_nz^n$ such that
\[
\tau(\lambda)=|p(\lambda)|^2,\ \ \ \lambda\in\TT.
\]
\end{lemma}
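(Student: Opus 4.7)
The plan is to convert $\tau$ into an ordinary polynomial of degree at most $2N$, extract a reciprocal symmetry from the reality of $\tau$ on $\TT$, and then build $p$ by keeping only the roots that lie inside the unit disk.

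First I would set $q(z) = z^N \tau(z) = \sum_{k=0}^{2N} a_{k-N}\,z^k$. Since $\tau$ is real on $\TT$, its coefficients satisfy $a_{-n}=\overline{a_n}$, and a short computation shows $q(z) = z^{2N}\,\overline{q(1/\bar z)}$ for $z\neq 0$. This symmetry forces the roots of $q$ (counted with multiplicity) to pair up: if $\alpha$ is a root, so is $1/\bar\alpha$ with the same multiplicity. The hypothesis $\tau>0$ on $\TT$ prevents roots on $\TT$, so exactly $N$ roots $\alpha_1,\ldots,\alpha_N$ (with multiplicity) lie in $\DD$ and the remaining $N$ roots are their reflections $1/\bar\alpha_j$.

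Second, write $q(z) = c\,\prod_{j=1}^N (z-\alpha_j)(z-1/\bar\alpha_j)$. Using that $\bar\lambda=1/\lambda$ on $\TT$, one sees $\lambda-1/\bar\alpha_j = -\lambda\,\overline{(\lambda-\alpha_j)}/\bar\alpha_j$, and hence on $\TT$
\[
(\lambda-\alpha_j)(\lambda-1/\bar\alpha_j) = -\tfrac{\lambda}{\bar\alpha_j}\,|\lambda-\alpha_j|^2.
\]
Substituting this into $\tau(\lambda) = \lambda^{-N}q(\lambda)$ collapses all $\lambda$-dependent prefactors, leaving
\[
\tau(\lambda)=\rho\,\prod_{j=1}^N |\lambda-\alpha_j|^2, \qquad \rho := (-1)^N c\,\big/\textstyle\prod_j \bar\alpha_j.
\]
Since $\tau(\lambda)>0$ and the product $\prod_j |\lambda-\alpha_j|^2$ is positive on $\TT$, the constant $\rho$ must be a positive real number; then $p(z):=\sqrt{\rho}\,\prod_{j=1}^N (z-\alpha_j)$ is a polynomial of degree at most $N$ with $|p(\lambda)|^2=\tau(\lambda)$ on $\TT$.

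The main obstacle is the algebraic bookkeeping behind the reciprocal symmetry $q(z) = z^{2N}\overline{q(1/\bar z)}$ and the subsequent verification that the constant $\rho$ coming out of the factorization is real and positive; once that is settled, the definition of $p$ is immediate. A small technical point is the case $a_N=0$ (equivalently $a_{-N}=0$): then $q$ has degree strictly less than $2N$, the argument goes through with $N$ replaced by the actual half-degree, and we pad with zero coefficients so that $p$ still has the form $\sum_{n=0}^N p_n z^n$.
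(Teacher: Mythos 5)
Your proposal is correct and follows essentially the same route as the paper's proof: both pass to $z^N\tau(z)$, exploit the coefficient symmetry $a_{-n}=\overline{a_n}$ to get the reciprocal pairing of roots off $\TT$, factor accordingly, and absorb the resulting constant (necessarily positive, since $\tau>0$) into $p$. The only cosmetic differences are that you identify the constant $\rho$ directly from the product formula on $\TT$ whereas the paper writes $\tau(\lambda)=|\tau(\lambda)|=|g(\lambda)|$ and computes moduli, and that you fix the convention of selecting the roots in $\DD$, which the paper leaves unspecified.
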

\begin{proof}
Since $\tau(\lambda)>0$, we have
\begin{align*}
\sum_{n=-N}^Na_n\lambda^n&=\re\left(\sum_{n=-N}^Na_n\lambda^n\right) \abajo
&=\sum_{n=-N}^N\re(a_n\lambda^n)
=\re(a_0)+\sum_{n=1}^N\re(a_n\lambda^n+a_{-n}\lambda^{-n}) \abajo
&=\re(a_0)+\sum_{n=1}^N\frac{(a_n+\overline{a_{-n}})\lambda^{n}+(\overline{a_n}+a_{-n})\lambda^{-n}}2 \abajo
&=\re(a_0)+\sum_{n=-N}^N\frac{a_n+\overline{a_{-n}}}2\,\lambda^n.
\end{align*}
It follows that, for all $n$, $a_n=\frac{a_n+\overline{a_{-n}}}2$, and then $a_n=\overline{a_{-n}}$. Also, $a_0\in\RR$. If necessary, we may decrease $N$ so that $a_{-N}\ne0$. Although we consider $\tau$ as a polynomial on $\TT$, its formula works of course for all $z\in \CC$. Define $g(z)=z^N\tau(z)$. Note that $g(0)=a_{-N}\ne0$, so all roots of $g$ are nonzero. Also, for $\lambda\in\TT$, $g(\lambda)=\lambda^N\tau(\lambda)\ne0$, so no zero of $g$ is in $\TT$. We have
\begin{align*}
g(1/\overline{z})&=\frac1{\overline{z}^N}\tau(1/\overline{z})
=\overline{z^{-N}\sum_{n=-N}^N\overline{a_n}z^{-n}}
=\overline{z^{-N}\sum_{n=-N}^N{a_{-n}}z^{-n}} \abajo
&=\overline{z^{-N}\sum_{n=-N}^N{a_{n}}z^{n}}
=\overline{z^{-2N}g(z)}.
\end{align*}
This implies that $g(z)=0$ if and only if $g(1/\overline{z})=0$. Since no zero is in $\TT$, we get that the zeroes of $g$ are of the form $z_1,\ldots,z_N,1/\overline{z_1},\ldots,1/\overline{z_N}$. Then
\[
g(z)=a_Nr(z)s(z),
\]
where $r,s$ are the polynomials
\[
r(z)=\prod_{n=1}^N(z-z_n),\ \ \ \ \ \ s(z)=\prod_{n=1}^N(z-1/\overline{z_n}).
\]
These two polynomials are related by
\[
\overline{s(z)}=\frac{(-1)^N\overline{z}^N r(1/\overline{z})}{z_1\cdots z_N}.
\]
Then, for $\lambda\in\TT$,
\begin{align*}
\tau(\lambda)
&=|\tau(\lambda)|=|\lambda^{-N}g(\lambda)|=|g(\lambda)|=|a_N|\,|r(\lambda)|\,|s(\lambda)| \abajo
&=|a_N|\,|r(\lambda)|\,\left|\frac{(-1)^N\lambda^{-N}r(\lambda)}{z_1\cdots z_N}\right|
=\left|\frac{a_N}{z_1\cdots z_N}\right|\,|r(\lambda)|^2.
\end{align*}
Thus $\tau(\lambda)=|p(\lambda)|^2$, where $p(z)=\left|\frac{a_N}{z_1\cdots z_N}\right|^{1/2}\,r(z)$.
\end{proof}

Matricial versions of the Fejer-Riesz Lemma exist---see for instance \cite{EphremidzeJanashiaLagvilava2009,HelsonLowdenslager1958,WienerAkutowicz1959}---but we will not discuss them here. Recall from page \pageref{definition: shift} that we denote by $S_n$ the $n\times n$ unilateral shift.

\begin{definition}\label{definition: toeplitz matrix}
An $n\times n$ {Toeplitz matrix}  is a matrix $T$ of the form
\[
T=a_0\,I_n+\sum_{k=1}^{n-1}a_k S_n^k+\sum_{k=1}^{n-1}a_{-k}{S_n^*}^k,
\]
where $a_k\in\CC$ for all $k$.
Graphically, this is
\[
T=\begin{bmatrix}
a_0 & a_{-1} & a_{-2} & \cdots & \cdots & \cdots & \cdots & a_{-n+1}\\
a_1 & a_0 & a_{-1} & a_{-2} & \ddots& \ddots& \ddots& \vdots\\
a_2 & a_1 & a_0 & a_{-1} & \ddots &\ddots &\ddots & \vdots\\
\vdots & a_2 & \ddots & \ddots & \ddots & \ddots &\ddots & \vdots\\
\vdots & \ddots& \ddots & \ddots & \ddots & \ddots & a_{-2} & \vdots\\
\vdots &\ddots &\ddots & \ddots & a_1 & a_0 & a_{-1} & a_{-2}\\
\vdots &\ddots & \ddots&\ddots & a_2 & a_1 & a_0 & a_{-1}\\
a_{n-1} & \cdots & \cdots  & \cdots & \cdots & a_2 & a_1 & a_0\\
\end{bmatrix}
\]
\end{definition}
If in particular $T$ is Hermitian, i.e. $T=T^*$, then
\begin{equation}\label{equation: hermitian toeplitz}
T=a_0\,I_n+2\re \sum_{k=1}^{n-1}a_k S_n^k, \ \ a_0,a_1,\ldots,a_{n-1}\in\CC.
\end{equation}
In the case of a Hermitian Toeplitz matrix we will write, when needed, $a_{-k}=\overline{a_k}$.

The following theorem is based on \cite[Theorem 2.14]{Paulsen-book}; we do not need to make use of this theorem, but we will use a matricial generalization, \cref{theorem: block matrix positive toeplitz} and so the scalar proof might help some readers. Paulsen considers infinite sequences, which we don't need here.

\begin{theorem}\label{theorem: positive toeplitz}
Let $T$ be a Hermitian Toeplitz matrix as in \eqref{equation: hermitian toeplitz}. Then the following statements are equivalent:
\begin{enumerate}
\item\label{theorem: positive toeplitz:1} $T$ is positive;
\item\label{theorem: positive toeplitz:2} there exists a positive linear functional $\phi$ on $C(\TT)$ such that
$a_k=\phi(z^k)$, $k=0,1,\ldots,n-1$.
\end{enumerate}
\end{theorem}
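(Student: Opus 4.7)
The plan is to reduce the problem to a trigonometric moment question on $\TT$. Identify vectors $\xi=(\xi_0,\ldots,\xi_{n-1})^\transpose\in\CC^n$ with polynomials $p_\xi(z)=\sum_{j=0}^{n-1}\overline{\xi_j}z^j$; a direct computation on $\TT$ gives
\[
|p_\xi(\lambda)|^2=\sum_{j,k=0}^{n-1}\xi_j\overline{\xi_k}\,\lambda^{k-j}
\quad\Longrightarrow\quad
\sum_{m=-(n-1)}^{n-1}a_m\,[\lambda^m\text{-coefficient of }|p_\xi|^2]=\langle T\xi,\xi\rangle,
\]
with the convention $a_{-k}=\overline{a_k}$.

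For \eqref{theorem: positive toeplitz:2}$\implies$\eqref{theorem: positive toeplitz:1}, I will simply apply the positive functional $\phi$ to $|p_\xi|^2\geq 0$ on $\TT$; the displayed identity above then yields $\langle T\xi,\xi\rangle=\phi(|p_\xi|^2)\geq 0$ for all $\xi$, giving $T\geq0$.

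For \eqref{theorem: positive toeplitz:1}$\implies$\eqref{theorem: positive toeplitz:2}, let $V\subset C(\TT)$ be the $(2n-1)$-dimensional operator system of trigonometric polynomials of degree at most $n-1$. Define a linear functional $\phi_0:V\to\CC$ by $\phi_0(z^k)=a_k$ for $|k|\leq n-1$ (with $a_{-k}=\overline{a_k}$); it is selfadjoint since $T$ is Hermitian, and $\phi_0(1)=a_0\geq 0$. The key step is to show $\phi_0$ is positive on the cone $V\cap C(\TT)_+$. Given $\tau\in V$ with $\tau>0$ on $\TT$, apply \cref{lemma: Fejer-Riesz} to write $\tau=|p|^2$ for some polynomial $p$ of degree at most $n-1$; the identity above then yields $\phi_0(\tau)=\langle T\xi,\xi\rangle\geq 0$, where $\xi$ is the vector of conjugate coefficients of $p$. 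For arbitrary $\tau\geq 0$ in $V$, apply this to $\tau+\varepsilon\cdot 1\in V$ and let $\varepsilon\to 0^+$, using $\phi_0(\tau+\varepsilon)=\phi_0(\tau)+\varepsilon a_0$.

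To finish, I will extend $\phi_0$ to a positive linear functional on all of $C(\TT)$. Since $V$ contains the order unit $1$, for every real-valued $f\in C(\TT)$ one has $\|f\|_\infty\cdot 1-f\geq 0$, so $V$ is order-majorizing inside the real ordered space $C(\TT,\RR)$. The M.~Riesz extension theorem then produces a positive linear extension of $\phi_0|_{V\cap C(\TT,\RR)}$ to $C(\TT,\RR)$, which complexifies to the desired positive linear functional $\phi:C(\TT)\to\CC$ with $\phi(z^k)=a_k$ for $0\leq k\leq n-1$. The main obstacle in the argument is the positivity of $\phi_0$ on $V$: without Fej\'er--Riesz, there is no obvious way to translate matrix positivity of $T$ into positivity against \emph{every} non-negative trigonometric polynomial of degree $\leq n-1$, since the latter form a strictly larger cone than $\{|p|^2:\deg p\leq n-1\}$ a priori; Fej\'er--Riesz is precisely what collapses these two cones.
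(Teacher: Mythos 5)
Your proof is correct, and in the direction \eqref{theorem: positive toeplitz:1}$\implies$\eqref{theorem: positive toeplitz:2} it takes a genuinely different---and in fact more robust---route than the paper. Both arguments use \cref{lemma: Fejer-Riesz} to convert positivity of $T$ into positivity of a functional against nonnegative trigonometric polynomials, and the converse direction \eqref{theorem: positive toeplitz:2}$\implies$\eqref{theorem: positive toeplitz:1} is the same computation $\langle T\xi,\xi\rangle=\phi(|p_\xi|^2)$ in both (do note explicitly that $\phi(z^{-k})=\overline{\phi(z^k)}=\overline{a_k}$ because positive functionals are selfadjoint; that is what lets the identity close up). The divergence is in how the functional is globalized. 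The paper defines $\phi$ on the operator system of \emph{all} trigonometric polynomials by setting $\phi(z^k)=0$ for $|k|\geq n$, checks positivity there, and extends by density; you define $\phi_0$ only on the $(2n-1)$-dimensional operator system $V$ of degree-$\leq(n-1)$ polynomials and extend by the M.~Riesz theorem, using that $V$ contains the order unit $1$ and hence majorizes $C(\TT,\RR)$. This buys more than economy: the paper's identification of $\sum_{k,j=0}^{m}p_k\overline{p_j}\,a_{k-j}$ with $\langle (T\oplus 0)x,x\rangle$ is only valid when the nonzero coefficients of the Fej\'er--Riesz factor sit in positions $0,\ldots,n-1$, i.e.\ when $\tau\in V$; for $\tau$ of degree $\geq n$ the relevant quadratic form is the zero-padded $(m+1)\times(m+1)$ Toeplitz matrix $[a_{k-j}]$ (with $a_\ell=0$ for $|\ell|\geq n$), which is not $T\oplus 0$ and need not be positive even when $T$ is. For instance, with $n=2$ and $a_0=a_1=1$ one has $T\geq 0$, yet $|1-z+z^2|^2=3-2(z+\bar z)+(z^2+\bar z^2)$ is sent by the zero-extended functional to $3-4+0=-1$. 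By never leaving $V$ you sidestep this issue entirely, at the modest cost of invoking an order-theoretic extension theorem in place of a density argument; since Fej\'er--Riesz applied to a strictly positive $\tau\in V$ produces a factor of degree $\leq n-1$, every step of your positivity check stays inside $\CC^n$ where the hypothesis $T\geq 0$ actually applies.
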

\begin{proof}
\eqref{theorem: positive toeplitz:1}$\implies$\eqref{theorem: positive toeplitz:2} Consider the operator system $\osss=\spann\{z^k:\ k\in\ZZ\}\subset C(\TT)$. Define a linear map $\phi:\osss\to\CC$ by $\phi(z^k)=a_k$,  $\phi(z^{-k})=\overline{a_k}$ for $k=0,\ldots,n-1$, and $\phi(z^k)=0$ for $|k|\geq n$. Let $\tau\in\osss$ be strictly positive, i.e. $\tau(\lambda)>0$ for all $\lambda\in\TT$. By \cref{lemma: Fejer-Riesz}, there exist $p_0,p_1,\ldots,p_m$ such that $\tau(\lambda)=\sum_{k,j=0}^mp_k\overline{p_j}\,\lambda^{k-j}$. Assume, without loss of generality, that $m\geq n$ (we complete the list of $p_k$ with zeroes if it is not the case). Then, with the convention that $a_{-k}=\overline{a_k}$, $a_k=0$ if $|k|\geq n$, and with $x=(p_0,\ldots,p_{m})^\transpose $,
\[
\phi(\tau)=\sum_{k,j=0}^{m}p_k\overline{p_j}\,a_{k-j}=\langle (T\oplus 0)x,x\rangle\geq0
\]
by the positivity of $T$. For arbitrary positive $\tau$, we have that for any $\varepsilon>0$ the function $\tau'(\lambda)=\tau(\lambda)+\varepsilon$ is strictly positive, and so $\phi(\tau)+\varepsilon=\phi(\tau')\geq0$ for all $\varepsilon>0$, which implies that $\phi(\tau)\geq0$. Thus $\phi$ is a positive linear functional on the operator system of the trigonometric polynomials; this implies that it is bounded and we can extend it by density to $C(\TT)$.

\eqref{theorem: positive toeplitz:2}$\implies$\eqref{theorem: positive toeplitz:1} Note that, since $T$ is Hermitian, $a_{-k}=\overline{a_k}=\overline{\phi(z^k)}=\phi(z^{-k})$. Given $x=(p_0,\ldots,p_{n-1})^\transpose \in\CC^n$,
\[
\langle Tx,x\rangle=\sum_{k=0}^{n-1}\sum_{j=0}^{n-1}a_{k-j}p_k\overline{p_j}
=\sum_{k=0}^{n-1}\sum_{j=0}^{n-1}\phi(z^{k-j})p_k\overline{p_j}
=\phi\left(\left|\sum_{k=0}^nz^{k}p_k\right|^2\right)\geq0.
\qedhere
\]
\end{proof}

\begin{remark}
The proof of \eqref{theorem: positive toeplitz:2}$\implies$\eqref{theorem: positive toeplitz:1} in \cref{theorem: positive toeplitz} can also be achieved by using that $\phi$ is completely positive (due to the abelian domain) and then noting that
\begin{align*}
T&=I_n+2\re \sum_{k=1}^{n-1}\phi(z^k) S_n^k\\
&=\phi^{(n)}\left(
\begin{bmatrix}
1&z&z^2&\cdots&z^{n-1}\\
0&0&&\cdots&0\\
\vdots& &&& \vdots\\
0&0&&\cdots&0\\
\end{bmatrix}^*
\begin{bmatrix}
1&z&z^2&\cdots&z^{n-1}\\
0&0&&\cdots&0\\
\vdots& &&& \vdots\\
0&0&&\cdots&0\\
\end{bmatrix}
\right)\geq0.
\end{align*}
\end{remark}

\subsection{Block  Toeplitz Matrices}

An $n\times n$ \emph{block Toeplitz matrix}  is a matrix $T$ of the form
\[
T=A_0\otimes I_n+\sum_{k=1}^{n-1}A_k\otimes S_n^k+\sum_{k=1}^{n-1}A_{-k}\otimes {S_n^*}^k,
\]
where $A_{-(n-1)},\ldots,A_0,\ldots,A_{n-1}\in \cA$ for some C$^*$-algebra $\cA$.
If in particular $T$ is Hermitian, i.e. $T=T^*$, then
\begin{equation}\label{equation: block matrix hermitian toeplitz}
T=A_0\otimes I_n+2\re \sum_{k=1}^{n-1}A_k \otimes S_n^k
\end{equation}
and we may write $A_{-k}=A_k^*$.

\cref{theorem: block matrix positive toeplitz} is the block-matrix version of \cref{theorem: positive toeplitz}. We will later use it in the proof of \cref{theorem: arveson technical result}, with $\cA=M_m(\CC)$.

\begin{theorem}\label{theorem: block matrix positive toeplitz}
Let $T$ be a Hermitian block Toeplitz matrix as in \eqref{equation: block matrix hermitian toeplitz}, with coefficients in the C$^*$-algebra $\cA$. Then the following statements are equivalent:
\begin{enumerate}
\item\label{theorem: block matrix positive toeplitz:1} $T$ is positive;
\item\label{theorem: block matrix positive toeplitz:2} there exists a (completely) positive map $\phi:C(\TT)\to \cA$ such that
\begin{equation}\label{equation: matricial phi}\phi(z^k)=A_k,\ k=0,\ldots,n-1.
\end{equation}
\end{enumerate}
\end{theorem}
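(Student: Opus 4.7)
The plan is to parallel \cref{theorem: positive toeplitz} while bypassing the matricial Fejer--Riesz by constructing $\phi$ directly through a Naimark-type unitary dilation of $T$. Assume throughout that $\cA\subset B(H)$ faithfully.

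For \eqref{theorem: block matrix positive toeplitz:2}$\implies$\eqref{theorem: block matrix positive toeplitz:1}, I would proceed as in the remark following \cref{theorem: positive toeplitz}: since $C(\TT)$ is abelian, any positive $\phi$ is automatically completely positive. Let $V\in M_n(C(\TT))$ be the matrix whose first row is $(1,\bar z,\bar z^2,\ldots,\bar z^{n-1})$ and whose remaining rows vanish. Then $V^*V\geq0$ and a direct computation gives $(V^*V)_{ij}=z^{i-j}$ on $\TT$; applying $\phi^{(n)}$ yields $[A_{i-j}]_{ij}=T$, which is therefore positive.

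For \eqref{theorem: block matrix positive toeplitz:1}$\implies$\eqref{theorem: block matrix positive toeplitz:2}, I would use $T\geq 0$ to endow $H^n$ with the positive semi-definite sesquilinear form
\[
\langle\xi,\eta\rangle_T := \sum_{i,j=0}^{n-1}\langle A_{i-j}\xi_i,\eta_j\rangle_H = \langle T\xi,\eta\rangle_{H^n},
\]
and pass to the separated completion $\tilde K=H^n/N$, where $N$ is the null space. On the image $\tilde K_0$ of $\{\xi\in H^n:\xi_{n-1}=0\}$ in $\tilde K$, define the shift $U_0[\xi_0,\ldots,\xi_{n-2},0]:=[0,\xi_0,\ldots,\xi_{n-2}]$. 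A reindexing of the Toeplitz sum shows $U_0$ preserves $\langle\cdot,\cdot\rangle_T$, and hence is a well-defined isometry $\tilde K_0\to\tilde K$. Extend $U_0$ to a unitary $U$ on a larger Hilbert space $K\supset\tilde K$ via the classical Sz.-Nagy dilation of an isometry. Define $V:H\to K$ by letting $V\xi$ be the class of $[\xi,0,\ldots,0]$. An easy induction shows that for $0\leq k\leq n-1$, $U^kV\xi$ is represented by the vector with $\xi$ in slot $k$; hence $\langle U^kV\xi,V\eta\rangle_K=\langle A_k\xi,\eta\rangle_H$, that is $V^*U^kV=A_k$. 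Finally, set $\phi(f):=V^*f(U)V$, where $f(U)$ is given by the continuous functional calculus for the unitary $U$. Then $\phi:C(\TT)\to B(H)$ is positive with $\phi(z^k)=A_k$, and complete positivity is automatic from the abelian domain.

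The main obstacle is verifying that $U_0$ is a well-defined isometry on the quotient. Both properties follow once one observes that the Toeplitz-form sum over $\{0,\ldots,n-1\}^2$ reindexes to the sum over $\{0,\ldots,n-2\}^2$ under the shift, which coincides with the original sum restricted to $\xi_{n-1}=\eta_{n-1}=0$. Extending $U_0$ to a unitary is a standard Sz.-Nagy move. A lingering subtlety is that $\phi$ lands a priori in $B(H)$ rather than $\cA$; this is a non-issue in the intended application $\cA=M_m(\CC)$ (cf.\ the use of this theorem in \cref{theorem: arveson technical result}), where $B(H)=\cA$.
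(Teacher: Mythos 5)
Your argument for \eqref{theorem: block matrix positive toeplitz:2}$\implies$\eqref{theorem: block matrix positive toeplitz:1} is essentially the paper's (the paper runs it through a Stinespring dilation; you apply $\phi^{(n)}$ to $V^*V$ as in the remark after \cref{theorem: positive toeplitz} --- these are interchangeable). The genuine divergence is in \eqref{theorem: block matrix positive toeplitz:1}$\implies$\eqref{theorem: block matrix positive toeplitz:2}. The paper avoids dilations entirely: it defines $\phi(z^k)=A_k$ for $|k|\leq n-1$ and $0$ otherwise on the operator system of trigonometric polynomials, uses the scalar Fej\'er--Riesz factorization (\cref{lemma: Fejer-Riesz}) to verify positivity on strictly positive $\tau$, and extends by density. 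You instead solve the truncated operator-valued trigonometric moment problem by a Naimark/Sz.-Nagy construction: the semi-inner product $\langle\cdot,\cdot\rangle_T$ on $H^n$, the shift isometry $U_0$ on the subspace with vanishing last slot, a unitary extension $U$, and $\phi(f)=V^*f(U)V$. This is correct: the Toeplitz reindexing does make $U_0$ a well-defined isometry on the separated completion; extending an isometry defined on a subspace to a unitary on a larger space is standard (it is the same device as in \cref{theorem: positive-definite functions are compressions of unitary representations}); and $V^*U^kV=A_k$ for $0\leq k\leq n-1$ holds because each intermediate vector $U^jV\xi$, $j\leq n-2$, still has vanishing last slot, so $U$ acts on it as $U_0$. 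Your route buys more than is asked (a genuine unitary power dilation of the moment data), at the cost of the unitary-extension step; the paper's route needs only the scalar Fej\'er--Riesz lemma and stays entirely inside $\cA$.

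That last point is the one place your proposal falls short of the theorem \emph{as stated}: $\phi(f)=V^*f(U)V$ takes values in $B(H)$, and for $|k|\geq n$ there is no reason for $\phi(z^k)$ to lie in $\cA$ when $\cA\subsetneq B(H)$. The paper's construction does not have this defect, since its $\phi$ has range in the closed span of $\{A_k\}$, hence in $\cA$. You flag this honestly, and it is irrelevant for the only use made of the theorem (\cref{theorem: arveson technical result}, where $\cA=M_m(\CC)=B(\CC^m)$), but to recover the statement for a general C$^*$-algebra $\cA$ you would still need the paper's density argument, or some other means of pulling the range back into $\cA$.
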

\begin{proof}
\eqref{theorem: block matrix positive toeplitz:1}$\implies$\eqref{theorem: block matrix positive toeplitz:2} Consider the operator system $\osss=\spann\{z^k:\ k\in\ZZ\}\subset C(\TT)$. Define a linear map $\phi:\osss\to\cA$ by $\phi(z^k)=A_k$,  $\phi(z^{-k})={A_k^*}$ for $k=0,\ldots,n-1$, and $\phi(z^k)=0$ for $|k|\geq n$. Let $\tau\in\osss$ be strictly positive, i.e. $\tau(\lambda)>0$ for all $\lambda\in\TT$. By \cref{lemma: Fejer-Riesz}, there exist $p_0,p_1,\ldots,p_m$ such that $\tau(\lambda)=\sum_{k,j=0}^mp_k\overline{p_j}\,\lambda^{k-j}$. Assume, without loss of generality, that $m\geq n$ (we complete the list of $p_k$ with zeroes if it is not the case). Then, with the convention that $A_{-k}={A_k^*}$, $A_k=0$ if $|k|\geq n$, 
\[
\phi(\tau)=\sum_{k,j=0}^{m}p_k\overline{p_j}\,A_{k-j}
=\begin{bmatrix}p_0\,I\\ \vdots \\ p_{m}\,I\end{bmatrix}^*\,\begin{bmatrix}T\\&0_{m-n}\end{bmatrix}\,\begin{bmatrix}p_0\,I\\ \vdots \\ p_{m}\,I\end{bmatrix}\geq0
\]
by the positivity of $T$. For arbitrary positive $\tau$, we have
that for any $\varepsilon>0$ the function
$\tau'(\lambda)=\tau(\lambda)+\varepsilon$ is strictly positive,
and so $\phi(\tau)+\varepsilon=\phi(\tau')\geq0$ for all
$\varepsilon>0$, which implies that $\phi(\tau)\geq0$. Thus $\phi$
is a   positive linear map in the operator system of the trigonometric polynomials; thus it is bounded, and it   extends by density to a positive map
on $C(\TT)$ with range still contained in $\cA$.

\eqref{theorem: block matrix positive toeplitz:2}$\implies$\eqref{theorem: block matrix positive toeplitz:1} Since $C(\TT)$ is abelian, $\phi$ is completely positive. Consider a Stinespring dilation of $\phi$, i.e. $\phi(f)=V^*\pi(f)V$. Then,
for each vector $x=(\xi_0,\ldots,\xi_{n-1})^\transpose \in H^n$,
\begin{align*}
\langle Tx,x\rangle&=\sum_{k=0}^{n-1}\sum_{j=0}^{n-1}\langle A_{k-j}\xi_k,\xi_j\rangle
=\sum_{k=0}^{n-1}\sum_{j=0}^{n-1}\langle V^*\pi(z^{k-j})V \xi_k,\xi_j\rangle \abajo
&=\sum_{k=0}^{n-1}\sum_{j=0}^{n-1}\langle\pi(z^j)^*\pi(z^{k})V \xi_k,V\xi_j\rangle \abajo
&=\left\|\sum_{k=0}^{n-1} \pi(z^{k})V\xi_k\right\|^2\geq0.\qedhere
\end{align*}
\end{proof}

\section{Nilpotent Dilations and Matricial Range}\label{section: nilpotent dilations}

The following is a significant technical result by Arveson, characterizing those contractions that can be power-dilated to nilpotents. The proof does not follow the original; in particular, the argument that we offer for   \eqref{theorem: arveson technical result:3}$\implies$\eqref{theorem: arveson technical result:0} is more algebraic and direct that Arveson's original.

\begin{theorem}\cite[Theorem 1.3.1]{arveson1972}\label{theorem: arveson technical result}
Let $T\in B(H)$ with $\|T\|\leq1$ and let $n\in\NN$ with $n\geq2$. Then the following statements are equivalent:
\begin{enumerate}
\item\label{theorem: arveson technical result:0} There exists $\phi:M_n(\CC)\to B(H)$, ucp, with
$\phi(S_n^j)=T^j$, $j=1,\ldots,n-1$.
\item\label{theorem: arveson technical result:2} There exists a Hilbert space $K\supseteq H$ and $N\in B(K)$ such that $N$ is unitarily equivalent to $\bigoplus_j S_n$,  and $T^j=P_HN^j|_H$, $j=0,1,\ldots,n-1$.
\item\label{theorem: arveson technical result:1} There exists a Hilbert space $K\supseteq H$ and $N\in B(K)$ such that $\|N\|\leq1$, $N^n=0$, and $T^j=P_HN^j|_H$, $j=0,1,\ldots,n-1$.

\item\label{theorem: arveson technical result:3} $I+2\re\sum_{k=1}^{n-1}\lambda^kT^k\geq0$ for all $\lambda\in\TT$.
\end{enumerate}
\end{theorem}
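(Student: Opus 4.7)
My plan is to close the cycle \eqref{theorem: arveson technical result:0}$\iff$\eqref{theorem: arveson technical result:2}$\implies$\eqref{theorem: arveson technical result:1}$\implies$\eqref{theorem: arveson technical result:3}$\implies$\eqref{theorem: arveson technical result:0}. The equivalence \eqref{theorem: arveson technical result:0}$\iff$\eqref{theorem: arveson technical result:2} is essentially Stinespring: a ucp $\phi:M_n(\CC)\to B(H)$ has a Stinespring dilation $\phi=V^*\pi V$ with $\pi$ a $*$-representation, and since every $*$-representation of $M_n(\CC)$ is unitarily equivalent to a multiple of the identity representation, $\pi(S_n)\cong\bigoplus S_n$ for an appropriate multiplicity; identifying $H$ with $VH$ gives \eqref{theorem: arveson technical result:2}. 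Conversely, given $N\cong\bigoplus S_n$ on $K\supset H$, the compression $X\longmapsto P_H\pi(X)|_H$ of the corresponding $*$-representation $\pi:M_n(\CC)\to B(K)$ is ucp and sends $S_n^k$ to $T^k$. The implication \eqref{theorem: arveson technical result:2}$\implies$\eqref{theorem: arveson technical result:1} is immediate since $\bigoplus S_n$ is a nilpotent contraction of order $n$.

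For \eqref{theorem: arveson technical result:1}$\implies$\eqref{theorem: arveson technical result:3}, nilpotency of $N$ makes $I-\lambda N$ invertible with Neumann-series inverse $\sum_{k=0}^{n-1}\lambda^k N^k$, so
\[
M(\lambda):=I+2\re\sum_{k=1}^{n-1}\lambda^k N^k=(I-\lambda N)^{-1}+(I-\bar\lambda N^*)^{-1}-I.
\]
With the substitution $y=(I-\lambda N)^{-1}x$, a short computation yields $\langle M(\lambda)x,x\rangle=\|y\|^2-\|Ny\|^2\geq 0$ since $\|N\|\leq 1$. Compressing to $H$ and using $T^k=P_HN^k|_H$ gives $I+2\re\sum_{k=1}^{n-1}\lambda^k T^k\geq 0$, which is \eqref{theorem: arveson technical result:3}.

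The main step is \eqref{theorem: arveson technical result:3}$\implies$\eqref{theorem: arveson technical result:0}. Consider the $(2n-1)$-dimensional operator system $\osss_n=\spann\{I,S_n,\ldots,S_n^{n-1},S_n^*,\ldots,(S_n^*)^{n-1}\}\subset M_n(\CC)$ and define $\phi_0:\osss_n\to B(H)$ by $\phi_0(S_n^k)=T^k$, $\phi_0((S_n^*)^k)=(T^*)^k$. By Arveson's Extension Theorem it suffices to show $\phi_0$ is completely positive. Any Hermitian element of $M_m(\osss_n)$ has the form $Y=A_0\otimes I_n+\sum_{k=1}^{n-1}(A_k\otimes S_n^k+A_k^*\otimes (S_n^*)^k)$ with $A_0=A_0^*\in M_m$, and reordering tensor factors identifies $Y$ (as an element of $M_m\otimes M_n=M_{mn}$) with the block Toeplitz matrix $[A_{i-j}]_{i,j=1}^n$ having entries in $M_m$. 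By \cref{theorem: block matrix positive toeplitz} applied with $\cA=M_m$, positivity of $Y$ is equivalent to the existence of a (completely) positive $\psi:C(\TT)\to M_m$ with $\psi(z^k)=A_k$ for $0\leq k\leq n-1$. Now condition \eqref{theorem: arveson technical result:3} says precisely that $p(\lambda):=I+2\re\sum_{k=1}^{n-1}\lambda^k T^k$ is positive in $C(\TT,B(H))\simeq C(\TT)\otimes B(H)$, and one checks that $\phi_0^{(m)}(Y)=(\psi\otimes \id_{B(H)})(p)$; since $\psi\otimes\id_{B(H)}$ is cp, we conclude $\phi_0^{(m)}(Y)\geq 0$, and hence $\phi_0$ is cp.

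The principal obstacle is this last paragraph: one must first make the (routine but essential) identification of $M_m(\osss_n)$ with the operator system of $n\times n$ block Toeplitz matrices with coefficients in $M_m$, which is what makes \cref{theorem: block matrix positive toeplitz} applicable at the $m^{\rm th}$ level; and then one must perform the factorization $\phi_0^{(m)}(Y)=(\psi\otimes\id)(p)$, which is exactly the mechanism that converts the (pointwise, scalar-level) positivity of the symbol $p(\lambda)$ asserted in \eqref{theorem: arveson technical result:3} into the matricial positivity of $\phi_0^{(m)}(Y)$ required for complete positivity.
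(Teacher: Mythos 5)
Your proposal is correct and takes essentially the same route as the paper: Stinespring for \eqref{theorem: arveson technical result:0}$\iff$\eqref{theorem: arveson technical result:2}, the resolvent identity for \eqref{theorem: arveson technical result:1}$\implies$\eqref{theorem: arveson technical result:3}, and the block Toeplitz positivity theorem (\cref{theorem: block matrix positive toeplitz}) plus Arveson's Extension Theorem, via the factorization $\phi_0^{(m)}(Y)=(\psi\otimes\id)(p)$ with $p$ pointwise positive, for \eqref{theorem: arveson technical result:3}$\implies$\eqref{theorem: arveson technical result:0}. The only (harmless) deviations are that you invert $I-\lambda N$ directly on $\TT$ using nilpotency instead of working with $|z|<1$ and letting $r\nearrow1$, and you verify positivity via the substitution $y=(I-\lambda N)^{-1}x$ rather than the paper's operator factorization $2(I-\bar zN^*)^{-1}(I-|z|^2N^*N)(I-zN)^{-1}$.
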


\begin{proof}
\eqref{theorem: arveson technical result:0}$\implies$\eqref{theorem: arveson technical result:2}
This is a straightforward consequence of Stinespring's Dilation Theorem. Indeed, after writing $\phi(X)=P_H\pi(X) |_H$---under the usual identification of $H$ with its range under $V$---we can take $N=\pi(S_n)$ (recall that any representation of $M_n(\CC)$ is of the form $X\longmapsto X\otimes I$).

\eqref{theorem: arveson technical result:2}$\implies$\eqref{theorem: arveson technical result:1} Trivial.

\eqref{theorem: arveson technical result:1}$\implies$\eqref{theorem: arveson technical result:3}
We have that $T^j=P_H\,N^j|_H$ for $j=0,1,\ldots,n-1$. As compressions are (completely) positive, it is enough to prove the inequality $I+2\re\,\sum_{k=1}^{n-1}\lambda^kN^k\geq0$ for all $\lambda\in\TT$. Now we can take advantage of the fact that $N^n=0$. Fix $z\in\mathbb C$ with $|z|<1$. We have
\begin{align*}
I+2\re\,\sum_{k=1}^{n-1}z^kN^k&=I+2\re\,\sum_{k=1}^\infty z^kN^k=\re \left(I+ 2zN(I-zN)^{-1}\right)\\
&=\re\left([(I-zN) +2zN](I-zN)^{-1}\right)\\
&=\re (I+zN)(I-zN)^{-1}\\
&=\frac12\,\left( (I+zN)(I-zN)^{-1}+(I-\overline z N^*)^{-1}(I+\overline z N^*)\right)\\
&= 2(I-\overline z N^*)^{-1} (I-|z|^2 N^*N)(I-zN)^{-1}\geq0.\\
\end{align*}
Now given $\lambda\in \TT$, we have from above that $I+2\re\sum_{k=1}^{n-1}r^k\lambda^kT^k\geq0$ for all $r\in [0,1)$. The positivity is preserved as $r\nearrow1$.

\eqref{theorem: arveson technical result:3}$\implies$\eqref{theorem: arveson technical result:0} Define a linear map $\psi:\oss{I,S_n,\ldots,S_n^{n-1}}\to B(H)$ by
$\psi(S_n^j)=T^j$. We will show that $\psi$ is ucp. If we take $A_0,\ldots,A_{n-1}\in M_m(\CC)$ such that
$A_0\otimes I_n+2\re \sum_{k=1}^{n-1}A_k \otimes S_n^k\geq0$, then by \cref{theorem: block matrix positive toeplitz} there exists a completely positive map $\phi:C(\TT)\to M_m(\CC)$ with $\phi(z^k)=A_k$ for $k=0,\ldots,n-1$.
We have
\begin{align*}
\psi^{(m)}\left(A_0\otimes I_n+2\re \sum_{k=1}^{n-1}A_k \otimes S_n^k\right)
&=A_0\otimes \psi(I_n)+2\re \sum_{k=1}^{n-1}A_k \otimes \psi(S_n^k)\\
&=\phi(1)\otimes I+2\re \sum_{k=1}^{n-1}\phi(z^k) \otimes T^k\\
&=(\phi\otimes\id)\!\left(1\otimes I+2\re\sum_{k=1}^{n-1} z^k\otimes T^k\right).
\end{align*}
The expression inside the brackets is a function $C(\TT)\to M_n(\CC)$. For each $\lambda\in\TT$ it evaluates to
\[
1\otimes I+2\re\sum_{k=1}^{n-1} \lambda^k\otimes T^k
=1\otimes \left(I+2\re\sum_{k=1}^{n-1} \lambda^kT^k\right)\geq0.
\]
As $\phi$ is ucp,  $\phi\otimes\id$ is positive (this can be seen quickly by using Stinespring) and so the expression $(\phi\otimes\id)\,(\cdot)$ above is positive; thus $\psi^{(m)}$ is positive for any $m$, i.e., completely positive.

Now we can extend it via Arveson's Extension  Theorem to $M_n(\CC)$.
\end{proof}

In the case $n=2$, \cref{theorem: arveson technical result} characterizes the matricial range of $S_2=E_{21}=\begin{bmatrix}0&0\\1&0\end{bmatrix}$. Indeed:
\begin{corollary}\label{corollary: matricial range of E_21}
The extended matricial range of the $2\times2$ unilateral shift consists of the set of all operators $T\in B(H)$ (for any dimension of $H$) such that $w(T)\leq1/2$.
\end{corollary}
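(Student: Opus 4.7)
The strategy is simply to combine \cref{theorem: arveson technical result} (specialized to $n=2$) with the characterization of the numerical radius in \cref{proposition: characterization of the numerical range}.

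First I would unpack the left-hand side. By Arveson's Extension Theorem, $\WW_\infty(E_{21})$ coincides with $\{\phi(E_{21}):\phi\colon M_2(\CC)\to B(H)\text{ is ucp}\}$, since any ucp map on $\oss{E_{21}}$ extends to $M_2(\CC)$. For $n=2$ the list of equalities $\phi(S_n^j)=T^j$, $1\leq j\leq n-1$, appearing in condition \eqref{theorem: arveson technical result:0} of \cref{theorem: arveson technical result} collapses to the single equation $\phi(E_{21})=T$. So membership $T\in\WW_\infty(E_{21})$ is precisely condition \eqref{theorem: arveson technical result:0} with $n=2$.

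Then I would invoke the equivalence \eqref{theorem: arveson technical result:0}$\iff$\eqref{theorem: arveson technical result:3} of \cref{theorem: arveson technical result} to rewrite this membership as the operator inequality $I+2\re(\lambda T)\geq 0$ for every $\lambda\in\TT$. The norm hypothesis $\|T\|\leq 1$ appearing in that theorem is harmless on both sides: if $w(T)\leq 1/2$ then $\|T\|\leq 2w(T)\leq 1$, and if $T=\phi(E_{21})$ for some ucp $\phi$ then $\|T\|\leq\|E_{21}\|=1$.

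Finally I would observe that $I+2\re(\lambda T)\geq 0$ for all $\lambda\in\TT$ is equivalent to $w(T)\leq 1/2$. This is just the equivalence \eqref{proposition: characterization of the numerical range:1}$\iff$\eqref{proposition: characterization of the numerical range:2} of \cref{proposition: characterization of the numerical range} applied to $2T$; inspecting the short computation given there shows that no norm bound on the operator is actually used, so the equivalence is available without worry. There is no real obstacle in this corollary: all of the substantive work has already been carried out in \cref{theorem: arveson technical result}, and the present statement is purely a translation of that characterization into the language of the numerical radius.
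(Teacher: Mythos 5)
Your proposal is correct and follows exactly the paper's own route: specialize \cref{theorem: arveson technical result} to $n=2$ (noting $w(T)\leq 1/2\implies\|T\|\leq 1$ so the theorem applies) and then translate condition \eqref{theorem: arveson technical result:3} into $w(T)\leq 1/2$ via \cref{proposition: characterization of the numerical range}. Your additional remarks---that the Arveson Extension Theorem identifies $\WW_\infty(E_{21})$ with the images of ucp maps on $M_2(\CC)$, and that the norm hypothesis in \cref{proposition: characterization of the numerical range} is not actually used in the relevant equivalence---only make explicit what the paper leaves implicit.
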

\begin{proof}
Note that $w(T)\leq1/2$ implies that $\|T\|\leq1$, since $\|T\|\leq 2w(T)$. Now combine the case $n=2$ in \cref{theorem: arveson technical result} with \cref{proposition: characterization of the numerical range}.
\end{proof}

We remark that for $n>2$, \cref{theorem: arveson technical result} does \textbf{not} characterize the matricial range of $S_n$, as one is considering the additional requirement that higher powers of $S_n$ are mapped to the corresponding powers of $T$. Forty five years after the above results, the matricial range of $S_n$, $n\geq3$, has not been characterized. As far as we can tell, there is not even a conjecture of what it  could be.

\begin{remark}
Fix $n\in\mathbb N$. The map $\gamma:\text{span}\,\{I,S_n,S_n^2,\ldots,S_n^{n-1}\}\to C(\TT )$ given by $\gamma(S_n^j)=z^j$, where $z$ is the identity function, is never completely contractive. Indeed, every power of the shift is mapped to a unitary. If $\gamma$ were ucc, we would be able to extend it to a ucp map $M_n(\mathbb C)\to X$ where $X$ is the injective envelope of $C(\TT )$. Then
\[
\gamma(E_{11})=\gamma({S_n^*}^{n-1}S_n^{n-1})\geq \gamma(S_n^{n-1})^*\gamma(S_n^{n-1})=1=\gamma(I).
\]
Positivity then implies that $\gamma(E_{22})=\cdots=\gamma(E_{nn})=0$. So, for any $k=2,\ldots,n$,
\[
0\leq\gamma(E_{k1})\gamma(E_{1k})\leq\gamma(E_{k1}E_{1k})=\gamma(E_{kk})=0,
\]
implying that $\gamma(E_{k1})=0$ for $k\geq2$. Then
\[
\gamma(S_n)=\gamma\left(\sum_{k=1}^{n-1}E_{k+1,k}\right)=0,
\]
a contradiction.
\end{remark}

\section{Characterizations of the Numerical Radius}

The following theorem requires no proof, as it just collects equivalences we proved in previous sections.
It is a consequence of very subtle ideas. 

\begin{theorem}\label{theorem: matricial range of E_12 all conditions}
Let $T\in B(H)$. The following statements are equivalent:
\begin{enumerate}
\item $w(T)\leq1$;
\item $\re(\lambda T)\leq I$ for all $\lambda\in\TT$;
\item there exists a Hilbert space $K\supset H$, and a unitary $U\in B(K)$ such that $T^n=2P_HU^n|_H$ for all $n\in\NN$;
\item there exists a Hilbert space $K\supset H$, and $N\in B(K)$, unitarily equivalent to $\bigoplus_j E_{21}$, such that $T=2P_H N|_H$;
\item there exists a Hilbert space $K\supset H$, and $N\in B(K)$ with $\|N\|\leq1$ and $N^2=0$, such that $T=2P_H N|_H$;
\item there exist contractions $Y,Z\in B(H)$, with $Y=Y^*$, such that $T=(I-Y)^{1/2}Z(I+Y)^{1/2}$;
\item there exists $A\in B(H)$, with $0\leq A\leq I$, such that $\begin{bmatrix} A&2T\\ 2T^*&I-A\end{bmatrix}\geq0$;
\item there exists $A\in B(H)$, with $0\leq A\leq I$, with $T=2(I-A^*A)^{1/2}A$;
\item there exists $\varphi:M_2(\CC)\to B(H)$, ucp, with $\varphi(E_{21})=\tfrac12\,T $.
\end{enumerate}
\end{theorem}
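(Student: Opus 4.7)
The plan is to observe that \cref{theorem: matricial range of E_12 all conditions} collects exactly the equivalences already established piecemeal in the preceding sections, so its proof is simply a compilation of citations. Each equivalence between condition (1) and one of (2)--(9) is already available; the only bookkeeping required is to track the normalization, since several earlier theorems are phrased for $w(\cdot)\le 1/2$ and must be invoked with $T$ replaced by $T/2$.

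For the conditions naturally stated in terms of $w(T)\le 1$: (2) is the equivalence (1)$\iff$(3) of \cref{proposition: characterization of the numerical range}; (3) is precisely \cref{theorem: sz.nagy-foias unitary 2-dilation}; (6) is \cref{theorem: ando's characterization} after the harmless substitution $Y\mapsto -Y$, which interchanges the factors $(I\pm Y)^{1/2}$ and preserves the class of selfadjoint contractions; and (8) is \cref{theorem: ando theorem 2}.

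For the remaining items one first notes that $w(T)\le 1$ forces $\|T\|\le 2w(T)\le 2$, so $T/2$ is a contraction and the hypotheses of the previously proven results apply. Then (9) follows from \cref{corollary: matricial range of the 2x2 shift} applied to $T/2$, yielding the equivalence of $w(T)\le 1$ with the existence of a ucp map $\varphi\colon M_2(\CC)\to B(H)$ sending $E_{21}$ to $T/2$. Items (4) and (5) come from \cref{theorem: arveson technical result} with $n=2$ and $T/2$ in place of $T$: the equivalences (1)$\iff$(2) and (1)$\iff$(3) there produce a dilation $N$ --- either nilpotent of index $2$, or unitarily equivalent to $\bigoplus_j S_2$ --- with $T/2 = P_H N|_H$, i.e.\ $T = 2P_H N|_H$. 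Finally, (7) is \cref{theorem: ando} applied to $T/2$, after rescaling the resulting block positivity so that the off-diagonal entries read $2T$ and $2T^*$ while the diagonal blocks still sum to the identity.

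The proof carries no serious obstacle, since every implication has been established in full elsewhere. The only delicate point is keeping the factors of $2$ consistent across the chain of equivalences and making sure that the placement of $T$ versus $T^*$ in the $2\times 2$ blocks, and of $Y$ versus $-Y$ in the Ando decomposition, is aligned correctly with the statements of the cited earlier results.
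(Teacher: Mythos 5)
Your overall strategy is exactly the paper's: the theorem is proved by combining \cref{proposition: characterization of the numerical range}, \cref{theorem: sz.nagy-foias unitary 2-dilation}, \cref{theorem: ando's characterization}, \cref{theorem: ando}, \cref{theorem: ando theorem 2}, \cref{theorem: arveson technical result} (with $n=2$) and \cref{corollary: matricial range of the 2x2 shift}, and your bookkeeping for items (2)--(6) and (9) --- the passage to $T/2$, the bound $\|T\|\le 2w(T)\le 2$ needed to invoke \cref{theorem: arveson technical result}, and the substitution $Y\mapsto -Y$ to reverse the order of the factors in Ando's decomposition --- is correct and makes explicit what the paper's one-line proof leaves implicit.

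There is, however, a genuine gap in your treatment of item (7). Applying \cref{theorem: ando} to $T/2$ yields a positive block matrix whose off-diagonal entries are $T^*/2$ and $T/2$, and there is no ``rescaling'' that converts these into $2T$ and $2T^*$ while keeping the two diagonal blocks summing to the identity: conjugating by $\mathrm{diag}(\lambda I,\mu I)$ preserves the constraint $A+(I-A)=I$ only when $|\lambda|=|\mu|=1$, in which case the off-diagonal entries change only by a unimodular scalar. In fact the condition $\left[\begin{smallmatrix} A&2T\\ 2T^*&I-A\end{smallmatrix}\right]\ge0$ with $0\le A\le I$ is, by \cref{theorem: ando}, equivalent to $w(2T)\le 1/2$, that is to $w(T)\le 1/4$ (test the scalar case $T=t>0$: positivity forces $a(1-a)\ge 4t^2$, hence $t\le 1/4$). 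The version actually equivalent to $w(T)\le 1$ has $T/2$ and $T^*/2$ in the corners; you must either prove that version or record the discrepancy, rather than assert a manipulation that cannot be carried out. A similar, smaller problem occurs in item (8): \cref{theorem: ando theorem 2} produces a \emph{contraction} $C$, not a positive one, and the requirement $0\le A\le I$ cannot be met in general --- if $A\ge0$ then $(I-A^*A)^{1/2}=(I-A^2)^{1/2}$ commutes with $A$, so $2(I-A^2)^{1/2}A$ is selfadjoint and positive, which already fails for $T=2E_{21}$ even though $w(2E_{21})=1$. The citation is the right one, but the hypothesis on $A$ has to be read as $\|A\|\le 1$; your proof should say so instead of passing the positivity requirement through in silence.
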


\begin{proof}
Combine \cref{proposition: characterization of the numerical range} with \cref{theorem: sz.nagy-foias unitary 2-dilation}, \cref{theorem: ando}, \cref{theorem: ando theorem 2},
and \cref{theorem: arveson technical result}.
\end{proof}

\section*{Acknowledgements}

This work has been supported in part by the Discovery Grant program of the Natural Sciences and Engineering Research Council of Canada. I would also like to thank D. Farenick for his encouragement and for several useful suggestions.

\bibliographystyle{abbrv} 

\begin{thebibliography}{10}

\bibitem{ando1973}
T.~Ando.
\newblock Structure of operators with numerical radius one.
\newblock {\em Acta Sci. Math. (Szeged)}, 34:11--15, 1973.

\bibitem{arveson1969}
W.~Arveson.
\newblock Subalgebras of {$C\sp{\ast} $}-algebras.
\newblock {\em Acta Math.}, 123:141--224, 1969.

\bibitem{arveson1972}
W.~Arveson.
\newblock Subalgebras of {$C\sp{\ast} $}-algebras. {II}.
\newblock {\em Acta Math.}, 128(3-4):271--308, 1972.

\bibitem{arveson1977}
W.~Arveson.
\newblock Notes on extensions of {$C^{\sp*}$}-algebras.
\newblock {\em Duke Math. J.}, 44(2):329--355, 1977.

\bibitem{Blecher--LeMerdy-book}
D.~P. Blecher and C.~Le~Merdy.
\newblock {\em Operator algebras and their modules---an operator space
  approach}, volume~30 of {\em London Mathematical Society Monographs. New
  Series}.
\newblock The Clarendon Press Oxford University Press, Oxford, 2004.
\newblock Oxford Science Publications.

\bibitem{Brown--Ozawa-book}
N.~P. Brown and N.~Ozawa.
\newblock {\em {$C^*$}-algebras and finite-dimensional approximations},
  volume~88 of {\em Graduate Studies in Mathematics}.
\newblock American Mathematical Society, Providence, RI, 2008.

\bibitem{bunce--salinas1976}
J.~Bunce and N.~Salinas.
\newblock Completely positive maps on {$C\sp*$}-algebras and the left matricial
  spectra of an operator.
\newblock {\em Duke Math. J.}, 43(4):747--774, 1976.

\bibitem{Davidson-book}
K.~R. Davidson.
\newblock {\em {$C\sp *$}-algebras by example}, volume~6 of {\em Fields
  Institute Monographs}.
\newblock American Mathematical Society, Providence, RI, 1996.

\bibitem{davidson--kennedy2013}
K.~R. Davidson and M.~Kennedy.
\newblock The {C}hoquet boundary of an operator system.
\newblock {\em Duke Math. J.}, 164(15):2989--3004, 2015.

\bibitem{EphremidzeJanashiaLagvilava2009}
L.~Ephremidze, G.~Janashia, and E.~Lagvilava.
\newblock A simple proof of the matrix-valued {F}ej\'er-{R}iesz theorem.
\newblock {\em J. Fourier Anal. Appl.}, 15(1):124--127, 2009.

\bibitem{farenick2011b}
D.~Farenick.
\newblock Arveson's criterion for unitary similarity.
\newblock {\em Linear Algebra Appl.}, 435(4):769--777, 2011.

\bibitem{farenick1993b}
D.~R. Farenick.
\newblock Matricial extensions of the numerical range: a brief survey.
\newblock {\em Linear and Multilinear Algebra}, 34(3-4):197--211, 1993.

\bibitem{hamana1979b}
M.~Hamana.
\newblock Injective envelopes of operator systems.
\newblock {\em Publ. Res. Inst. Math. Sci.}, 15(3):773--785, 1979.

\bibitem{HelsonLowdenslager1958}
H.~Helson and D.~Lowdenslager.
\newblock Prediction theory and {F}ourier series in several variables.
\newblock {\em Acta Math.}, 99:165--202, 1958.

\bibitem{Paulsen-book}
V.~Paulsen.
\newblock {\em Completely bounded maps and operator algebras}, volume~78 of
  {\em Cambridge Studies in Advanced Mathematics}.
\newblock Cambridge University Press, Cambridge, 2002.

\bibitem{Pisier-book}
G.~Pisier.
\newblock {\em Introduction to operator space theory}, volume 294 of {\em
  London Mathematical Society Lecture Note Series}.
\newblock Cambridge University Press, Cambridge, 2003.

\bibitem{smith--ward1980}
R.~R. Smith and J.~D. Ward.
\newblock Matrix ranges for {H}ilbert space operators.
\newblock {\em Amer. J. Math.}, 102(6):1031--1081, 1980.

\bibitem{SzNagy--Foias-book}
B.~Sz.-Nagy, C.~Foias, H.~Bercovici, and L.~K\'erchy.
\newblock {\em Harmonic analysis of operators on {H}ilbert space}.
\newblock Universitext. Springer, New York, second edition, 2010.

\bibitem{TsoWu1999}
S.-H. Tso and P.~Y. Wu.
\newblock Matricial ranges of quadratic operators.
\newblock {\em Rocky Mountain J. Math.}, 29(3):1139--1152, 1999.

\bibitem{WienerAkutowicz1959}
N.~Wiener and E.~J. Akutowicz.
\newblock A factorization of positive {H}ermitian matrices.
\newblock {\em J. Math. Mech.}, 8:111--120, 1959.

\end{thebibliography}

\end{document}